\begin{document}
\newcommand {\emptycomment}[1]{} 

\newcommand{\nc}{\newcommand}
\newcommand{\delete}[1]{}
\nc{\mfootnote}[1]{\footnote{#1}} 
\nc{\todo}[1]{\tred{To do:} #1}

\nc{\mlabel}[1]{\label{#1}}  
\nc{\mcite}[1]{\cite{#1}}  
\nc{\mref}[1]{\ref{#1}}  
\nc{\meqref}[1]{\eqref{#1}} 
\nc{\mbibitem}[1]{\bibitem{#1}} 

\delete{
\nc{\mlabel}[1]{\label{#1}  
{\hfill \hspace{1cm}{\bf{{\ }\hfill(#1)}}}}
\nc{\mcite}[1]{\cite{#1}{{\bf{{\ }(#1)}}}}  
\nc{\mref}[1]{\ref{#1}{{\bf{{\ }(#1)}}}}  
\nc{\meqref}[1]{\eqref{#1}{{\bf{{\ }(#1)}}}} 
\nc{\mbibitem}[1]{\bibitem[\bf #1]{#1}} 
}

\newtheorem{thm}{Theorem}[section]
\newtheorem{lem}[thm]{Lemma}
\newtheorem{cor}[thm]{Corollary}
\newtheorem{pro}[thm]{Proposition}
\newtheorem{conj}[thm]{Conjecture}
\theoremstyle{definition}
\newtheorem{defi}[thm]{Definition}
\newtheorem{ex}[thm]{Example}
\newtheorem{rmk}[thm]{Remark}
\newtheorem{pdef}[thm]{Proposition-Definition}
\newtheorem{condition}[thm]{Condition}

\renewcommand{\labelenumi}{{\rm(\alph{enumi})}}
\renewcommand{\theenumi}{\alph{enumi}}

\nc{\tred}[1]{\textcolor{red}{#1}}
\nc{\tblue}[1]{\textcolor{blue}{#1}}
\nc{\tgreen}[1]{\textcolor{green}{#1}}
\nc{\tpurple}[1]{\textcolor{purple}{#1}}
\nc{\btred}[1]{\textcolor{red}{\bf #1}}
\nc{\btblue}[1]{\textcolor{blue}{\bf #1}}
\nc{\btgreen}[1]{\textcolor{green}{\bf #1}}
\nc{\btpurple}[1]{\textcolor{purple}{\bf #1}}

\nc{\rp}[1]{\textcolor{blue}{Ruipu:#1}}
\nc{\cm}[1]{\textcolor{red}{Chengming:#1}}
\nc{\li}[1]{\textcolor{blue}{#1}}
\nc{\lir}[1]{\textcolor{blue}{Li:#1}}


\nc{\twovec}[2]{\left(\begin{array}{c} #1 \\ #2\end{array} \right )}
\nc{\threevec}[3]{\left(\begin{array}{c} #1 \\ #2 \\ #3 \end{array}\right )}
\nc{\twomatrix}[4]{\left(\begin{array}{cc} #1 & #2\\ #3 & #4 \end{array} \right)}
\nc{\threematrix}[9]{{\left(\begin{matrix} #1 & #2 & #3\\ #4 & #5 & #6 \\ #7 & #8 & #9 \end{matrix} \right)}}
\nc{\twodet}[4]{\left|\begin{array}{cc} #1 & #2\\ #3 & #4 \end{array} \right|}

\nc{\rk}{\mathrm{r}}
\newcommand{\g}{\mathfrak g}
\newcommand{\h}{\mathfrak h}
\newcommand{\pf}{\noindent{$Proof$.}\ }
\newcommand{\frkg}{\mathfrak g}
\newcommand{\frkh}{\mathfrak h}
\newcommand{\Id}{\rm{Id}}
\newcommand{\gl}{\mathfrak {gl}}
\newcommand{\ad}{\mathrm{ad}}
\newcommand{\add}{\frka\frkd}
\newcommand{\frka}{\mathfrak a}
\newcommand{\frkb}{\mathfrak b}
\newcommand{\frkc}{\mathfrak c}
\newcommand{\frkd}{\mathfrak d}
\newcommand {\comment}[1]{{\marginpar{*}\scriptsize\textbf{Comments:} #1}}

\nc{\tforall}{\text{ for all }}

\nc{\svec}[2]{{\tiny\left(\begin{matrix}#1\\
#2\end{matrix}\right)\,}}  
\nc{\ssvec}[2]{{\tiny\left(\begin{matrix}#1\\
#2\end{matrix}\right)\,}} 

\nc{\typeI}{local cocycle $3$-Lie bialgebra\xspace}
\nc{\typeIs}{local cocycle $3$-Lie bialgebras\xspace}
\nc{\typeII}{double construction $3$-Lie bialgebra\xspace}
\nc{\typeIIs}{double construction $3$-Lie bialgebras\xspace}

\nc{\bia}{{$\mathcal{P}$-bimodule ${\bf k}$-algebra}\xspace}
\nc{\bias}{{$\mathcal{P}$-bimodule ${\bf k}$-algebras}\xspace}

\nc{\rmi}{{\mathrm{I}}}
\nc{\rmii}{{\mathrm{II}}}
\nc{\rmiii}{{\mathrm{III}}}
\nc{\pr}{{\mathrm{pr}}}
\newcommand{\huaA}{\mathcal{A}}

\nc{\mcdot}{{}}

\nc{\OT}{constant $\theta$-}
\nc{\T}{$\theta$-}
\nc{\IT}{inverse $\theta$-}


\nc{\asi}{ASI\xspace}
\nc{\dualp}{transposed Poisson\xspace}
\nc{\Dualp}{Transposed Poisson\xspace}
\nc{\dualpop}{{\bf TPois}\xspace}
\nc{\ldualp}{derivation-transposed Poisson\xspace}

\nc{\spdualp}{sp-dual Poisson \xspace} \nc{\aybe}{AYBE\xspace}

\nc{\admset}{\{\pm x\}\cup K^{\times} x^{-1}}

\nc{\dualrep}{gives a dual representation\xspace}
\nc{\admt}{admissible to\xspace}

\nc{\ciri}{\circ_{\rm I}}
\nc{\cirii}{\circ_{\rm II}}
\nc{\ciriii}{\circ_{\rm III}}

\nc{\opa}{\cdot_A}
\nc{\opb}{\cdot_B}

\nc{\post}{positive type\xspace}
\nc{\negt}{negative type\xspace}
\nc{\invt}{inverse type\xspace}

\nc{\pll}{\beta}
\nc{\plc}{\epsilon}

\nc{\ass}{{\mathit{Ass}}}
\nc{\comm}{{\mathit{Comm}}}
\nc{\dend}{{\mathit{Dend}}}
\nc{\zinb}{{\mathit{Zinb}}}
\nc{\tdend}{{\mathit{TDend}}}
\nc{\prelie}{{\mathit{preLie}}}
\nc{\postlie}{{\mathit{PostLie}}}
\nc{\quado}{{\mathit{Quad}}}
\nc{\octo}{{\mathit{Octo}}}
\nc{\ldend}{{\mathit{ldend}}}
\nc{\lquad}{{\mathit{LQuad}}}

 \nc{\adec}{\check{;}} \nc{\aop}{\alpha}
\nc{\dftimes}{\widetilde{\otimes}} \nc{\dfl}{\succ} \nc{\dfr}{\prec}
\nc{\dfc}{\circ} \nc{\dfb}{\bullet} \nc{\dft}{\star}
\nc{\dfcf}{{\mathbf k}} \nc{\apr}{\ast} \nc{\spr}{\cdot}
\nc{\twopr}{\circ} \nc{\tspr}{\star} \nc{\sempr}{\ast}
\nc{\disp}[1]{\displaystyle{#1}}
\nc{\bin}[2]{ (_{\stackrel{\scs{#1}}{\scs{#2}}})}  
\nc{\binc}[2]{ \left (\!\! \begin{array}{c} \scs{#1}\\
    \scs{#2} \end{array}\!\! \right )}  
\nc{\bincc}[2]{  \left ( {\scs{#1} \atop
    \vspace{-.5cm}\scs{#2}} \right )}  
\nc{\sarray}[2]{\begin{array}{c}#1 \vspace{.1cm}\\ \hline
    \vspace{-.35cm} \\ #2 \end{array}}
\nc{\bs}{\bar{S}} \nc{\dcup}{\stackrel{\bullet}{\cup}}
\nc{\dbigcup}{\stackrel{\bullet}{\bigcup}} \nc{\etree}{\big |}
\nc{\la}{\longrightarrow} \nc{\fe}{\'{e}} \nc{\rar}{\rightarrow}
\nc{\dar}{\downarrow} \nc{\dap}[1]{\downarrow
\rlap{$\scriptstyle{#1}$}} \nc{\uap}[1]{\uparrow
\rlap{$\scriptstyle{#1}$}} \nc{\defeq}{\stackrel{\rm def}{=}}
\nc{\dis}[1]{\displaystyle{#1}} \nc{\dotcup}{\,
\displaystyle{\bigcup^\bullet}\ } \nc{\sdotcup}{\tiny{
\displaystyle{\bigcup^\bullet}\ }} \nc{\hcm}{\ \hat{,}\ }
\nc{\hcirc}{\hat{\circ}} \nc{\hts}{\hat{\shpr}}
\nc{\lts}{\stackrel{\leftarrow}{\shpr}}
\nc{\rts}{\stackrel{\rightarrow}{\shpr}} \nc{\lleft}{[}
\nc{\lright}{]} \nc{\uni}[1]{\tilde{#1}} \nc{\wor}[1]{\check{#1}}
\nc{\free}[1]{\bar{#1}} \nc{\den}[1]{\check{#1}} \nc{\lrpa}{\wr}
\nc{\curlyl}{\left \{ \begin{array}{c} {} \\ {} \end{array}
    \right .  \!\!\!\!\!\!\!}
\nc{\curlyr}{ \!\!\!\!\!\!\!
    \left . \begin{array}{c} {} \\ {} \end{array}
    \right \} }
\nc{\leaf}{\ell}       
\nc{\longmid}{\left | \begin{array}{c} {} \\ {} \end{array}
    \right . \!\!\!\!\!\!\!}
\nc{\ot}{\otimes} \nc{\sot}{{\scriptstyle{\ot}}}
\nc{\otm}{\overline{\ot}}
\nc{\ora}[1]{\stackrel{#1}{\rar}}
\nc{\ola}[1]{\stackrel{#1}{\la}}
\nc{\pltree}{\calt^\pl}
\nc{\epltree}{\calt^{\pl,\NC}}
\nc{\rbpltree}{\calt^r}
\nc{\scs}[1]{\scriptstyle{#1}} \nc{\mrm}[1]{{\rm #1}}
\nc{\dirlim}{\displaystyle{\lim_{\longrightarrow}}\,}
\nc{\invlim}{\displaystyle{\lim_{\longleftarrow}}\,}
\nc{\mvp}{\vspace{0.5cm}} \nc{\svp}{\vspace{2cm}}
\nc{\vp}{\vspace{8cm}} \nc{\proofbegin}{\noindent{\bf Proof: }}
\nc{\proofend}{$\blacksquare$ \vspace{0.5cm}}
\nc{\freerbpl}{{F^{\mathrm RBPL}}}
\nc{\sha}{{\mbox{\cyr X}}}  
\nc{\ncsha}{{\mbox{\cyr X}^{\mathrm NC}}} \nc{\ncshao}{{\mbox{\cyr
X}^{\mathrm NC,\,0}}}
\nc{\shpr}{\diamond}    
\nc{\shprm}{\overline{\diamond}}    
\nc{\shpro}{\diamond^0}    
\nc{\shprr}{\diamond^r}     
\nc{\shpra}{\overline{\diamond}^r}
\nc{\shpru}{\check{\diamond}} \nc{\catpr}{\diamond_l}
\nc{\rcatpr}{\diamond_r} \nc{\lapr}{\diamond_a}
\nc{\sqcupm}{\ot}
\nc{\lepr}{\diamond_e} \nc{\vep}{\varepsilon} \nc{\labs}{\mid\!}
\nc{\rabs}{\!\mid} \nc{\hsha}{\widehat{\sha}}
\nc{\lsha}{\stackrel{\leftarrow}{\sha}}
\nc{\rsha}{\stackrel{\rightarrow}{\sha}} \nc{\lc}{\lfloor}
\nc{\rc}{\rfloor}
\nc{\tpr}{\sqcup}
\nc{\nctpr}{\vee}
\nc{\plpr}{\star}
\nc{\rbplpr}{\bar{\plpr}}
\nc{\sqmon}[1]{\langle #1\rangle}
\nc{\forest}{\calf}
\nc{\altx}{\Lambda_X} \nc{\vecT}{\vec{T}} \nc{\onetree}{\bullet}
\nc{\Ao}{\check{A}}
\nc{\seta}{\underline{\Ao}}
\nc{\deltaa}{\overline{\delta}}
\nc{\trho}{\tilde{\rho}}

\nc{\rpr}{\circ}
\nc{\dpr}{{\tiny\diamond}}
\nc{\rprpm}{{\rpr}}

\nc{\mmbox}[1]{\mbox{\ #1\ }} \nc{\ann}{\mrm{ann}}
\nc{\Aut}{\mrm{Aut}} \nc{\can}{\mrm{can}}
\nc{\twoalg}{{two-sided algebra}\xspace}
\nc{\colim}{\mrm{colim}}
\nc{\Cont}{\mrm{Cont}} \nc{\rchar}{\mrm{char}}
\nc{\cok}{\mrm{coker}} \nc{\dtf}{{R-{\rm tf}}} \nc{\dtor}{{R-{\rm
tor}}}
\renewcommand{\det}{\mrm{det}}
\nc{\depth}{{\mrm d}}
\nc{\Div}{{\mrm Div}} \nc{\End}{\mrm{End}} \nc{\Ext}{\mrm{Ext}}
\nc{\Fil}{\mrm{Fil}} \nc{\Frob}{\mrm{Frob}} \nc{\Gal}{\mrm{Gal}}
\nc{\GL}{\mrm{GL}} \nc{\Hom}{\mrm{Hom}} \nc{\hsr}{\mrm{H}}
\nc{\hpol}{\mrm{HP}} \nc{\id}{\mrm{id}} \nc{\im}{\mrm{im}}
\nc{\incl}{\mrm{incl}} \nc{\length}{\mrm{length}}
\nc{\LR}{\mrm{LR}} \nc{\mchar}{\rm char} \nc{\NC}{\mrm{NC}}
\nc{\mpart}{\mrm{part}} \nc{\pl}{\mrm{PL}}
\nc{\ql}{{\QQ_\ell}} \nc{\qp}{{\QQ_p}}
\nc{\rank}{\mrm{rank}} \nc{\rba}{\rm{RBA }} \nc{\rbas}{\rm{RBAs }}
\nc{\rbpl}{\mrm{RBPL}}
\nc{\rbw}{\rm{RBW }} \nc{\rbws}{\rm{RBWs }} \nc{\rcot}{\mrm{cot}}
\nc{\rest}{\rm{controlled}\xspace}
\nc{\rdef}{\mrm{def}} \nc{\rdiv}{{\rm div}} \nc{\rtf}{{\rm tf}}
\nc{\rtor}{{\rm tor}} \nc{\res}{\mrm{res}} \nc{\SL}{\mrm{SL}}
\nc{\Spec}{\mrm{Spec}} \nc{\tor}{\mrm{tor}} \nc{\Tr}{\mrm{Tr}}
\nc{\mtr}{\mrm{sk}}

\nc{\ab}{\mathbf{Ab}} \nc{\Alg}{\mathbf{Alg}}
\nc{\Algo}{\mathbf{Alg}^0} \nc{\Bax}{\mathbf{Bax}}
\nc{\Baxo}{\mathbf{Bax}^0} \nc{\RB}{\mathbf{RB}}
\nc{\RBo}{\mathbf{RB}^0} \nc{\BRB}{\mathbf{RB}}
\nc{\Dend}{\mathbf{DD}} \nc{\bfk}{{K}} \nc{\bfone}{{\bf 1}}
\nc{\base}[1]{{a_{#1}}} \nc{\detail}{\marginpar{\bf More detail}
    \noindent{\bf Need more detail!}
    \svp}
\nc{\Diff}{\mathbf{Diff}} \nc{\gap}{\marginpar{\bf
Incomplete}\noindent{\bf Incomplete!!}
    \svp}
\nc{\FMod}{\mathbf{FMod}} \nc{\mset}{\mathbf{MSet}}
\nc{\rb}{\mathrm{RB}} \nc{\Int}{\mathbf{Int}}
\nc{\Mon}{\mathbf{Mon}}
\nc{\remarks}{\noindent{\bf Remarks: }}
\nc{\OS}{\mathbf{OS}} 
\nc{\Rep}{\mathbf{Rep}}
\nc{\Rings}{\mathbf{Rings}} \nc{\Sets}{\mathbf{Sets}}
\nc{\DT}{\mathbf{DT}}

\nc{\BA}{{\mathbb A}} \nc{\CC}{{\mathbb C}} \nc{\DD}{{\mathbb D}}
\nc{\EE}{{\mathbb E}} \nc{\FF}{{\mathbb F}} \nc{\GG}{{\mathbb G}}
\nc{\HH}{{\mathbb H}} \nc{\LL}{{\mathbb L}} \nc{\NN}{{\mathbb N}}
\nc{\QQ}{{\mathbb Q}} \nc{\RR}{{\mathbb R}} \nc{\BS}{{\mathbb{S}}} \nc{\TT}{{\mathbb T}}
\nc{\VV}{{\mathbb V}} \nc{\ZZ}{{\mathbb Z}}


\nc{\calao}{{\mathcal A}} \nc{\cala}{{\mathcal A}}
\nc{\calc}{{\mathcal C}} \nc{\cald}{{\mathcal D}}
\nc{\cale}{{\mathcal E}} \nc{\calf}{{\mathcal F}}
\nc{\calfr}{{{\mathcal F}^{\,r}}} \nc{\calfo}{{\mathcal F}^0}
\nc{\calfro}{{\mathcal F}^{\,r,0}} \nc{\oF}{\overline{F}}
\nc{\calg}{{\mathcal G}} \nc{\calh}{{\mathcal H}}
\nc{\cali}{{\mathcal I}} \nc{\calj}{{\mathcal J}}
\nc{\call}{{\mathcal L}} \nc{\calm}{{\mathcal M}}
\nc{\caln}{{\mathcal N}} \nc{\calo}{{\mathcal O}}
\nc{\calp}{{\mathcal P}} \nc{\calq}{{\mathcal Q}} \nc{\calr}{{\mathcal R}}
\nc{\calt}{{\mathcal T}} \nc{\caltr}{{\mathcal T}^{\,r}}
\nc{\calu}{{\mathcal U}} \nc{\calv}{{\mathcal V}}
\nc{\calw}{{\mathcal W}} \nc{\calx}{{\mathcal X}}
\nc{\CA}{\mathcal{A}}

\nc{\fraka}{{\mathfrak a}} \nc{\frakB}{{\mathfrak B}}
\nc{\frakb}{{\mathfrak b}} \nc{\frakd}{{\mathfrak d}}
\nc{\oD}{\overline{D}}
\nc{\frakF}{{\mathfrak F}} \nc{\frakg}{{\mathfrak g}}
\nc{\frakm}{{\mathfrak m}} \nc{\frakM}{{\mathfrak M}}
\nc{\frakMo}{{\mathfrak M}^0} \nc{\frakp}{{\mathfrak p}}
\nc{\frakS}{{\mathfrak S}} \nc{\frakSo}{{\mathfrak S}^0}
\nc{\fraks}{{\mathfrak s}} \nc{\os}{\overline{\fraks}}
\nc{\frakT}{{\mathfrak T}}
\nc{\oT}{\overline{T}}
\nc{\frakX}{{\mathfrak X}} \nc{\frakXo}{{\mathfrak X}^0}
\nc{\frakx}{{\mathbf x}}
\nc{\frakTx}{\frakT}      
\nc{\frakTa}{\frakT^a}        
\nc{\frakTxo}{\frakTx^0}   
\nc{\caltao}{\calt^{a,0}}   
\nc{\ox}{\overline{\frakx}} \nc{\fraky}{{\mathfrak y}}
\nc{\frakz}{{\mathfrak z}} \nc{\oX}{\overline{X}}

\font\cyr=wncyr10

\nc{\al}{\alpha}
\nc{\lam}{\lambda}
\nc{\lr}{\longrightarrow}


\title[Transposed Poisson algebras]{Transposed Poisson algebras, Novikov-Poisson algebras and 3-Lie algebras}

\author{Chengming Bai}
\address{Chern Institute of Mathematics \& LPMC, Nankai University, Tianjin 300071, China}
         \email{baicm@nankai.edu.cn}

\author{Ruipu Bai}
\address{College of Mathematics and Information Science,
Hebei University, Baoding 071002, P.R. China}
\email{bairp1@yahoo.com.cn}

\author{Li Guo}
\address{Department of Mathematics and Computer Science, Rutgers University, Newark, NJ 07102, USA}
         \email{liguo@rutgers.edu}

\author{Yong Wu }
\address{Hebei College of Science and Technology, Baoding 071002, P.R. China}
\email{wuyg1022@sina.com}

\date{\today}

\begin{abstract}
We introduce a dual notion of the Poisson algebra by exchanging
the roles of the two binary operations in the Leibniz rule
defining the Poisson algebra. We show that the \dualp algebra thus
defined not only shares common properties of the Poisson algebra,
including the closure under taking tensor products and the Koszul
self-duality as an operad, but also admits a rich class of
identities. More significantly, a \dualp algebra naturally arises
from a Novikov-Poisson algebra by taking the commutator Lie
algebra of the Novikov algebra. Consequently, the classic
construction of a Poisson algebra from a commutative associative
algebra with a pair of commuting derivations has a similar
construction of a \dualp algebra when there is one derivation.
More broadly, the \dualp algebra also captures the algebraic
structures when the commutator is taken in pre-Lie Poisson
algebras and two other Poisson type algebras. Furthermore, the
transposed Poisson algebra improves two processes in~\mcite{Dz}
that produce 3-Lie algebras from Poisson algebras with a
strongness condition. When transposed Poisson algebras are used in
one process, the strongness condition is no longer needed and the
resulting 3-Lie algebra gives a transposed Poisson 3-Lie algebra.
In the other process, the resulting 3-Lie algebra is shown to again give a transposed Poisson 3-Lie
algebra.
\end{abstract}

\subjclass[2020]{
17B63,  
17A36,  
18M70,  
17D25,  
17A40,  
37J39,   
53D17  
}

\keywords{Lie algebra, Poisson algebra, \dualp algebra, pre-Lie algebra, Novikov algebra}

\maketitle


\tableofcontents

\allowdisplaybreaks

\section{Introduction}
This paper studies a variation of the well-known Poisson algebra in which the Leibniz rule of the Lie bracket on the commutative associative
multiplication is replaced by a rescaling of the Leibniz rule of
the commutative associative multiplication on the Lie bracket.

We first recall the definition of a Poisson algebra.

\begin{defi}\mlabel{de:Poi}
Let $L$ be a vector space equipped with two bilinear operations
$$
\cdot,\; [\;,\;] :L\otimes L\to L.$$
The triple $(L,\cdot,[\;,\;])$ is called a
\textbf{Poisson algebra} if $(L,\cdot)$ is a commutative associative algebra and
$(L,[\;,\;])$ is a Lie algebra that satisfy the compatibility condition
\begin{equation}\mlabel{eq:LR}
[x,y\cdot z]=[x,y]\cdot z+y\cdot [x,z],\quad \forall x,y,z\in L.
\end{equation}
\end{defi}
Eq.~(\mref{eq:LR}) is called the {\bf Leibniz
rule} since the adjoint operators of the Lie algebra are
derivations of the commutative associative algebra.

Poisson algebras arose from the study of Poisson
geometry~\mcite{Li77,Wei77} in the 1970s and has appeared in an extremely wide range of areas in
mathematics and physics, such as Poisson
manifolds~\mcite{BV1,L1,Vaisman1}, algebraic
geometry~\cite{BLLM,GK04,Pol97}, operads~\mcite{Fr,GR,MR},
quantization theory~\cite{Hue90,Kon03}, quantum
groups~\cite{CP1,Dr87}, and classical and quantum
mechanics~\cite{Arn78,Dirac64,OdA}.

The study of Poisson algebras also led to other algebraic
structures, such as noncommutative Poisson algebras~\mcite{XuP},
Jacobi algebras (also called generalized Poisson
algebras)~\mcite{AM,CK,Kir,Li77}, Gerstenhaber algebras and
Lie-Rinehart algebras~\mcite{Ger,KS,LV,Ri}. Especially interesting
is the notion of a Novikov-Poisson algebras~\cite{BCZ1,XuX}
arising from giving a tensor product structure to the important
notion of a Novikov algebra in connection with the Poisson
brackets of hydrodynamic type~\mcite{BN} and Hamiltonian operators
in the formal variational calculus~\mcite{GD}.

The Novikov-Poisson algebra~\mcite{XuX} is obtained from replacing the Lie
bracket in a Poisson algebra by the Novikov algebra product, and
replacing the Leibniz rule (\ref{eq:LR}) by certain compatibility
conditions. As a Lie-admissible algebra, taking commutators in a
Novikov algebra gives a Lie algebra. Thus it is natural to expect
that taking commutators in the Novikov product of a
Novikov-Poisson algebra gives a Poisson algebra. It came as a
surprise for us to discover that this is not the case, but rather
a ``dual" structure of the Poisson algebra in the sense that the
compatibility condition, or Leibniz rule in Eq~(\mref{eq:LR}) of a
Poisson algebra is modified by exchanging the roles of the two
binary operations (commutative associative and Lie operations).
Even more unexpected is the fact that the multiplication
operators of the commutative associative algebra are a rescaling
of derivations of the Lie algebra, rather than the derivations
themselves. This motivated us to introduce the following notion.

\begin{defi}
Let $L$ be a vector space equipped with two bilinear operations
$$
\cdot,\; [\;,\;] :L\otimes L\to L.$$
The triple $(L,\cdot,[\;,\;])$ is called a
\textbf{\ldualp algebra} or {\bf \dualp algebra} in short if $(L,\cdot)$ is a commutative associative algebra and
$(L,[\;,\;])$ is a Lie algebra that satisfy the following compatibility condition
\begin{equation}
2z\cdot [x,y]=[z\cdot x,y]+[x,z\cdot y],\;\;\forall x,y,z\in
L.\mlabel{eq:dualp}
\end{equation}
\end{defi}

Eq.~\meqref{eq:dualp} is called the {\bf transposed
Leibniz rule} because the roles played by the two binary operations in the Leibniz rule in a Poisson algebra are switched. Further, the resulting operation is rescaled by introducing a factor 2 on the left hand side.

In this paper we present three aspects of \dualp algebras that demonstrate that, despite the unusual appearance of the identity \meqref{eq:dualp}, the new structure has very good properties at the levels of algebras, operads and categories, and is the right algebraic structure to expect in connection with several important algebraic structures including Novikov-Poisson algebras, 3-Lie algebras, as well as Poisson algebras.

First in Section~\mref{sec:prop}, we consider basic properties,
examples and identities of \dualp algebras. The notion of a
Poisson algebra was motivated by the action of commuting
derivations from geometry and mechanics. The notion of a \dualp
algebra also has similar examples from classical analysis. In
particular, for a commutative associative algebra $(L,\cdot)$ with
a derivation $D$, define the Lie bracket by (see Proposition~\mref{pp:derdual})
\begin{equation}
 [x,y]=x\cdot D(y)-y\cdot D(x),\;\;\forall x,y\in L.
\end{equation}
Then $(L,\cdot,[\;,\;])$ is a \dualp algebra. Further, as one of
the important properties of Poisson algebras, there exists a natural Poisson algebra structure on the tensor product space of two
Poisson algebras which overcomes the lack of such a property for Lie algebras. The \dualp algebra also has this property
(Theorem~\mref{thm:tensor}). A remarkable property of the Poisson
algebra is the Koszul self-duality of its operad, which is shared by the \dualp algebra (Proposition~\mref{pp:dualop}). On the other
hand, while both the Poisson algebra and \dualp algebra are built from a commutative associative operation and a Lie operation, the
coupling relations of the two operations in the two algebras are orthogonal in a suitable sense (Proposition~\mref{pp:inter0}).
Furthermore, \dualp algebras possess an intriguing class of algebraic identities
(Theorem~\mref{thm:id}), some of which turn out to be critical for existing algebraic structures studied later in the paper.

The second important aspect of the \dualp algebra is its close relationship with several other algebraic structures akin to the Poisson algebra, as presented in Section~\mref{sec:novi}.
As noted above, taking the commutator in a Novikov-Poisson algebra gives rise to a \dualp algebra.
Another important Lie-admissible algebra, which is also more general than the Novikov algebra, is
the pre-Lie algebra, as the fundamental algebraic structure in many fields in
mathematics and mathematical physics, such as differential geometry, deformation theory, operads, vertex algebras, quantum field theory, symplectic structures and phase spaces, just to name a few (see \mcite{Bu} and the references therein).
From the study of Hopf algebras of rooted trees in quantum field
theory and control theory, as well as of
$F$-manifolds~\mcite{Foissy,Mansuy,Do}, the
pre-Lie commutative algebra was introduced, to consist of a
commutative associative operation and a pre-Lie operation whose
compatibility conditions are still given by the Leibniz rule, that
is, the left multiplication operators of the pre-Lie algebra are
derivations of the commutative associative algebra. In this sense,
the pre-Lie commutative algebra is very similar to the Poisson
algebra. Another related structure is the so-called differential
Novikov-Poisson algebras introduced in~\cite{BCZ1}.

To provide a uniform approach, we introduce the notion of a
pre-Lie Poisson algebra by replacing the Lie bracket in a Poisson
algebra with the pre-Lie product and replacing the Leibniz rule
(\ref{eq:LR}) in a Poisson algebra by compatibility conditions
similar to those of the Novikov-Poisson algebra. Novikov-Poisson
algebras and differential Novikov-Poisson algebras are pre-Lie
Poisson algebras, as are the pre-Lie commutative algebras under a
suitable condition. We show that taking the commutator in a
pre-Lie Poisson algebra gives rise to a \dualp algebra
(Proposition~\mref{pro:LPc}). Consequently, the same is true for a
pre-Lie commutative algebra with the above condition and for a
differential Novikov-Poisson algebra, as well as for
a Novikov-Poisson algebra.

The third aspect for the importance of the \dualp algebra is that it fits particularly nicely with various 3-Lie algebras or $n$-Lie algebras, as demonstrated in Section~\mref{sec:3lie}.
The notions of a 3-Lie algebra or more
generally an $n$-Lie algebra are the $n$-ary generalizations of a
Lie algebra~\mcite{Filippov} and have played vital roles in many
fields in mathematics and physics~\mcite{N,T}. Thus it is
important to construct 3-Lie algebras or more generally, $n$-Lie
algebras, from Lie algebras or from $n$-Lie algebras in smaller
arities. The Poisson or Poisson-like structures are instrumental
for such constructions.
In~\mcite{Dz}, for any $n$-Lie algebras
$n\geq 2$, the notion
of a Poisson $n$-Lie algebra (called $n$-Lie-Poisson there) was introduced, recovering  the
Poisson algebra when $n=2$ (also see \mcite{GM,MVV,Ro}). With an additional ``strongness" condition, a Poisson $n$-Lie algebra with a derivation can be used to construct an $(n+1)$-Lie
algebra.

It is fascinating to note that this strongness condition for $n=2$
is actually an identity holding
automatically for any \dualp algebra. In fact, the same
construction of a 3-Lie algebra can be provided by any \dualp
algebra with a derivation, {\em with no constraints}
(Theorem~\mref{thm:3Lie}). In addition, this resulted 3-Lie
algebra and the commutative associative algebra satisfy the
following ternary analog of the compatibility condition in
Eq.~\meqref{eq:dualp}:
$$
3u\cdot [x,y,z]=[x\cdot u,y,z]+[x,y\cdot u,z]+[x,y,z\cdot u],
\;\;\forall ~x,y,z,u\in L,
$$
which motivates us to introduce the notion of a \dualp 3-Lie
algebra. Such an ascending process is expected to continue for \dualp $n$-Lie algebras with any $n\geq 3$ and is formulated in
Conjecture~\mref{cj:dualp}. Incidently, by putting into such a framework, the dilation factor 2 in the transposed Leibniz
rule \meqref{eq:dualp} has a natural interpretation as
the arity 2 of the Lie bracket. By a similar
process, \dualp algebras with an involutive endomorphism also give rise to 3-Lie
algebras (Theorem~\mref{thm:const3}).

Finally, 3-Lie algebras can also be derived from strong Poisson
algebras without any additional structures~\mcite{Dz}. We show that, in the resulting 3-Lie algebras,
the compatibility condition of the 3-Lie product and the
commutative associative product of the Poisson algebra is the one for a \dualp 3-Lie algebra (Theorem~\mref{thm:Poi-dualp}). In fact, only under strict
constraints, can the compatibility condition also be the one for a Poisson 3-Lie algebra (Remark~\mref{rmk:trivial-3}).

\smallskip

\noindent
{\bf Notations. }
Throughout the paper, the base field $\bfk$ are assumed to be characteristic zero. To simplify notations, the commutative associative multiplication $\cdot$ will usually be suppressed unless emphasis is needed.

\section{Identities and properties of \dualp algebras }
\mlabel{sec:prop}

This section presents examples, identities and basic properties of \dualp algebras.

\subsection{Preliminary examples and classifications of \dualp algebras}
\mlabel{ss:ex}

We first give some preliminary examples of \dualp algebras and a
classification of complex \dualp algebras in dimension
2. More examples will be provided throughout the rest of the
paper.

A classical example of Poisson algebra from mechanics and
symplectic manifolds is the Hamilton algebra~(see \mcite{MPR})
$\mathbf{H}_n:=\bfk[x_1,\cdots,x_n,y_1,\cdots,y_n]$ with its
commutative associative product and the (Poisson) bracket $$[f,g]:=\sum_{i=1}^n \partial_{x_i}(f) \partial_{y_i}(g)-\partial_{y_i}(f)\partial_{x_i}(g), \quad \forall f, g\in \mathbf{H}_n,$$
where $\partial_{z}$ is the partial derivative with respect to the variable $z\in \{x_1,\cdots,x_n,y_1,\cdots,y_n\}$.

At a more abstract level, we have

\begin{ex}
Let $(L,\cdot)$ be a commutative associative algebra and $D_1,D_2$
be commuting derivations (that is, $D_1D_2=D_2D_1$). Then there is a Lie
algebra $(L,[\;,\;])$ defined by
\begin{equation}\mlabel{eq:2der}
[x,y]=D_1(x)\cdot D_2(y)-D_1(y)\cdot D_2(x),\;\;\forall x,y\in L.
\end{equation}
Moreover, $(L,\cdot,[\;,\;])$ is a Poisson algebra.
\mlabel{ex:Poi}
\end{ex}
See~\mcite{BLLM} for a recent application to model theory and differential-algebraic geometry.

When there is only one derivation, we have

\begin{pro}
Let $(L,\cdot)$ be a commutative associative algebra and let $D$ be a derivation. Define the multiplication
\begin{equation}
[x,y]:= x\cdot D(y)-D(x)\cdot y, \quad \forall x, y\in L.
\mlabel{eq:twoder}
\end{equation}
Then $(L,\cdot,[\;,\;])$ is a \dualp algebra.
\mlabel{pp:derdual}
\end{pro}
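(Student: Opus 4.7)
The plan is to verify directly that the bracket
\[
[x,y] := x\cdot D(y) - D(x)\cdot y
\]
is a Lie bracket on $(L,\cdot)$ and then check the transposed Leibniz rule \meqref{eq:dualp}. Since $(L,\cdot)$ is already a commutative associative algebra by hypothesis, these are the only things to establish.

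First I would dispense with antisymmetry: because $\cdot$ is commutative, $D(x)\cdot y = y\cdot D(x)$, so $[x,y] = x\cdot D(y) - y\cdot D(x) = -[y,x]$. Next, for the Jacobi identity, I would expand $[x,[y,z]]$ using the Leibniz property $D(y\cdot z) = D(y)\cdot z + y\cdot D(z)$. After expansion, the term $D(y)\cdot D(z) - D(z)\cdot D(y)$ in the middle vanishes by commutativity, leaving
\[
[x,[y,z]] = x\cdot y\cdot D^2(z) - x\cdot z\cdot D^2(y) - D(x)\cdot y\cdot D(z) + D(x)\cdot z\cdot D(y).
\]
Writing the analogous expressions for $[y,[z,x]]$ and $[z,[x,y]]$ and summing, the six $D^2$-terms cancel as three antisymmetric pairs (e.g. $x\cdot y\cdot D^2(z)$ against $-y\cdot x\cdot D^2(z)$), and the six first-order terms cancel in three pairs sharing the same undifferentiated factor; for instance the terms with undifferentiated $y$ are $-D(x)\cdot y\cdot D(z) + D(z)\cdot y\cdot D(x) = 0$ by commutativity. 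This yields the Jacobi identity.

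Finally, for the transposed Leibniz rule, I would expand both sides independently. The left-hand side is
\[
2z\cdot[x,y] = 2z\cdot x\cdot D(y) - 2z\cdot D(x)\cdot y,
\]
and for the right-hand side, expanding $[z\cdot x,y]$ and $[x,z\cdot y]$ via the derivation property gives
\[
[z\cdot x,y] + [x,z\cdot y] = z\cdot x\cdot D(y) - D(z)\cdot x\cdot y - z\cdot D(x)\cdot y + x\cdot D(z)\cdot y + x\cdot z\cdot D(y) - D(x)\cdot z\cdot y.
\]
The cross terms $-D(z)\cdot x\cdot y$ and $x\cdot D(z)\cdot y$ cancel by commutativity, and the two remaining $z\cdot x\cdot D(y)$ terms and the two $z\cdot D(x)\cdot y$ terms combine to match the left-hand side.

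The main obstacle is purely bookkeeping: the Jacobi identity produces twelve terms after full expansion, so the essential point is that commutativity of $\cdot$ collapses the mixed $D(\,\cdot\,)D(\,\cdot\,)$ contributions and pairs up the remaining terms with opposite signs. No deeper property of $D$ beyond being a derivation is used.
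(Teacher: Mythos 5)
Your proof is correct, but it takes a more elementary route than the paper's. The paper verifies only the transposed Leibniz rule \meqref{eq:dualp} directly (by a computation identical to your last display, expanding $[z\cdot x,y]+[x,z\cdot y]$ via the derivation property and cancelling the $D(z)$ cross terms), and explicitly defers the rest: the fact that $[\;,\;]$ is a Lie bracket, and indeed the whole proposition, is obtained as Corollary~\mref{co:dualpSG} of Theorem~\mref{thm:NPc}, using Lemma~\mref{lem:Xu} (the S.~Gelfand construction $x\circ y=x\cdot D(y)$ makes $(L,\cdot,\circ)$ a Novikov--Poisson algebra, whose commutator Lie algebra is exactly your bracket). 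You instead prove the Jacobi identity by brute force, and your bookkeeping checks out: $D([y,z])=y\cdot D^2(z)-z\cdot D^2(y)$ since the $D(y)\cdot D(z)$ terms cancel by commutativity, the six $D^2$ terms in the cyclic sum cancel in antisymmetric pairs, and the six first-order terms cancel in pairs grouped by the undifferentiated factor. What your approach buys is a self-contained argument using nothing beyond the derivation property and commutativity; what the paper's approach buys is conceptual placement --- the proposition becomes a special case of the general passage from Novikov--Poisson algebras to \dualp algebras, which is the main theme of Section~\mref{sec:novi}, and the Lie-admissibility of the Novikov product handles the Jacobi identity without computation.
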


We note that the defining identity~\meqref{eq:dualp} does not hold without the factor 2. See Section~\mref{sec:novi} for the perspective from Novikov-Poisson algebras.

\begin{proof}
All the needed identities can be verified directly, but we will
give the proposition as a consequence
(Corollary~\mref{co:dualpSG}) of the more general result in
Theorem~\mref{thm:NPc} on Novikov-Poisson algebras. For now, we
just verify Eq.~\meqref{eq:dualp} to illustrate that the factor 2
in the equation is indispensable. For all $x,y\in L$, we
have
\begin{eqnarray*}
[z\mcdot x,y]+[x,z\mcdot y]&=&
z\mcdot x \mcdot D(y)-D(z\mcdot x)\mcdot y + x\mcdot D(z\mcdot y)-D(x)\mcdot z\mcdot y\\
&=&
 z \mcdot x\mcdot D(y)-z\mcdot D(x)\mcdot y-D(z)\mcdot x\mcdot y+ x\mcdot z\mcdot D(y)+ x\mcdot D(z)\mcdot y -D(x)\mcdot z\mcdot y\\
&=& 2 z\mcdot (x\mcdot D(y)-D(x)\mcdot y)\\
&=& 2 z\mcdot [x,y]. \hspace{9.5cm} \qedhere
\end{eqnarray*}
\end{proof}

As a concrete example, for the usual derivations, we have

\begin{ex} The commutative associative algebra $L=\bfk[x_1,\cdots, x_n]$ together with the bracket
$$[g,h]:=\sum_{i=1}^n (g \partial_{x_i}(h)-h\partial_{x_i}(g)), \quad \forall g, h\in L,$$
is a \dualp algebra. More generally, for
$D=\sum_{i=1}^nf_i\partial_{x_i}$ with $f_i\in L, 1\leq i\leq n$, the bracket $$[g,h]:=\sum_{i=1}^n(f_i(g\partial_{x_i}(h)-h\partial_{x_i}(g)),\;\;\forall
g,h\in L,$$
also gives a \dualp algebra.
\end{ex}

As special cases of \dualp algebras, by inspection, we have

\begin{pro}\label{pro:trivial}
For a linear space $L$ with a commutative associative
multiplication $\cdot$ and a Lie bracket $[\;,\;]$.
\begin{enumerate}
\item
If either $\cdot$ or $[\;,\;]$
is zero, then $(L,\cdot,[\;,\;])$ is a \dualp algebra, as well as a Poisson algebra.
\mlabel{it:trivial1}
\item
More generally, if
$$ x\cdot [y,z]=[x\cdot y,z]=0, \quad \forall x,y,z\in L,$$
then $(L,\cdot,[\;,\;])$ is a \dualp algebra, as well as a Poison algebra.
\mlabel{it:trivial2}
\end{enumerate}
\end{pro}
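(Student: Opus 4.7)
The plan is essentially bookkeeping, since the proposition only asks for the trivial verification flagged by the phrase ``by inspection''. I would first observe that part (a) is subsumed by part (b): if $\cdot=0$, then clearly $x\cdot[y,z]=0$ and $[x\cdot y,z]=[0,z]=0$; if $[\;,\;]=0$, then $x\cdot[y,z]=0$ and $[x\cdot y,z]=0$ as well. So the task reduces to verifying (b).

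For (b), under the hypothesis $x\cdot[y,z]=0$ and $[x\cdot y,z]=0$ for all $x,y,z\in L$, I would check that both sides of the Leibniz rule \meqref{eq:LR} and of the transposed Leibniz rule \meqref{eq:dualp} vanish identically. For \meqref{eq:LR}: the left side equals $-[y\cdot z,x]$ by antisymmetry and hence is zero; each summand on the right has the form $u\cdot[v,w]$ (after invoking commutativity of $\cdot$) and hence vanishes. For \meqref{eq:dualp}: the left side $2z\cdot[x,y]$ vanishes outright by hypothesis, while on the right $[z\cdot x,y]$ is already of the form $[u\cdot v,w]$, and $[x,z\cdot y]=-[z\cdot y,x]$ likewise falls under the hypothesis after one application of antisymmetry.

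The main (and only) obstacle is purely notational: one must remember to apply commutativity of $\cdot$ and antisymmetry of $[\;,\;]$ so that every term is brought into one of the two canonical forms $u\cdot[v,w]$ or $[u\cdot v,w]$ assumed to vanish in (b). Since each of the six relevant summands succumbs to a single such rewrite, the verification is immediate and no deeper structural argument is needed.
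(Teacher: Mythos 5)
Your verification is correct and matches the paper's intent: the paper offers no written proof, merely flagging the result as immediate ``by inspection,'' and your reduction of (a) to (b) followed by the term-by-term check that every summand in \meqref{eq:LR} and \meqref{eq:dualp} rewrites (via commutativity of $\cdot$ or antisymmetry of $[\;,\;]$) into one of the two vanishing forms $u\cdot[v,w]$ or $[u\cdot v,w]$ is exactly the intended argument. No gaps.
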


As we will see in Proposition~\mref{pp:inter0}, the converse of Proposition~\mref{pro:trivial}.\meqref{it:trivial2} holds.
\smallskip

We next classify the \dualp algebra structures on two-dimensional complex Lie algebras.

Let $L$ be a complex vector space with a basis $\{e_1,e_2\}$. If
$(L,[\;,\;])$ is an abelian Lie algebra, then by
Proposition~\ref{pro:trivial}.\meqref{it:trivial1}, for any
2-dimensional commutative associative algebra $(L,\cdot)$,
$(L,\cdot, [\;,\;])$ is a \dualp algebra. Hence any 2-dimensional complex \dualp algebra in which the Lie algebra
is abelian is isomorphic to one of the following \dualp algebras
(we only give non-zero product):
\begin{enumerate}
 \item $(L,\cdot): e_i\mcdot e_j=0, i,j=1,2;\;\;(L,[\;,\;]): [e_1,e_2]=0$;
\mlabel{it:2dim11} \item $(L,\cdot): e_1\mcdot e_1=e_1,e_2\mcdot
e_2=e_2,\;\;(L,[\;,\;]): [e_1,e_2]=0$; \mlabel{it:2dim22} \item
$(L,\cdot): e_1\mcdot e_1=e_1,e_1\mcdot e_2=e_2\mcdot
e_1=e_2,\;\;(L,[\;,\;]): [e_1,e_2]=0$; \mlabel{it:2dim32} \item
$(L,\cdot): e_1\mcdot e_1=e_1,\;\;(L,[\;,\;]): [e_1,e_2]=0$;
\mlabel{it:2dim42} \item $(L,\cdot): e_1\mcdot e_1=
e_2,\;\;(L,[\;,\;]): [e_1,e_2]=0$. \mlabel{it:2dim43}
\end{enumerate}
These \dualp algebras are mutually non-isomorphic.

On the other
hand, it is well-known that any 2-dimensional
non-abelian complex Lie algebra is isomorphic to the one whose product is given by
$$[e_1,e_2]=e_2.$$
By a straightforward computation, we find that a commutative (not
necessarily associative) algebra $(L,\cdot)$ satisfying
Eq.~(\mref{eq:dualp}) if and only if its product is given by
$$e_1\mcdot e_1=ae_1+be_2,\;e_1\mcdot e_2=e_2\mcdot e_1=ce_1+ae_2,\;e_2\mcdot e_2=ce_2,$$
for some $a, b, c\in \CC.$ Moreover, $(L,\cdot)$ is associative if
and only if $ac=0.$ Therefore, any 2-dimensional complex \dualp
algebra in which the Lie algebra is non-abelian is isomorphic to
one of the following \dualp algebras (we only give the
non-zero product):

\begin{enumerate}
 \item $(L,\cdot): e_i\mcdot e_j=0, i,j=1,2;\;\;(L,[\;,\;]): [e_1,e_2]=e_2$;
\mlabel{it:2dim1} \item $(L,\cdot): e_1\mcdot
e_1=e_2,\;\;(L,[\;,\;]): [e_1,e_2]=e_2$; \mlabel{it:2dim2} \item
$(L,\cdot): e_1\mcdot e_2=e_2\mcdot e_1=e_1,e_2\mcdot
e_2=e_2,\;\;(L,[\;,\;]): [e_1,e_2]=e_2$; \mlabel{it:2dim3} \item
$(L,\cdot): e_1\mcdot e_1=\lambda e_1, e_1\mcdot e_2=e_2\mcdot
e_1=\lambda e_2, \lambda\ne 0,\;\;(L,[\;,\;]): [e_1,e_2]=e_2$.
\mlabel{it:2dim4}
\end{enumerate}
\noindent All parameters appearing above are in $\mathbb C$. Furthermore, these \dualp algebras are mutually non-isomorphic.

\subsection{Identities in \dualp algebras}
\mlabel{ss:id}

There is a rich family of identities for \dualp
algebras which will be used later in the paper, but are also
interesting in their own right.

\begin{thm} Let $(L,\cdot, [\;,\;])$ be a \dualp
algebra. Then the following identities hold.
\begin{eqnarray}
&&x\mcdot[y,z]+y\mcdot[z,x]+z\mcdot[x,y]=0,\mlabel{eq:gi1}\\
&&[h\mcdot[x,y],z]+[h\mcdot[y,z],x]+[h\mcdot [z,x],y]=0,\mlabel{eq:gi2}\\
&&[h\mcdot x,[y,z]]+[h\mcdot y,[z,x]]+[h\mcdot z,[x,y]]=0,\mlabel{eq:gi3}\\
&&[h,x]\mcdot [y,z]+[h,y]\mcdot [z,x]+[h,z]\mcdot [x,y]=0,\mlabel{eq:gi4}\\
&&[x\mcdot u,y\mcdot v]+[x\mcdot v,y\mcdot u]=2u\mcdot v\mcdot
[x,y],\mlabel{eq:gi5}\\
&& x[u,yv]+v[xy,u]+yu[v,x]=0, \mlabel{eq:gi6}
\end{eqnarray}
for all $x,y,z,h,u,v\in L$.
\mlabel{thm:id}
\end{thm}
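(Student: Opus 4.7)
The plan is to establish the six identities using only the transposed Leibniz rule \eqref{eq:dualp}, the commutativity/associativity of $\cdot$, the antisymmetry of $[\,,\,]$, and the Jacobi identity, but in an order chosen so that each identity can build on its predecessors. The natural order is \eqref{eq:gi1}, then \eqref{eq:gi4}, then \eqref{eq:gi5}, then \eqref{eq:gi2} and \eqref{eq:gi3}, then \eqref{eq:gi6}.

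For \eqref{eq:gi1}, I would apply $2a[b,c]=[ab,c]+[b,ac]$ three times with $(a,b,c)$ ranging over the cyclic rotations of $(x,y,z)$ and sum. Using commutativity of $\cdot$ (so $xy=yx$, etc.) and antisymmetry of $[\,,\,]$, the six bracket terms on the right cancel in pairs, giving $2(x[y,z]+y[z,x]+z[x,y])=0$. For \eqref{eq:gi4}, I would apply \eqref{eq:gi1} to the triple $([h,x],y,z)$ to get $[h,x]\cdot[y,z]=y[[h,x],z]-z[[h,x],y]$, then sum cyclically in $(x,y,z)$. Collecting coefficients of $x,y,z$ and using Jacobi in the form $[[h,z],y]-[[h,y],z]=[[y,z],h]$ (and rotations) converts the cyclic sum into $x[[y,z],h]+y[[z,x],h]+z[[x,y],h]$. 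Applying \eqref{eq:gi1} once more to the triples $(x,[y,z],h)$, etc., the resulting $h\cdot\sum[x,[y,z]]$ contribution vanishes by Jacobi, and what survives is $-([y,z][h,x]+[z,x][h,y]+[x,y][h,z])$, which by commutativity equals the negative of the quantity we started with, forcing it to be zero.

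For \eqref{eq:gi5}, I would rely solely on the transposed Leibniz rule. Expand $[uvx,y]$ in two ways, via $2u[vx,y]=[uvx,y]+[vx,uy]$ and $2v[ux,y]=[uvx,y]+[ux,vy]$, and analogously expand $[x,uvy]$ in two ways. Summing the four expansions and invoking the binary identities $[vx,y]+[x,vy]=2v[x,y]$ and $[ux,y]+[x,uy]=2u[x,y]$ yields
\begin{equation*}
2\bigl([uvx,y]+[x,uvy]\bigr)=8uv[x,y]-2\bigl([xu,yv]+[xv,yu]\bigr).
\end{equation*}
But $[uvx,y]+[x,uvy]=2uv[x,y]$ is itself one application of transposed Leibniz with $a=uv$. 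Equating gives \eqref{eq:gi5}.

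With \eqref{eq:gi1} and \eqref{eq:gi4} in hand, \eqref{eq:gi2} follows quickly: use transposed Leibniz with $a=[x,y]$, $b=h$, $c=z$ to rewrite $[h[x,y],z]=2[x,y][h,z]-[h,[x,y]z]$ and sum cyclically. The cyclic sum $\sum[x,y][h,z]$ vanishes by \eqref{eq:gi4}, while $\sum[h,[x,y]z]=[h,\sum[x,y]z]=[h,0]=0$ by \eqref{eq:gi1}. The twin identity \eqref{eq:gi3} comes from the dual expansion: transposed Leibniz with $a=x$, $b=h$, $c=[y,z]$ gives $[hx,[y,z]]=2x[h,[y,z]]-[h,x[y,z]]$, whose cyclic sum reduces (via \eqref{eq:gi1} applied inside the bracket on $h$) to $2\sum x[h,[y,z]]$; a second use of \eqref{eq:gi1} on each triple $(x,h,[y,z])$, together with Jacobi and \eqref{eq:gi4}, shows the latter sum is zero. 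For \eqref{eq:gi6}, apply transposed Leibniz to each of $x[u,yv]$, $v[xy,u]$, $yu[v,x]$; after the antisymmetric pair $[u,xyv]+[xyv,u]$ cancels, twice the left-hand side becomes $[xu,yv]+[xy,uv]-[xyu,v]-[x,uvy]$. The last two terms sum, by transposed Leibniz with $a=uy$, to $-2uy[x,v]$, while the first two sum to $+2uy[x,v]$ by \eqref{eq:gi5} under the relabeling $(x,y,u,v)\mapsto(x,v,u,y)$, so the total vanishes.

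The main obstacle is \eqref{eq:gi4} and the analogous manoeuvres in \eqref{eq:gi2}, \eqref{eq:gi3}, and \eqref{eq:gi6}: straightforward applications of the transposed Leibniz rule to the target expressions tend to produce tautologies. The resolution is to use \eqref{eq:gi1} to move brackets across products and then use Jacobi to collapse nested brackets, treating \eqref{eq:gi1} and \eqref{eq:gi4} as structural rewrites that feed into the later identities. An additional subtlety for \eqref{eq:gi6} is recognising that \eqref{eq:gi5} encodes several apparently different identities obtained by permuting its four arguments.
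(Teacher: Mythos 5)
Your proposal is correct, and for identities \eqref{eq:gi1}, \eqref{eq:gi5} and \eqref{eq:gi6} it coincides with the paper's argument (cyclic summation of the transposed Leibniz rule; a four-fold expansion reconciled with the single expansion of $[uvx,y]+[x,uvy]$; expansion plus an application of \eqref{eq:gi5} after the antisymmetric pair cancels). Where you genuinely diverge is in the block \eqref{eq:gi2}--\eqref{eq:gi4}: the paper proves \eqref{eq:gi2} first, by a comparatively heavy argument that produces two intermediate cyclic identities (one from applying transposed Leibniz to $[[x,y]\mcdot h,z]$ and Jacobi, one from applying Jacobi to $[[x,y],z\mcdot h]$ and then transposed Leibniz) and subtracts them; \eqref{eq:gi3} and \eqref{eq:gi4} then fall out as corollaries of \eqref{eq:gi2}. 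You instead prove \eqref{eq:gi4} first, directly from \eqref{eq:gi1} and Jacobi via the reflection trick $S=-S$ (valid since $\mathrm{char}\,\bfk=0$), and then \eqref{eq:gi2} and \eqref{eq:gi3} each collapse to one line: the cyclic sum of $2[x,y]\mcdot[h,z]$ dies by \eqref{eq:gi4} and the term $[h,\textstyle\sum x\mcdot[y,z]]$ dies by \eqref{eq:gi1}. I checked the coefficient bookkeeping in your \eqref{eq:gi4} derivation (the Jacobi rewriting $[[h,z],y]-[[h,y],z]=[[y,z],h]$ and its rotations) and it is sound. Your route is more modular and shorter overall; the paper's route has the minor advantage that its intermediate equation (\mref{eq:gin1}) is reused to get \eqref{eq:gi3} without a further computation, but nothing essential is lost either way.
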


\begin{proof} Let $x,y,z,h,u,v\in L$. By Eq.~(\mref{eq:dualp}), we have
$$[x\mcdot z,y]+[x,y\mcdot z]=2z\mcdot [x,y],$$
$$[y\mcdot x,z]+[y,z\mcdot x]=2x\mcdot [y,z],$$
$$[z\mcdot y,x]+[z,x\mcdot y]=2y\mcdot [z,x].$$
Taking the sum of the three identities above gives Eq.~(\mref{eq:gi1}).

The proof of Eq.~\meqref{eq:gi2} takes more efforts. First by Eq.~(\mref{eq:dualp}), we have
$$[[x,y]\mcdot h,z]+[[x,y],z\mcdot h]=2h\mcdot [[x,y],z],$$
$$[[y,z]\mcdot h,x]+[[y,z],x\mcdot h]=2h\mcdot [[y,z],x],$$
$$[[z,x]\mcdot h,y]+[[z,x],y\mcdot h]=2h\mcdot [[z,x],y].$$
Taking the sum of the three identities above  and applying the Jacobi identity, we obtain
\begin{equation}
[[x,y]\mcdot h,z]+[[y,z]\mcdot h,x]+[[z,x]\mcdot h,y])+([[x,y],z\mcdot
h]+[[y,z],x\mcdot h]+[[z,x],y\mcdot h])=0.\mlabel{eq:gin1}
\end{equation}
Next applying the Jacobi identity to $[[x,y],z\mcdot h]$ gives
$$[[x,y],z\mcdot h]+[[y,z\mcdot h],x]+[[z\mcdot h,x],y]=0$$
and applying Eq.~(\mref{eq:dualp}) to $[[y,z\mcdot h],x]$ gives
$$[[y,z\mcdot h],x]=2[h\mcdot [y,z],x]-[[y\mcdot h,z],x].$$
Thus we obtain
$$[[x,y],z\mcdot h]+2[h\mcdot [y,z],x]-[[y\mcdot h,z],x]+[[z\mcdot h,x],y]=0.$$
Similarly, we have
$$[[y,z],x\mcdot h]+2[h\mcdot [z,x],y]-[[z\mcdot h,x],y]+[[x\mcdot h,y],z]=0,$$
$$[[z,x],y\mcdot h]+2[h\mcdot [x,y],z]-[[x\mcdot h,y],z]+[[y\mcdot h,z],x]=0.$$
Taking the sum of the three identities above yields
\begin{equation}[[x,y],z\mcdot h]+[[y,z],x\mcdot h]+[[z,x],y\mcdot h])+2([h\mcdot [x,y],z]+[h\mcdot [y,z],x]+[h\mcdot [z,x],y])=0.
\mlabel{eq:gin2}\end{equation}
Then Eq.~(\mref{eq:gi2}) follows from
taking the difference between Eq.~(\mref{eq:gin2}) and
Eq.~(\mref{eq:gin1}).

Furthermore, Eq.~(\mref{eq:gi3}) follows by
substituting Eq.~(\mref{eq:gi2}) into Eq.~(\mref{eq:gin1}).

By Eq.~(\mref{eq:dualp}), we have
$$[h\mcdot [x,y],z]+[h,z\mcdot [x,y]]=2[x,y]\mcdot [h,z],$$
$$[h\mcdot [y,z],x]+[h,x\mcdot [y,z]]=2[y,z]\mcdot [h,x],$$
$$[h\mcdot [z,x],y]+[h,y\mcdot [z,x]]=2[z,x]\mcdot [h,y].$$
Taking the sum of the above three identities and applying
Eq.~(\mref{eq:gi2}), we obtain Eq.~(\mref{eq:gi4}).

By Eq.~(\mref{eq:dualp}), we get
$$[x\mcdot u,y\mcdot v]+[x,y\mcdot v\mcdot u]=2u\mcdot [x,y\mcdot
v],$$
$$[x\mcdot v,y\mcdot u]+[x\mcdot v\mcdot u,y]=2u\mcdot [x\mcdot v,y].$$
Taking the sum of the above two identities gives
$$([x\mcdot u,y\mcdot v]+[x\mcdot v,y\mcdot u])+([x,y\mcdot v\mcdot u]+[x\mcdot v\mcdot u,y])=2u\mcdot ([x,y\mcdot v]+[x\mcdot v,y]).$$
Since
$$[x,y\mcdot v\mcdot u]+[x\mcdot v\mcdot u,y]=2v\mcdot u\mcdot [x,y],$$
$$2u\mcdot ([x,y\mcdot v]+[x\mcdot v,y])=4u\mcdot v\mcdot [x,y],$$
Eq.~(\mref{eq:gi5}) follows.

Finally by Eqs.~\eqref{eq:dualp} and \eqref{eq:gi5}, we have Eq.~\eqref{eq:gi6}:
$$2x[u,yv]+2v[xy,u]+2yu[v,x]= [xu,yv]+[u,xyv]+[xyv,u]+[xy,uv]+2uv[x,y]
=0. \qedhere
$$
\end{proof}

\subsection{Related structures and tensor products}

The following result shows that \dualp algebra structures on a space is closed under the commutative associative product.
\begin{pro}
Let $(L,\cdot,[\;,\;])$ be a \dualp algebra. For any $h\in L$, define a new bilinear operation $[\;,\;]_h$ on $L$ by
\begin{equation} [x,y]_h=h\mcdot [x,y], \;\;\forall x,y\in L.\end{equation}
Then $(L,\cdot,[\;,\;]_h)$ is a \dualp algebra.
\end{pro}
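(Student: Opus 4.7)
The plan is to verify the two defining axioms of a \dualp algebra for $(L,\cdot,[\;,\;]_h)$: namely, that $(L,[\;,\;]_h)$ is a Lie algebra, and that $\cdot$ together with $[\;,\;]_h$ satisfies the transposed Leibniz rule \meqref{eq:dualp}. Since the commutative associative operation is unchanged, commutativity and associativity of $\cdot$ are inherited from the original structure and require no separate check.

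I would dispatch the easy pieces first. Antisymmetry of $[\;,\;]_h$ follows immediately from antisymmetry of $[\;,\;]$ together with $\bfk$-linearity of multiplication by $h$. The transposed Leibniz rule for $[\;,\;]_h$ is almost as short: both sides of the desired identity differ from their counterparts for the original bracket only by a factor of $h\mcdot$, so
\[ [z\mcdot x,y]_h + [x, z\mcdot y]_h = h\mcdot\bigl([z\mcdot x,y] + [x, z\mcdot y]\bigr) = h\mcdot 2z\mcdot [x,y] = 2z\mcdot [x,y]_h, \]
by applying \meqref{eq:dualp} for $(L,\cdot,[\;,\;])$ in the middle step.

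The main obstacle is the Jacobi identity for $[\;,\;]_h$, which after unwinding the definition reads
\[ h\mcdot\bigl([h\mcdot[x,y],z] + [h\mcdot[y,z],x] + [h\mcdot[z,x],y]\bigr) = 0. \]
The key observation is that the expression inside the parentheses is exactly the left-hand side of identity \meqref{eq:gi2} in Theorem~\mref{thm:id}, and hence vanishes in any \dualp algebra. With Jacobi in hand, all axioms are confirmed. This is a nice illustration of how the auxiliary identities of Section~\mref{ss:id}—which look somewhat mysterious in isolation—are precisely what is needed for closure properties such as this one; a direct attack from \meqref{eq:dualp} alone would be considerably more laborious.
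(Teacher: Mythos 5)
Your proposal is correct and follows essentially the same route as the paper's own proof: verifying antisymmetry and the transposed Leibniz rule directly by factoring out $h$, and reducing the Jacobi identity for $[\;,\;]_h$ to identity~\meqref{eq:gi2} of Theorem~\mref{thm:id}. Nothing further is needed.
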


\begin{proof} It is obvious that the operation $[\;,\;]_h$ is skew-symmetric.
By Eq.~(\mref{eq:gi2}),  we have
\begin{eqnarray*}
 [[x,y]_h,z]_h+[[y,z]_h,x]_h+[[z,x]_h,y]_h
=h\mcdot ([h\mcdot [x,y],z]+[h\mcdot [y,z],x]+[h\mcdot
[z,x],y])=0,\end{eqnarray*} for all $x,y,z\in L$. Then
$(L,[\;,\;]_h)$ is a Lie algebra.  By Eq.~(\mref{eq:dualp}), we
have
$$[x\mcdot z,y]_h+[x,y\mcdot z]_h=h\mcdot [x\mcdot z,y]+h\mcdot [x,y\mcdot z]=2h\mcdot z\mcdot [x,y]=2z\mcdot [x,y]_h,$$
for all $x,y,z\in L$. Hence  $(L, \mcdot, [\;,\;]_h)$ is a \dualp algebra.
\end{proof}

An important property of Poisson algebras is that they are
closed under taking tensor products (see~\mcite{XuX}.)

\begin{pro}
Let  $(L_1,\cdot_1,[\;,\;]_1)$ and $(L_2,\cdot_2,[\;,\;]_2)$ be two
Poisson algebras.  Define two operations $\cdot$ and $[\;,\;]$ on $L_1\otimes L_2$ by
\begin{equation}
(x_1\otimes x_2)\cdot (y_1\otimes y_2)=x_1\cdot_1 y_1\otimes x_2\cdot_2
y_2,\mlabel{eq:tensor1}\end{equation}
\begin{equation}
[x_1\otimes x_2,y_1\otimes y_2]=[x_1,y_1]_1\otimes x_2\cdot_2
y_2+x_1\cdot_1 y_1\otimes [x_2,y_2]_2,\mlabel{eq:tensor2}\end{equation}
for all $x_1,y_1\in L_1, x_2,y_2\in L_2$.  Then $(L_1\otimes L_2,\cdot, [\;,\;])$ is a Poisson algebra.
\end{pro}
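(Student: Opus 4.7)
The plan is to verify, in order, that $(L_1\otimes L_2, \cdot)$ is a commutative associative algebra, that $(L_1\otimes L_2, [\;,\;])$ is a Lie algebra, and that these two operations satisfy the Leibniz rule~\meqref{eq:LR}. Throughout, it suffices to check each identity on pure tensors and extend by bilinearity.

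The commutative associative structure is immediate, since on pure tensors both commutativity and associativity of $\cdot$ factor through the corresponding properties of $\cdot_1$ and $\cdot_2$. Skew-symmetry of $[\;,\;]$ on pure tensors follows from skew-symmetry of each $[\;,\;]_i$ together with commutativity of each $\cdot_i$, which together exchange the two summands in~\meqref{eq:tensor2} under $x_1\otimes x_2 \leftrightarrow y_1\otimes y_2$ with an overall sign change.

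The main obstacle will be the Jacobi identity. For three pure tensors $a_i = x_i\otimes y_i$, $i=1,2,3$, expanding $[[a_1,a_2],a_3]$ by~\meqref{eq:tensor2} twice yields four summands, and cycling over the three cyclic permutations produces twelve summands in total. I would group these into two families. Family (i) consists of the summands in which the two brackets land in the same tensor factor; after using commutativity and associativity of $\cdot_2$ (respectively $\cdot_1$) to equate the scalar factors, these collapse to the Jacobi expression of $L_1$ tensored with $y_1\cdot_2 y_2\cdot_2 y_3$, plus the symmetric expression with the roles of $L_1$ and $L_2$ reversed, both of which vanish by the Jacobi identity in the respective factor. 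Family (ii) consists of the six remaining mixed summands, each of the form $[x_i\cdot_1 x_j, x_k]_1 \otimes [y_i, y_j]_2 \cdot_2 y_k$ or $[x_i, x_j]_1 \cdot_1 x_k \otimes [y_i\cdot_2 y_j, y_k]_2$. Here I would apply the Leibniz rule~\meqref{eq:LR} in $L_1$ (respectively $L_2$) to rewrite each $[x_i\cdot_1 x_j, x_k]_1$ (respectively $[y_i\cdot_2 y_j, y_k]_2$) as a sum of two derivation-type terms, and then use skew-symmetry to match these twelve contributions in canceling pairs. This bookkeeping of signs and cyclic indices is the only genuinely tedious step.

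The Leibniz rule for the tensor product is a short final check: expanding $[a_1, a_2\cdot a_3]$ via~\meqref{eq:tensor1} and~\meqref{eq:tensor2} yields two summands, one with a bracket in $L_1$ and one with a bracket in $L_2$. Applying the Leibniz rule in the corresponding factor to each, and using commutativity and associativity of $\cdot_1$ and $\cdot_2$ to rearrange the remaining scalar entries, produces exactly $[a_1,a_2]\cdot a_3 + a_2\cdot [a_1,a_3]$, as required.
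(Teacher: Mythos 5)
Your proposal is correct, and I have checked that the pairwise cancellation you defer as ``bookkeeping'' does go through: after applying the Leibniz rule to the six mixed Jacobi terms, the resulting twelve summands cancel in pairs using skew-symmetry of the brackets and commutativity of the products. Note, however, that the paper itself gives no proof of this proposition at all --- it is stated as a known result with a citation to Xu's paper on Novikov--Poisson algebras --- so there is no in-text argument to compare against directly. The closest internal analogue is the paper's proof of Theorem~\mref{thm:tensor}, the transposed Poisson version of the same statement, and your decomposition parallels it closely: there too the Jacobi check splits into the ``pure'' terms (both brackets in one factor), which vanish by the Jacobi identity in that factor, and the ``mixed'' terms, which are eliminated using the compatibility relation. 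The difference is that in the transposed case the mixed terms do not cancel in pairs after a single application of the defining identity; the paper must first invoke the derived identity~\meqref{eq:gi1} to reshuffle them before Eq.~\meqref{eq:dualp} finishes the job, whereas in the ordinary Poisson case the Leibniz rule alone suffices, exactly as you describe. So your argument is the standard and correct one for this proposition, and it is slightly simpler than what the transposed analogue requires.
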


This property is also shared by \dualp algebras.
\begin{thm}\label{thm:tensor}
Let  $(L_1,\cdot_1,[\;,\;]_1)$ and $(L_2,\cdot_2,[\;,\;]_2)$ be
two \dualp algebras. Define two operations $\cdot$ and $[\;,\;]$ on $L_1\otimes L_2$ by
Eqs.~\meqref{eq:tensor1} and \meqref{eq:tensor2} respectively.
Then $(L_1\otimes L_2,\cdot, [\;,\;])$ is a \dualp algebra.
\end{thm}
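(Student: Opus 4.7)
The plan is to verify in turn the three defining axioms of a \dualp algebra for $(L_1\otimes L_2,\cdot,[\,,\,])$: commutative associativity of $\cdot$, the Lie algebra axioms for $[\,,\,]$, and the transposed Leibniz rule \meqref{eq:dualp}. Commutative associativity of $\cdot$ on simple tensors is the standard fact about tensor products, extended by bilinearity. Skew-symmetry of $[\,,\,]$ on simple tensors follows immediately: swapping the two arguments introduces a sign from each Lie bracket that is absorbed by the symmetry of the commutative product in the other factor.

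The transposed Leibniz rule for the tensor product admits a short direct verification. Expanding
\[
[(z_1\otimes z_2)\cdot(x_1\otimes x_2),\,y_1\otimes y_2]+[x_1\otimes x_2,\,(z_1\otimes z_2)\cdot(y_1\otimes y_2)]
\]
via \meqref{eq:tensor1}--\meqref{eq:tensor2} produces four summands. Grouping the two whose $L_2$-factor equals $z_2 x_2 y_2$ and applying \meqref{eq:dualp} in $L_1$ yields $2 z_1 [x_1,y_1]_1\otimes z_2 x_2 y_2$; the other two, grouped by the common $L_1$-factor $z_1 x_1 y_1$ and \meqref{eq:dualp} in $L_2$, yield $2 z_1 x_1 y_1\otimes z_2 [x_2,y_2]_2$. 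The sum reconstitutes $2(z_1\otimes z_2)\cdot[x_1\otimes x_2,\,y_1\otimes y_2]$ via \meqref{eq:tensor1}--\meqref{eq:tensor2}; the indispensable factor of $2$ in \meqref{eq:dualp} is supplied exactly once, in each factor separately.

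The main obstacle is the Jacobi identity. Writing $a_i=u_i\otimes v_i$ and expanding $\sum_{\mathrm{cyc}(1,2,3)}[[a_1,a_2],a_3]$ via \meqref{eq:tensor2} produces twelve summands. The six ``pure'' ones, namely $\sum_{\mathrm{cyc}}[[u_1,u_2]_1,u_3]_1\otimes v_1 v_2 v_3$ and $\sum_{\mathrm{cyc}}u_1 u_2 u_3\otimes[[v_1,v_2]_2,v_3]_2$, vanish by the Jacobi identities in $L_1$ and $L_2$, using that $u_1 u_2 u_3$ and $v_1 v_2 v_3$ are fully symmetric in their three arguments. The remaining six ``cross'' terms assemble into
\[
\sum_{\mathrm{cyc}(1,2,3)}\Bigl([u_1,u_2]_1\,u_3\otimes[v_1 v_2,v_3]_2+[u_1 u_2,u_3]_1\otimes[v_1,v_2]_2\,v_3\Bigr),
\]
which must be shown to vanish. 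The strategy is to apply the transposed Leibniz rule \meqref{eq:dualp} in $L_2$ to rewrite each $[v_1 v_2,v_3]_2$ as a combination of $v_a[v_b,v_c]_2$-type expressions plus a residual $[v_b,v_a v_c]_2$, and symmetrically in $L_1$ for $[u_1 u_2,u_3]_1$. Then invoke identity \meqref{eq:gi1} (namely $x[y,z]+y[z,x]+z[x,y]=0$) in both $L_1$ and $L_2$: combined with the cyclic symmetrization and the skew-symmetries $[u_i,u_j]_1=-[u_j,u_i]_1$, $[v_i,v_j]_2=-[v_j,v_i]_2$, this collapses the $u_a[u_b,u_c]_1\otimes v_a[v_b,v_c]_2$-pattern across the three cyclic positions, and a further application of \meqref{eq:dualp} absorbs the residuals. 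The bookkeeping through the $3$-cycles, with repeated use of commutativity $u_i u_j=u_j u_i$ and $v_i v_j=v_j v_i$ to align terms, is the technical heart of the proof; once organized, the cancellation is purely mechanical.
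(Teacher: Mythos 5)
Your proposal is correct and follows essentially the same route as the paper's proof: the cyclic sum splits into ``pure'' terms killed by the Jacobi identities in the two factors and ``cross'' terms killed by combining identity \meqref{eq:gi1} with the transposed Leibniz rule \meqref{eq:dualp}, and the compatibility condition for the tensor product is checked by the same direct four-term grouping. The only cosmetic difference is that the paper applies \meqref{eq:gi1} first and then \meqref{eq:dualp} to the cross terms, whereas you propose the reverse order; both organizations produce the same cancellation.
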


\begin{proof}
For brevity, the subscripts 1 and 2 in the operations $\cdot$ and $[\;,\;]$ will suppressed, since their meaning should be clear from the context.

It is obvious that $(L_1\otimes L_2,\cdot)$ is a commutative
associative algebra. Let $x_1,y_1,z_1\in L_1, x_2,y_2,z_2\in L_2$.
Then we have
\begin{eqnarray*}
&&[[x_1\otimes x_2,y_1\otimes y_2],z_1\otimes z_2] + [[y_1\otimes
y_2,z_1\otimes z_2],x_1\otimes x_2]+[[z_1\otimes z_2,x_1\otimes
x_2],y_1\otimes y_2]\\
&&=(A1)+(A2)+(A3)+(A4),
\end{eqnarray*}
where
\begin{eqnarray*}
(A1)&=&[[x_1,y_1],z_1]\otimes x_2\mcdot y_2\mcdot z_2
+[[y_1,z_1],x_1]\otimes y_2\mcdot z_2\mcdot x_2+
[[z_1,x_1],y_1]\otimes z_2\mcdot x_2\mcdot y_2,\\
(A2)&=&x_1\mcdot y_1\mcdot z_1\otimes [[x_2,y_2],z_2] +y_1\mcdot
z_1\mcdot x_1\otimes [[y_2,z_2],x_2] +z_1\mcdot x_1\mcdot y_1\otimes
[[z_2,x_2],y_2],\\
(A3)&=& [x_1,y_1]\mcdot z_1\otimes [x_2\mcdot y_2, z_2] +
[y_1,z_1]\mcdot x_1\otimes [y_2\mcdot z_2, x_2]+ [z_1,x_1]\mcdot
y_1\otimes [z_2\mcdot x_2, y_2],\\
(A4)&=& [x_1\mcdot y_1,z_1]\otimes [x_2,y_2]\mcdot z_2+[y_1\mcdot
z_1,x_1]\otimes [y_2,z_2]\mcdot x_2 +[z_1\mcdot x_1,y_1]\otimes
[z_2,x_2]\mcdot y_2.
\end{eqnarray*}
Since $(L_1,\cdot)$, $(L_2,\cdot)$ are commutative associative
algebras and $(L_1,[\;,\;])$, $(L_2,[\;,\;])$ are Lie algebras,
$(A1)$ and $(A2)$ are zero. By Eq.~(\mref{eq:gi1}), we have \begin{eqnarray*}
&& ([y_1,z_1]\mcdot x_1)\otimes [y_2\mcdot z_2, x_2] =([z_1,x_1]\mcdot
y_1+[x_1,y_1]\mcdot z_1)\otimes [x_2,y_2\mcdot z_2],\\
&&[y_1\mcdot z_1,x_1]\otimes [y_2,z_2]\mcdot x_2=[x_1,y_1\mcdot
z_1]\otimes ([z_2,x_2]\mcdot y_2+[x_2,y_2]\mcdot z_2).
\end{eqnarray*}
Substituting the above equations into (A3) and (A4) respectively,
we have
\begin{eqnarray*}
(A3)= ([x_1,y_1]\mcdot z_1)\otimes ([x_2\mcdot y_2, z_2]+[x_2,y_2\mcdot
z_2])+([z_1,x_1]\mcdot y_1)\otimes ([z_2\mcdot x_2, y_2]+[x_2,y_2\mcdot z_2]),
\end{eqnarray*}
\begin{eqnarray*} (A4)=([x_1\mcdot
y_1,z_1]+[x_1,y_1\mcdot z_1])\otimes ([x_2,y_2]\mcdot z_2)+([z_1\mcdot
x_1,y_1]+[x_1,y_1\mcdot z_1])\otimes ([z_2,x_2]\mcdot y_2).
\end{eqnarray*}
By Eq.~(\mref{eq:dualp}), we have
$$(A3)=2([x_1,y_1]\mcdot z_1)\otimes (y_2\mcdot [x_2,z_2])+ 2([z_1,x_1]\mcdot
y_1)\otimes (z_2\mcdot [x_2,y_2]),$$
$$(A4)=2 (y_1\mcdot [x_1,z_1])\otimes ([x_2,y_2]\mcdot z_2) +2 (z_1\mcdot [x_1,y_1])\otimes ([z_2,x_2]\mcdot y_2).$$
Hence $(A3)+(A4)=0$ and
therefore $(L_1\otimes L_2,[\;,\;])$ is a Lie algebra.

Furthermore, we have
\begin{eqnarray*}
&&[(x_1\otimes x_2) \mcdot (z_1\otimes z_2), y_1\otimes
y_2]+[x_1\otimes x_2, (y_1\otimes y_2)\mcdot (z_1\otimes z_2)]\\
&&=[x_1\mcdot z_1,y_1]\otimes (x_2\mcdot z_2\mcdot y_2)+(x_1\mcdot
z_1\mcdot y_1)\otimes [x_2\mcdot z_2,y_2]+[x_1,y_1\mcdot z_1]\otimes
(x_2\mcdot y_2\mcdot z_2)\\
&&+(x_1\mcdot y_1\mcdot z_1)\otimes [x_2, y_2\mcdot
z_2]\\
&&=([x_1\mcdot z_1,y_1]+[x_1,y_1\mcdot z_1])\otimes (x_2\mcdot y_2\mcdot z_2)+(x_1\mcdot y_1\mcdot z_1)\otimes ([x_2\mcdot
z_2,y_2]+[x_2, y_2\mcdot z_2])\\
&&=2(z_1\mcdot[x_1,y_1])\otimes (x_2\mcdot y_2\mcdot z_2) +2(x_1\mcdot
y_1\mcdot z_1)\otimes (y_2\mcdot[x_2,z_2])\\
&&=2(z_1\otimes z_2) \mcdot [x_1\otimes x_2,y_1\otimes y_2].
\end{eqnarray*}
Therefore the conclusion holds.
\end{proof}

We end this section by displaying a close relationship between \dualp algebras and Hom-Lie algebras.

Recall that a {\bf Hom-Lie algebra}~\mcite{HLS} is a triple $({\frak g},[\;,\;],\varphi)$ consisting of a
  linear space $\frak g$, a skew-symmetric bilinear operation $[\;,\;]:\wedge^2\frak g\rightarrow
  \frak g$ and a linear map $\varphi_:\frak g\rightarrow\frak g$ satisfying
  \begin{equation}
    [\varphi(x),[y,z]]+[\varphi(y),[z,x]]+[\varphi(z),[x,y]]=0,\;\;\forall x,y,z\in \frak g.
  \end{equation}
If in addition, $\varphi$ is an algebra homomorphism, then the
Hom-Lie algebra $({\frak g},[\;,\;],\varphi)$ is called {\bf
multiplicative}.

\begin{pro}Let $(L,\cdot,[\;,\;])$ be a \dualp algebra. For any
$h\in L$, define a linear map $\varphi_{h}: L\rightarrow
L$ by
\begin{equation}\mlabel{eq:varphi}
\varphi_{h}(x)=h\mcdot x, \;\;\forall x\in L.
\end{equation} Then
$(L,[\;,\;],\varphi_{h})$ is a Hom-Lie algebra. Moreover, $\varphi_h$ satisfies
\begin{equation}
\varphi_{h}^{2}([x,y])=[\varphi_{h}(x),\varphi_{h}(y)],\;\;\forall
x,y\in L.\mlabel{eq:varphi2}
\end{equation}
Hence if $\varphi_h^2=\varphi_h$, that is, $h\mcdot h\mcdot x=h\mcdot
x$ for all $x\in L$, then $(L,[\;,\;],\varphi_{h})$ is a
multiplicative Hom-Lie algebra.
\end{pro}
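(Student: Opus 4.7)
The proof decomposes cleanly into three assertions, each of which is essentially a re-interpretation of an identity already established in Theorem~\mref{thm:id}, so the strategy is to match each claim with the appropriate previous identity rather than to run fresh computations.

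First I would handle the Hom-Jacobi identity. Unfolding the definition of $\varphi_h$, the required identity
\[
[\varphi_h(x),[y,z]]+[\varphi_h(y),[z,x]]+[\varphi_h(z),[x,y]]=0
\]
reads
\[
[h\mcdot x,[y,z]]+[h\mcdot y,[z,x]]+[h\mcdot z,[x,y]]=0,
\]
which is literally Eq.~\meqref{eq:gi3} of Theorem~\mref{thm:id}. So once the definition is unwound, this part is a one-line citation.

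Next I would establish Eq.~\meqref{eq:varphi2}. The natural move is to apply Eq.~\meqref{eq:gi5}, namely
\[
[x\mcdot u,y\mcdot v]+[x\mcdot v,y\mcdot u]=2u\mcdot v\mcdot [x,y],
\]
with the specialization $u=v=h$. The left-hand side collapses to $2[h\mcdot x,h\mcdot y]$ and the right-hand side to $2\,h\mcdot h\mcdot [x,y]$, which, upon cancelling the factor $2$ and recognizing both sides as $[\varphi_h(x),\varphi_h(y)]$ and $\varphi_h^{2}([x,y])$ respectively, gives exactly~\meqref{eq:varphi2}. I expect this specialization step to be the main (though still very light) obstacle: one has to see that Eq.~\meqref{eq:gi5} is the right identity to invoke and that setting $u=v=h$ is what produces the square of $\varphi_h$.

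Finally, the multiplicative case is a one-line consequence of the first two parts. Under the hypothesis $\varphi_h^{2}=\varphi_h$, one has
\[
\varphi_h([x,y]) = \varphi_h^{2}([x,y]) = [\varphi_h(x),\varphi_h(y)],
\]
where the first equality uses the assumption and the second is~\meqref{eq:varphi2}. Combined with the Hom-Jacobi identity just proved, this upgrades $(L,[\;,\;],\varphi_h)$ from a Hom-Lie algebra to a multiplicative one, completing the proof.
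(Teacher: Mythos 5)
Your proposal is correct and follows exactly the paper's own argument: the Hom-Jacobi identity is the direct citation of Eq.~\meqref{eq:gi3}, Eq.~\meqref{eq:varphi2} comes from specializing $u=v=h$ in Eq.~\meqref{eq:gi5}, and the multiplicative case follows by combining the idempotency hypothesis with Eq.~\meqref{eq:varphi2}. No gaps and no meaningful divergence from the paper's proof.
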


\begin{proof}
Let $x,y,z\in L$. Then by Eq.~(\mref{eq:gi3}), we have
$$[\varphi_{h}(x),[y,z]]+[\varphi_{h}(y),[z,x]]+[\varphi_{h}(z),[x,y]]=0.
$$
Hence $(L,[\;,\;],\varphi_{h})$ is a Hom-Lie algebra. By Eq.~(\mref{eq:gi5}), we have
$$[x\mcdot h,y\mcdot h]+[x\mcdot h, y\mcdot h]=2h\mcdot h\mcdot [x,y].$$
Hence Eq.~(\mref{eq:varphi2}) holds. Thus the conclusion follows.
\end{proof}

\subsection{Operadic and categorial properties of \dualp algebras}
\subsubsection{Operadic dual of \dualp algebra}

It is well known that the operad of Poisson algebras is
self dual~\mcite{LV}. It is interesting to see that the same is true for the operad of \dualp algebras.

\begin{pro}
The operad of \dualp algebras, denoted by \dualpop, is self dual.
\mlabel{pp:dualop}
\end{pro}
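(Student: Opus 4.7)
The plan is to apply the standard Koszul duality recipe for binary quadratic operads~\mcite{LV}. Write $\dualpop = \mathcal{F}(V)/\langle R\rangle$, where $V(2) = \mathbf{k}\mu \oplus \mathbf{k}\beta$ with $\mu$ spanning the trivial $\mathbb{S}_2$-representation (for the commutative associative product) and $\beta$ spanning the sign $\mathbb{S}_2$-representation (for the Lie bracket), and $R \subset \mathcal{F}(V)(3)$ is the $\mathbb{S}_3$-submodule generated by the associator for $\mu$, the Jacobiator for $\beta$, and the transposed Leibniz relation~\meqref{eq:dualp}. By definition the Koszul dual operad is $\mathcal{F}(V^\vee)/\langle R^\perp\rangle$, where $V^\vee(2) = V(2)^* \otimes \mathrm{sgn}_2$ and $R^\perp$ is the orthogonal complement under the canonical $\mathbb{S}_3$-invariant pairing.

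First I would observe that tensoring with $\mathrm{sgn}_2$ interchanges the trivial and the sign $\mathbb{S}_2$-representations, so there is a canonical $\mathbb{S}_2$-isomorphism $V^\vee \cong V$ sending $\mu^\vee \mapsto \beta$ and $\beta^\vee \mapsto \mu$ up to normalization. Under this identification the task becomes showing $R^\perp = R$ inside $\mathcal{F}(V)(3)$. I would then decompose $\mathcal{F}(V)(3)$ according to the multiplicities of $\mu$ and $\beta$ used at the two internal nodes: the pure $\mu^{\otimes 2}$ and $\beta^{\otimes 2}$ subspaces contain the associator and the Jacobiator respectively, while the mixed $\mu\beta$-subspace is where the transposed Leibniz relations live. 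For the pure parts, the classical self-dualities $\mathit{Com}^{\,!} = \mathit{Lie}$ and $\mathit{Lie}^{\,!} = \mathit{Com}$ ensure that under $V^\vee \cong V$ the associator is sent to the Jacobiator and vice versa, so these contributions to $R^\perp$ and $R$ match automatically.

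The heart of the argument is to show that in the mixed subspace the $\mathbb{S}_3$-translates of the transposed Leibniz relation form a self-orthogonal subspace. I would pick a basis of the mixed subspace indexed by the leaf permutation and by which internal node is labeled $\mu$ (respectively $\beta$), write the three independent $\mathbb{S}_3$-translates of $2z\mcdot[x,y] - [z\mcdot x,y] - [x,z\mcdot y]$ explicitly in this basis, and compute the Gram matrix of the induced pairing under the sign-twisted swap $\mu \leftrightarrow \beta$.

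The main obstacle will be this explicit pairing computation, which is a finite but delicate bookkeeping task involving signs coming from the $\mathbb{S}_3$-action and scalar contributions from the transpositions $\mu \leftrightarrow \beta$ at the two internal nodes. The reward is that the scalar $2$ appearing in~\meqref{eq:dualp} is precisely the coefficient that makes the pairing vanish on the transposed Leibniz element, which gives a conceptual explanation for the otherwise mysterious rescaling in the definition. A dimension count (the span of the transposed Leibniz relations and its orthogonal complement in the mixed subspace have complementary dimensions summing to the whole mixed subspace) then shows $R^\perp = R$ under $V^\vee \cong V$, completing the proof that $\dualpop$ is self dual.
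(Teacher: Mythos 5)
Your proposal follows essentially the same route as the paper's proof: both reduce to the standard Koszul duality recipe for binary quadratic operads, handle the associator and Jacobiator via the classical duality $\mathit{Comm}^!=\mathit{Lie}$, and then verify by a direct pairing computation that the three $S_3$-translates of the transposed Leibniz relation annihilate their images under the swap $\mu\leftrightarrow\nu$, concluding by the dimension count $3+3=6$ in the mixed component. The argument is correct as a plan (the Gram-matrix computation you defer is exactly the ``direct inspection'' step in the paper), so there is nothing substantive to add.
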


\begin{proof}
The operad $\dualpop$ is binary quadratic, with its two-dimensional operation space $V=V(2)$ spanned by the binary operations
$\mu$ for the commutative associative
operation and $\nu$ for the Lie bracket. So $\mu$ satisfies the
commutative associative law and $\nu$ is skew-symmetric and satisfies the Jacobi
identity. Further, the operations $\mu$ and $\nu$ satisfy the compatibility relation (or distributive law)
\begin{equation}
2\mu(x,\nu(y,z))=\nu(\mu(x,y),z)+\nu(y,\mu(x,z)),
\mlabel{eq:dualpop1}
\end{equation}
which is rewritten in the standard form as in~\cite[\S 7.6.2]{LV}
\begin{equation}
2\mu(\nu(y,z),x)-\nu(\mu(x,y),z)-\nu(\mu(z,x),y),
\mlabel{eq:dualpop2}
\end{equation}
that is,
$$
2\mu\cirii \nu - \nu\ciri \mu + \nu\ciriii \mu.
$$
Taking into account of the action of the symmetric group $S_3$, the relation space of the binary quadratic operad \dualpop   is of dimensional three, spanned by the basis elements
$$ 2\mu\cirii \nu - \nu\ciri \mu + \nu\ciriii \mu, \quad 2\mu\ciriii \nu - \nu\cirii \mu + \nu\ciri \mu,
\quad
2\mu\ciri \nu - \nu\ciriii \mu + \nu\cirii \mu.$$

For the dual space $V=\bfk\{\check{\mu}, \check{\nu}\}$, by the duality between the commutative associative operad and the Lie operad, $\check{\mu}$ is a Lie bracket and $\check{\nu}$ is commutative associative. So the corresponding compatibility relations have its basis
$$ 2\check{\nu}\cirii \check{\mu} - \check{\mu}\ciri \check{\nu} + \check{\mu}\ciriii \check{\nu}, \quad
2\check{\nu}\ciriii \check{\mu} - \check{\mu}\cirii \check{\nu} + \check{\mu}\ciri \check{\nu},
\quad
2\check{\nu}\ciri \check{\mu} - \check{\mu}\ciriii \check{\nu} + \check{\mu}\cirii \check{\nu}.$$
By direct inspection, we see that the two sets of relations annihilate each other under the operad pairing given by
$$ \langle \check{\mu}, \mu \rangle =\langle \check{\nu}, \nu \rangle =1, \langle \check{\mu}, \nu\rangle =\langle\check{\nu},\mu\rangle =0 .$$
Since both sets span a subspace of dimension three out of a space of dimension six, they are the exact annihilators of each other. Since the commutative associative operad is already dual to the Lie operad, this proves that the Koszul dual of \dualpop is itself.
\end{proof}

\subsubsection{Independence of Poisson algebra and \dualp algebra}
\mlabel{ss:indep} We show that the
compatibility relations of the Poisson algebra and those of the \dualp algebra are independent in the following sense.

\begin{pro} Let $(L,\cdot)$ be a commutative associative algebra and $(L,[\;,\;])$ be a Lie algebra. Then $(L,\cdot,[\;,\;])$ is both a
Poisson algebra and a \dualp algebra if and only
if
\begin{equation}
x\mcdot [y,z]=[x\mcdot y,z]=0,\;\;\forall x,y,z\in L.
\mlabel{eq:inter0}
\end{equation}
\mlabel{pp:inter0}
\end{pro}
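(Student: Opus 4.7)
The ``if'' direction is immediate from Proposition~\ref{pro:trivial}.\meqref{it:trivial2}, so the entire content of the proposition lies in the converse. Assuming both the Poisson Leibniz rule~\meqref{eq:LR} and the transposed Leibniz rule~\meqref{eq:dualp}, my strategy is to combine them to force $x\cdot[y,z]=0$ first; the vanishing of $[x\cdot y, z]$ will then follow as a one-line consequence of~\meqref{eq:LR} applied to $[z, x\cdot y]$ and skew-symmetry.

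To begin, I will apply the Poisson Leibniz rule to each of the two brackets on the right-hand side of~\meqref{eq:dualp}, rewriting $[z\cdot x, y]$ and $[x, z\cdot y]$ as $\cdot$-products of single brackets. After substituting these expansions into~\meqref{eq:dualp} and cancelling the two copies of $z\cdot[x,y]$ that the factor $2$ on the left-hand side exactly absorbs, I expect to be left with the compact bilinear relation
$$ a\cdot[b,c] + b\cdot[a,c] = 0, \qquad \forall\, a,b,c \in L. $$

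The pivotal step is then to bring in the cyclic identity~\meqref{eq:gi1} of Theorem~\mref{thm:id}, which is available because $(L,\cdot,[\;,\;])$ is transposed Poisson by hypothesis. The relation above, combined with skew-symmetry of the bracket, propagates cyclically to $a\cdot[b,c] = b\cdot[c,a] = c\cdot[a,b]$, so the left-hand side of~\meqref{eq:gi1} collapses to $3\,a\cdot[b,c]$. Since the base field has characteristic zero, this forces $a\cdot[b,c]=0$. Plugging this back into the Poisson expansion $[z, x\cdot y] = [z,x]\cdot y + x\cdot[z,y]$ then yields $[x\cdot y, z]=0$ as well.

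The only subtlety I anticipate is conceptual rather than computational: the factor $2$ in~\meqref{eq:dualp} is essential, since otherwise the $z\cdot[x,y]$ terms produced by Poisson Leibniz would fail to cancel and the simple bilinear relation above would not emerge. Equally important, the transposed-Poisson identity~\meqref{eq:gi1} is precisely what upgrades the symmetric vanishing $a\cdot[b,c] = -b\cdot[a,c]$ — which does not by itself force any product to be zero — to the full vanishing $a\cdot[b,c]=0$ needed to close the argument.
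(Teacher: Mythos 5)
Your overall route is the same as the paper's: combine \meqref{eq:LR} with \meqref{eq:dualp} to extract a bilinear relation, use \meqref{eq:gi1} to conclude $x\cdot[y,z]=0$, and finish with one more application of \meqref{eq:LR} to kill $[x\cdot y,z]$. However, the pivotal relation you extract has the wrong sign, and the step that depends on it fails as written. Expanding by \meqref{eq:LR} gives $[z\cdot x,y]=-[y,z]\cdot x+z\cdot[x,y]$ and $[x,z\cdot y]=[x,z]\cdot y+z\cdot[x,y]$; after the two copies of $z\cdot[x,y]$ cancel against the left-hand side of \meqref{eq:dualp}, what remains is
$$[x,z]\cdot y=[y,z]\cdot x,$$
that is, $a\cdot[b,c]$ is \emph{symmetric} in $a,b$, not antisymmetric as you claim. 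With the correct sign, combining with skew-symmetry of the bracket gives $a\cdot[b,c]=-b\cdot[c,a]$, so the expression is \emph{anti}-invariant under cyclic permutation and the left-hand side of \meqref{eq:gi1} telescopes to $a\cdot[b,c]-a\cdot[b,c]+a\cdot[b,c]=a\cdot[b,c]$ rather than to $3\,a\cdot[b,c]$. The conclusion $a\cdot[b,c]=0$ survives, but not by the mechanism you describe, and the ``divide by $3$ in characteristic zero'' remark is beside the point. This corrected version is exactly the paper's argument, which writes $[x,y]\cdot z=[x,z]\cdot y-[y,z]\cdot x=0$.

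The sign error also undercuts your closing conceptual paragraph. Symmetry of $a\cdot[b,c]$ in the first two slots together with skew-symmetry in the last two slots already forces $a\cdot[b,c]=0$ by pure multilinear algebra (a trilinear map symmetric in one adjacent pair and alternating in an overlapping pair must vanish when $2$ is invertible), so \meqref{eq:gi1} is a convenience here rather than the indispensable ingredient you make it out to be. Your observation that an \emph{antisymmetric} relation $a\cdot[b,c]=-b\cdot[a,c]$ would not by itself force vanishing is correct, but it applies to a relation that the computation does not actually produce. The final step, deducing $[x\cdot y,z]=0$ from $x\cdot[y,z]=0$ via \meqref{eq:LR}, is fine.
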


\begin{proof}
The ``if" part has been given in
Proposition~\mref{pro:trivial}.\meqref{it:trivial2}. For the ``only if" part, let $x,y,z\in L$. By Eq.~(\mref{eq:LR}), we have
$$[z\mcdot x,y]=-x\mcdot [y,z]-z\mcdot [y,x],\;\;[x,z\mcdot y]=y\mcdot [x, z]
+z\mcdot [x,y].$$ Then by Eq.~(\mref{eq:dualp}), we have
$$0=[z\mcdot x,y]+[x,z\mcdot y]-2z\mcdot [x,y]=[x,z]\mcdot y-[y,z]\mcdot x.$$
By Eq.~(\mref{eq:gi1}), we have
$$[x,y]\mcdot z= [x,z]\mcdot y-[y,z]\mcdot x=0.$$
By Eq.~(\mref{eq:LR}) again, we have $[x\mcdot y,z]=0$.
\end{proof}

\begin{rmk}\mlabel{rmk:inter}
The above conclusion means that, with the relations of the
commutative associative product and the Lie bracket, the
intersection of the compatibility conditions of the Poisson
algebra and of the \dualp
algebra is trivial. That is, the structures of
\dualp algebras and Poisson algebras are completely inconsistent.
To put it in another context, the operad of \dualp algebras
together with Poisson algebras is the direct sum~\mcite{LV} of the
operad of the commutative associative algebra and that of
the Lie algebra.
\end{rmk}

\section{From Novikov-Poisson algebras and pre-Lie Poisson algebras to \dualp algebras}
\mlabel{sec:novi}
We now show that, for several Poisson related
algebraic structures, taking the commutator gives
rise to \dualp algebras.

\begin{defi} \mcite{XuX} A {\bf  Novikov-Poisson algebra} is
a triple $(L,\cdot, \circ)$, where $L$ is a vector space and
$\mcdot, \circ$ are two bilinear operations on $L$ satisfying the following conditions:
\begin{enumerate}
\item[(1)] $(L,\cdot)$ is a commutative associative algebra.
\item[(2)] $(L,\circ)$ is a Novikov algebra, that is, for all
$x,y,z\in L$,
\begin{equation}(x\circ y)\circ z-(y\circ x)\circ z=x\circ(y\circ z)-y\circ(x\circ z),\mlabel{eq:PLie}\end{equation}
\begin{equation} (x\circ y)\circ z=(x\circ z)\circ y.\mlabel{eq:Novikov}\end{equation}
\item[(3)] The following  equations hold for all $x,y,z\in L$.
\begin{equation}
(x\mcdot y)\circ z=x\mcdot(y\circ z),\mlabel{eq:NP1} \end{equation}
\begin{equation}(x\circ y)\mcdot z-(y\circ x)\mcdot z=x\circ(y\mcdot z)-y\circ(x\mcdot z). \mlabel{eq:NP2}\end{equation}
\end{enumerate}
\end{defi}

Taking the commutator in a Novikov-Poisson algebra, we obtain

\begin{thm}\mlabel{thm:NPc}
Let $(L,\cdot,\circ)$ be a Novikov-Poisson algebra. Define
\begin{equation}[x,y]=x\circ y-y\circ x,\;\; \forall x,y\in L.\mlabel{eq:com}\end{equation}
Then
$(L,\cdot,[\;,\;])$ is a \dualp algebra.
\end{thm}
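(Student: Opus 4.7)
The plan is to verify the two axioms defining a \dualp algebra: that $(L,[\;,\;])$ is a Lie algebra, and that the transposed Leibniz rule \eqref{eq:dualp} holds.

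For the Lie algebra structure, skew-symmetry is built into the definition \eqref{eq:com}. The Jacobi identity is a standard consequence of the Novikov axioms \eqref{eq:PLie} and \eqref{eq:Novikov}: every Novikov algebra is Lie-admissible because \eqref{eq:PLie} is precisely the left-symmetric (pre-Lie) identity, and the commutator of any pre-Lie product satisfies Jacobi. I would either cite this fact or indicate the one-line computation. This step should be routine.

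The heart of the argument is the transposed Leibniz rule, and here the two compatibility relations \eqref{eq:NP1} and \eqref{eq:NP2} of the Novikov-Poisson algebra do all the work. I would expand
\begin{align*}
[z\mcdot x,y]+[x,z\mcdot y]
&=(z\mcdot x)\circ y-y\circ(z\mcdot x)+x\circ(z\mcdot y)-(z\mcdot y)\circ x
\end{align*}
and apply \eqref{eq:NP1} (combined with commutativity of $\mcdot$) to rewrite $(z\mcdot x)\circ y=z\mcdot(x\circ y)$ and $(z\mcdot y)\circ x=z\mcdot(y\circ x)$. This collects a piece $z\mcdot(x\circ y-y\circ x)=z\mcdot[x,y]$, leaving the residue $x\circ(z\mcdot y)-y\circ(z\mcdot x)$. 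Then I would apply \eqref{eq:NP2} with arguments $(x,y,z)$ and again use commutativity of $\mcdot$ to recognize this residue as exactly $z\mcdot(x\circ y)-z\mcdot(y\circ x)=z\mcdot[x,y]$. Adding the two contributions gives $2z\mcdot[x,y]$, as required.

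The only subtle point, and the one I would double-check, is the bookkeeping between the noncommutative product $\circ$ and the commutative product $\mcdot$ when applying \eqref{eq:NP2}: the equation has to be invoked with the correct triple so that the resulting terms $x\circ(z\mcdot y)$ and $y\circ(z\mcdot x)$ match the residue above (rather than, say, $x\circ(y\mcdot z)$, which would require an additional commutativity step). Apart from this, the derivation is essentially a single algebraic manipulation, and no further identities beyond \eqref{eq:NP1} and \eqref{eq:NP2} (and commutativity/associativity of $\mcdot$) are needed. Proposition~\ref{pp:derdual} will then follow immediately by applying this theorem to the Novikov-Poisson algebra $(L,\mcdot,\circ)$ with $x\circ y:=x\mcdot D(y)$ coming from a commutative associative algebra with a derivation, which explains the appearance of the factor $2$ in \eqref{eq:dualp}.
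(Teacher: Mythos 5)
Your proposal is correct and follows essentially the same route as the paper: the Lie structure is cited from Lie-admissibility of the pre-Lie identity \eqref{eq:PLie}, and the transposed Leibniz rule is obtained by applying \eqref{eq:NP1} to the terms $(z\cdot x)\circ y$ and $(z\cdot y)\circ x$ and \eqref{eq:NP2} (with commutativity of $\cdot$) to the remaining residue, exactly as in the paper's one-step computation. The only cosmetic difference is that the paper verifies that the combination $[x\cdot z,y]+[x,y\cdot z]-2z\cdot[x,y]$ splits into two groups each vanishing by \eqref{eq:NP1} and \eqref{eq:NP2} respectively, whereas you build up the two copies of $z\cdot[x,y]$ directly; the content is identical.
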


\begin{proof}
It is known that $(L,[\;,\;])$ is a Lie algebra (cf. \mcite{Bu}, in
fact, it is due to Eq.~(\mref{eq:PLie}) only). By Eqs.~(\mref{eq:NP1})
and (\mref{eq:NP2}), for all $x,y,z\in L$, we have
\begin{eqnarray*}
&&[x\mcdot z,y]+[x,y\mcdot
z]-2z\mcdot[x,y]\\
&=&(x\mcdot z)\circ y-y\circ (x\mcdot z) +x\circ
(y\mcdot z)-(y\mcdot z)\circ x-2z\mcdot(x\circ y-y\circ x)\\
&=&((x\mcdot z)\circ y-z\mcdot(x\circ
y)+z\mcdot(y\circ x)-(y\mcdot z)\circ x)\\
&&-((x\circ y)\mcdot z-(y\circ x)\mcdot
z-x\circ (y\mcdot z)+y\circ(x\mcdot z))\\
&=&0.
\end{eqnarray*}
Hence the conclusion holds.
\end{proof}

We quote the following classical results before giving applications.

\begin{lem}\mlabel{lem:Xu}
Let $(L,\cdot)$ be a commutative associative algebra and $D$ be a derivation. Define a bilinear operation $\circ$ on $L$ by
\begin{equation}
x\circ y=x\mcdot D(y),\;\;\forall x,y\in L. \mlabel{eq:SG} \end{equation}
Then
\begin{enumerate}
\item {\rm (S.~Gelfand, \mcite{GD})}
$(L,\circ)$ is a Novikov algebra. \item {\rm \mcite{XuX}}
$(L,\cdot,\circ)$ is a Novikov-Poisson algebra.
\end{enumerate}
\end{lem}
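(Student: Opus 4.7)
The plan is to verify both parts by direct computation using only commutativity, associativity, and the Leibniz rule $D(x\mcdot y)=D(x)\mcdot y+x\mcdot D(y)$. I will treat the two items in sequence.

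For part (1), I first tackle the Novikov identity \meqref{eq:Novikov}. Expanding both sides,
\[
(x\circ y)\circ z=x\mcdot D(y)\mcdot D(z),\qquad (x\circ z)\circ y=x\mcdot D(z)\mcdot D(y),
\]
and commutativity of $\mcdot$ finishes this. Next, for the left-symmetry (pre-Lie) identity \meqref{eq:PLie}, I compute
\[
x\circ(y\circ z)=x\mcdot D\bigl(y\mcdot D(z)\bigr)=x\mcdot D(y)\mcdot D(z)+x\mcdot y\mcdot D^{2}(z),
\]
while $(x\circ y)\circ z=x\mcdot D(y)\mcdot D(z)$, so the associator $x\circ(y\circ z)-(x\circ y)\circ z$ equals $x\mcdot y\mcdot D^{2}(z)$. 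The same expression, with $x$ and $y$ swapped, equals $y\mcdot x\mcdot D^{2}(z)$, and commutativity of $\mcdot$ shows the two associators coincide; this is exactly \meqref{eq:PLie}.

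For part (2), I verify the two compatibility relations of a Novikov-Poisson algebra. Condition \meqref{eq:NP1} follows from associativity alone:
\[
(x\mcdot y)\circ z=(x\mcdot y)\mcdot D(z)=x\mcdot\bigl(y\mcdot D(z)\bigr)=x\mcdot(y\circ z).
\]
For \meqref{eq:NP2}, I expand the right-hand side using Leibniz:
\[
x\circ(y\mcdot z)=x\mcdot D(y)\mcdot z+x\mcdot y\mcdot D(z),\qquad y\circ(x\mcdot z)=y\mcdot D(x)\mcdot z+x\mcdot y\mcdot D(z),
\]
so the difference collapses to $x\mcdot D(y)\mcdot z-y\mcdot D(x)\mcdot z$, which is precisely $(x\circ y)\mcdot z-(y\circ x)\mcdot z$.

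Honestly, there is no real obstacle here: the lemma is one of those bookkeeping results where the Leibniz rule does all the work, and the only thing to watch is that the cancellation in \meqref{eq:NP2} uses the commutativity $x\mcdot y=y\mcdot x$ in the $D(z)$ term. I will present the computations compactly, and may simply refer to~\mcite{GD,XuX} for the original sources while including the verification for completeness so that it feeds directly into the proof of Proposition~\mref{pp:derdual} (via Corollary~\mref{co:dualpSG}) and Theorem~\mref{thm:NPc}.
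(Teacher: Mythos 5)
Your verification is correct in every step: the Novikov identity \meqref{eq:Novikov} follows from commutativity alone, the pre-Lie identity \meqref{eq:PLie} reduces to the symmetry of the associator $-x\mcdot y\mcdot D^2(z)$ in $x$ and $y$, \meqref{eq:NP1} is pure associativity, and the cancellation of the $D(z)$ terms in \meqref{eq:NP2} is handled properly. The paper itself gives no proof of this lemma, quoting it as a classical result of Gelfand and Xu, so your direct computation is exactly the expected argument and can safely be compressed to a citation as you suggest.
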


Combining Theorem~\mref{thm:NPc} and Lemma~\mref{lem:Xu} leads to the following consequence, which can be regarded
as the basic examples of \dualp algebras in analog to the construction of Poisson algebras from commutative associative algebras with commuting derivations.

\begin{cor} \mlabel{co:dualpSG}
{\rm (=Proposition~\mref{pp:derdual})} Let $(L,\cdot)$ be a
commutative associative algebra and $D$ be a derivation. Then
$(L,\cdot,[\;,\;])$ is a  \dualp algebra, where
\begin{equation}
 [x,y]=x\mcdot D(y)-y\mcdot D(x),\;\;\forall x,y\in L.
\end{equation}
\end{cor}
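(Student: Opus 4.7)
The plan is to derive this corollary as a one-line consequence of the two results assembled just above it, namely Theorem~\mref{thm:NPc} (on taking commutators in Novikov-Poisson algebras) and Lemma~\mref{lem:Xu} (the S.~Gelfand/Xu construction of a Novikov-Poisson algebra from a commutative associative algebra with a derivation). The strategy is therefore to factor the construction through the intermediate Novikov-Poisson structure rather than verify the transposed Leibniz rule~\meqref{eq:dualp} from scratch.

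First I would apply Lemma~\mref{lem:Xu} to the given data $(L,\cdot,D)$. Setting
\begin{equation*}
x\circ y := x\mcdot D(y), \quad \forall x,y\in L,
\end{equation*}
Lemma~\mref{lem:Xu} yields immediately that $(L,\circ)$ is a Novikov algebra and that $(L,\cdot,\circ)$ is in fact a Novikov-Poisson algebra, i.e.\ identities \meqref{eq:PLie}--\meqref{eq:NP2} all hold.

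Next I would feed this Novikov-Poisson algebra into Theorem~\mref{thm:NPc}. That theorem guarantees that the commutator bracket
\begin{equation*}
[x,y] := x\circ y - y\circ x = x\mcdot D(y) - y\mcdot D(x)
\end{equation*}
together with the commutative associative product $\cdot$ defines a \dualp algebra on $L$. The second equality is just the definition of $\circ$, so the bracket produced coincides exactly with the one named in the statement of the corollary, and we are done.

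There is no real obstacle here: all the work has already been carried out in Theorem~\mref{thm:NPc} and Lemma~\mref{lem:Xu}. In particular, the factor $2$ in the transposed Leibniz rule appears automatically out of the Novikov-Poisson identities \meqref{eq:NP1} and \meqref{eq:NP2}, exactly as in the direct verification displayed in the proof of Proposition~\mref{pp:derdual}; that verification can be viewed as the specialization of the proof of Theorem~\mref{thm:NPc} to the Novikov product $x\circ y = x\cdot D(y)$, and hence offers a concrete sanity check that the abstract route taken here reproduces the correct identity.
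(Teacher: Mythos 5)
Your proposal is correct and is exactly the route the paper takes: the corollary is introduced with the words ``Combining Theorem~\mref{thm:NPc} and Lemma~\mref{lem:Xu} leads to the following consequence,'' so the paper likewise factors the construction through the Novikov-Poisson algebra $x\circ y = x\cdot D(y)$ and then takes the commutator. Nothing is missing.
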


For examples in low dimensions, we have
\begin{ex} The complex commutative associative algebras in dimensions in 2 and 3 and their
derivations were given in~\mcite{BM}. Hence the induced \dualp
algebras by Corollary~\mref{co:dualpSG} can be obtained explicitly.
We give the 2-dimensional cases. Let $L$ be a 2-dimensional vector
space with a basis $\{e_1,e_2\}$. In the following, we give the
non-zero multiplication of the commutative associative algebra
$(L,\cdot)$, the set ${\rm Der_a}(L)$ of all derivations of
$(L,\cdot)$ and the induced Lie algebra $(L,[\;,\;])$ such that
$(L,\cdot,[\;,\;])$ is a  \dualp algebra.
\begin{enumerate}
\item[(1)] $(L,\cdot): e_i\mcdot e_j=0,i,j=1,2;\;\; {\rm
Der_a}(L)={\rm End}(L);\;\; (L,[\;,\;]): [e_1,e_2]=0$. \item[(2)]
$(L,\cdot): e_1\mcdot e_1=e_1,e_2\mcdot e_2=e_2;\;\; {\rm
Der_a}(L)=0;\;\; (L,[\;,\;]): [e_1,e_2]=0$. \item[(3)] $(L,\cdot):
e_1\mcdot e_1=e_1,;\;\; {\rm Der_a}(L)=\left(\begin{matrix} 0& 0\cr
0& a\cr\end{matrix}\right);\;\; (L,[\;,\;]): [e_1,e_2]=0$.
\item[(4)] $(L,\cdot): e_1\mcdot e_1=e_1,e_2\mcdot e_1=e_1\mcdot
e_2=e_2;\;\; {\rm Der_a}(L)=\left(\begin{matrix} 0& 0\cr 0&
a\cr\end{matrix}\right);\;\; (L,[\;,\;]): [e_1,e_2]=ae_2$.
\item[(5)] $(L,\cdot): e_1\mcdot e_1=e_2,;\;\; {\rm
Der_a}(L)=\left(\begin{matrix} a& 0\cr b&
2a\cr\end{matrix}\right);\;\; (L,[\;,\;]): [e_1,e_2]=be_2$.
\end{enumerate}
All the parameters are in the complex number field $\mathbb C$.
\end{ex}

The following combination of the pre-Lie algebra and commutative
associative algebra arose from the study of Hopf algebras of
rooted trees in quantum field theory and control theory, as well
as of $F$-manifolds~\mcite{Foissy,Mansuy,Do}.

\begin{defi} \mcite{Mansuy}
A {\bf pre-Lie commutative algebra (or PreLie-Com algebra)} is a triple $(L,\cdot,\circ)$, where $(L,\cdot)$ is a commutative
associative algebra and $(L,\circ)$ is a pre-Lie algebra
satisfying
\begin{equation}\label{eq:PLiecomm}
  x\circ(y\mcdot z)-(x\circ y)\mcdot z-y\mcdot (x\circ z)=0,\quad\forall~x,y,z\in L.
\end{equation}
\end{defi}

Note that Eq.~(\ref{eq:PLiecomm}) means
that the left multiplication operators of the pre-Lie algebra
$(L,\circ)$ are derivations of the commutative associative algebra
$(L,\cdot)$.

\begin{ex}
Let $(L,\cdot)$ be a commutative associative algebra and $D$ be a
derivation. It is straightforward to show that $(L,\cdot,\circ)$
is a PreLie-Com algebra, where $(L,\circ)$ is defined by
Eq.~(\mref{eq:SG}): $x\circ y:=x\mcdot D(y)$ for all
$x,y\in L$.
\end{ex}

There is also a notion of a {\bf differential Novikov-Poisson algebra}
introduced in \cite[Definition 4.1]{BCZ1}, which is a triple $(L,\cdot,\circ)$, where $(L,\cdot)$ is a commutative associative algebra and $(L,\circ)$ is a Novikov algebra satisfying Eqs.~(\ref{eq:NP1}), (\ref{eq:NP2}) and
(\ref{eq:PLiecomm}).

We now introduce a general context to include Novikov-Poisson
algebras, pre-Lie commutative algebras under certain conditions and differential Novikov-Poisson algebras.

\begin{defi} A {\bf  pre-Lie Poisson algebra} is
a triple $(L,\cdot, \circ)$, where $L$ is a vector space and
$\mcdot, \circ$ are two bilinear operations on $L$ satisfying the
following conditions.
\begin{enumerate}
\item[(1)] $(L,\cdot)$ is a commutative associative algebra.
\item[(2)] $(L,\circ)$ is a pre-Lie algebra, that is, Eq.~(\mref{eq:PLie}) holds.
\item[(3)] Eqs.~(\mref{eq:NP1}) and (\mref{eq:NP2}) hold.
\end{enumerate}
\end{defi}

For the relationship among these notions, we have
\begin{lem}\mlabel{lem:PLie}
\begin{enumerate}
\item A Novikov-Poisson algebra is a pre-Lie Poisson algebra;
\mlabel{it:PLie1} \item A PreLie-Com algebra $(L,\cdot,\circ)$
satisfying Eq.~\meqref{eq:NP1} is a pre-Lie Poisson algebra;
\mlabel{it:PLie2} \item A differential Novikov-Poisson algebra is
a PreLie-Com algebra satisfying Eq.~\meqref{eq:NP1} and hence is
pre-Lie Poisson algebra. \mlabel{it:PLie3}
\end{enumerate}
\end{lem}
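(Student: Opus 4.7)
The plan is to address the three parts in order, with the only real computation needed for part \meqref{it:PLie2}; parts \meqref{it:PLie1} and \meqref{it:PLie3} will then follow almost by unpacking definitions.

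For \meqref{it:PLie1}, I would simply observe that the Novikov axiom \meqref{eq:PLie} is exactly the pre-Lie identity, so any Novikov algebra $(L,\circ)$ is in particular a pre-Lie algebra; since the remaining conditions \meqref{eq:NP1} and \meqref{eq:NP2} required for a pre-Lie Poisson algebra are already built into the definition of a Novikov-Poisson algebra, nothing else is needed.

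For \meqref{it:PLie2}, the substantive task is to derive \meqref{eq:NP2} from the PreLie-Com axiom \meqref{eq:PLiecomm} together with \meqref{eq:NP1}. The plan is to apply \meqref{eq:PLiecomm} to both terms on the right-hand side of \meqref{eq:NP2}, giving
\begin{eqnarray*}
x\circ(y\mcdot z)-y\circ(x\mcdot z)
&=&\bigl((x\circ y)\mcdot z+y\mcdot(x\circ z)\bigr)-\bigl((y\circ x)\mcdot z+x\mcdot(y\circ z)\bigr)\\
&=&\bigl((x\circ y)\mcdot z-(y\circ x)\mcdot z\bigr)+\bigl(y\mcdot(x\circ z)-x\mcdot(y\circ z)\bigr).
\end{eqnarray*}
Thus \meqref{eq:NP2} holds once one shows the residual $y\mcdot(x\circ z)-x\mcdot(y\circ z)$ vanishes. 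For this I would apply \meqref{eq:NP1} twice, combined with the commutativity of $\cdot$: on one hand $y\mcdot(x\circ z)=(y\mcdot x)\circ z$, and on the other $x\mcdot(y\circ z)=(x\mcdot y)\circ z$, so the two agree by commutativity. This closes the argument.

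For \meqref{it:PLie3}, I would note that a differential Novikov-Poisson algebra satisfies the Novikov axioms (hence in particular \meqref{eq:PLie}, so $(L,\circ)$ is pre-Lie), the commutative associative axiom for $\cdot$, and \meqref{eq:PLiecomm}, so it is a PreLie-Com algebra; since it also satisfies \meqref{eq:NP1} by definition, part \meqref{it:PLie2} applies and yields the pre-Lie Poisson structure. I don't expect any genuine obstacle here; the only point requiring care is the symmetric use of \meqref{eq:NP1} with commutativity in part \meqref{it:PLie2}, which hinges on the fact that the left-multiplications of $(L,\cdot)$ and the right-multiplications by $z$ in $(L,\circ)$ intertwine in both variables.
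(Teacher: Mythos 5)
Your proposal is correct and follows essentially the same route as the paper: part (a) is the same observation that the Novikov axioms contain the pre-Lie identity, part (b) applies Eq.~\meqref{eq:PLiecomm} to both $x\circ(y\cdot z)$ and $y\circ(x\cdot z)$ and kills the residual $y\cdot(x\circ z)-x\cdot(y\circ z)$ via Eq.~\meqref{eq:NP1} and commutativity, exactly as in the paper's computation, and part (c) is the same reduction to part (b). No gaps.
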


\begin{proof}
\meqref{it:PLie1}. This follows since a Novikov algebra is a pre-Lie algebra and a
Novikov-Poisson algebra and a pre-Lie Poisson algebra have the same compatibility conditions for the two operations.

\smallskip

\noindent
\meqref{it:PLie2}.
Let $x,y,z\in L$. Then we have
\begin{eqnarray*}
&&y\circ (x\mcdot z)+(x\circ y)\mcdot z-(y\circ x)\mcdot z-x\circ
(y\mcdot z)\\
&&=(y\circ x)\mcdot z+x\mcdot (y\circ z)+(x\circ y)\mcdot z-(y\circ
x)\mcdot z -(x\circ y)\mcdot z-y\mcdot (x\circ z)\\
&&=x\mcdot (y\circ z)-y\mcdot (x\circ z)=(x\mcdot y)\circ z-(y\mcdot
x)\circ z=0.
\end{eqnarray*}
Hence $(L,\cdot,\circ)$ is a pre-Lie Poisson algebra.

\smallskip

\noindent \meqref{it:PLie3}. Note that the identity (\ref{eq:NP2})
in the definition of a differential Novikov-Poisson algebra is
redundant, since by Item.\meqref{it:PLie2},
 it can be derived from
Eqs.~(\ref{eq:PLiecomm}) and (\ref{eq:NP1}). Then the conclusion
follows.
\end{proof}

The commutator of a pre-Lie algebra is a Lie algebra~\mcite{Bu}. Observe that the proof of Theorem~\mref{thm:NPc} only uses the relations in Eqs.~\meqref{eq:NP1} and \meqref{eq:NP2}, which hold for pre-Lie algebras. Thus the same proof as that for Theorem~\mref{thm:NPc} gives

\begin{pro}\mlabel{pro:LPc}
Let $(L,\cdot,\circ)$ be a pre-Lie Poisson algebra. Then
$(L,\cdot,[\;,\;])$ is a \dualp algebra, where the operation $[\;,\;]$ is defined by Eq.~\meqref{eq:com}:
$$[x,y]=x\circ y-y\circ x,\;\; \forall x,y\in L.$$
\end{pro}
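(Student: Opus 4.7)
The plan is to mimic the proof of Theorem~\ref{thm:NPc} line by line, since, as the authors note, the Novikov identity~\eqref{eq:Novikov} is never invoked in that argument; only Eqs.~\eqref{eq:PLie}, \eqref{eq:NP1} and \eqref{eq:NP2} are used, and all three of these are built into the definition of a pre-Lie Poisson algebra. So the three things I need to check are: (a) $(L,\cdot)$ is commutative associative, which is hypothesis; (b) $(L,[\,,\,])$ is a Lie algebra; and (c) the transposed Leibniz rule~\eqref{eq:dualp}.

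For (b), skew-symmetry of $[x,y]=x\circ y-y\circ x$ is immediate. The Jacobi identity is the well-known fact that the commutator of a (left) pre-Lie algebra is a Lie algebra, and its proof uses only Eq.~\eqref{eq:PLie}; this is recalled in \mcite{Bu} and is already quoted in the preceding paragraph of the paper.

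For (c), I substitute the definition of $[\,,\,]$ into the expression $[x\mcdot z,y]+[x,y\mcdot z]-2z\mcdot[x,y]$ and obtain six $\circ$-terms together with the two $2z\mcdot(\cdots)$ terms. I then regroup them into two blocks:
\begin{align*}
& [x\mcdot z,y]+[x,y\mcdot z]-2z\mcdot[x,y]\\
&=\bigl((x\mcdot z)\circ y-z\mcdot(x\circ y)+z\mcdot(y\circ x)-(y\mcdot z)\circ x\bigr)\\
&\quad-\bigl((x\circ y)\mcdot z-(y\circ x)\mcdot z-x\circ(y\mcdot z)+y\circ(x\mcdot z)\bigr).
\end{align*}
Using commutativity of $\mcdot$ to rewrite $x\mcdot z=z\mcdot x$ and $y\mcdot z=z\mcdot y$, Eq.~\eqref{eq:NP1} forces the first block to vanish; Eq.~\eqref{eq:NP2} forces the second block to vanish. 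Hence the left-hand side is zero, which is exactly Eq.~\eqref{eq:dualp}.

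There is essentially no obstacle: the only point that might seem to require the Novikov axiom is the Jacobi identity for $[\,,\,]$, but as recalled above the commutator of any pre-Lie algebra is already Lie. All the remaining work is the same term-by-term cancellation that appeared in the proof of Theorem~\ref{thm:NPc}, so it is legitimate to simply write ``the same proof goes through'' and append the short verification above.
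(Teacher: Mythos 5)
Your proposal is correct and is essentially identical to the paper's own argument: the paper likewise observes that the proof of Theorem~\ref{thm:NPc} uses only Eqs.~\eqref{eq:PLie}, \eqref{eq:NP1} and \eqref{eq:NP2} (never the Novikov axiom~\eqref{eq:Novikov}), invokes \cite{Bu} for the fact that the commutator of a pre-Lie algebra is a Lie algebra, and concludes that the same term-by-term cancellation establishes Eq.~\eqref{eq:dualp}. Your explicit regrouping into the two blocks killed by Eqs.~\eqref{eq:NP1} and \eqref{eq:NP2} reproduces the computation in the proof of Theorem~\ref{thm:NPc} verbatim, so nothing further is needed.
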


\begin{ex}
Let $L$ be a 2-dimensional vector space with a basis $\{e_1,e_2\}$. Then $(L,\circ)$ is a pre-Lie algebra~\mcite{BM0,Bu0} with the non-zero product given by
$$e_1\circ e_1=e_1,\quad e_1\circ e_2=e_2.$$
This pre-Lie algebra is not a Novikov algebra~\mcite{BM}. By a straightforward computation, a commutative (not necessarily associative) algebra $(L,\cdot)$ satisfies
Eq.~(\mref{eq:NP1}) if and only if its non-zero product is given by
$$e_1\mcdot e_1=ae_2,$$
Moreover, with the
above product, $(L,\cdot)$ is associative and Eq.~(\mref{eq:NP2})
is satisfied. Therefore, with the above pre-Lie algebra product
and commutative associative algebra product, $(L,\cdot,\circ)$ is
a pre-Lie Poisson algebra. It is not a Novikov-Poisson algebra.
The induced \dualp algebra $(L,\cdot,[\;,\;])$, where the
operation $[\;,\;]$ is defined by Eq.~(\mref{eq:com}), is
isomorphic to the second \dualp algebra of dimension two with non-abelian Lie product in Section~\mref{ss:ex}.
\end{ex}

Combining Proposition~\mref{pro:LPc} and
Lemma~\mref{lem:PLie}, we have

\begin{cor}
Let $(L,\cdot,\circ)$ be a PreLie-Com algebra. If in addition, Eq.~\meqref{eq:NP1} holds, then $(L,\cdot,[\;,\;])$ is a \dualp
algebra, where the operation $[\;,\;]$ is defined by
Eq.~\meqref{eq:com}: $[x,y]:=x\circ y-y\circ x$ for all
$x,y\in L$. In particular, the conclusion holds when $(L,\cdot,\circ)$ is a differential Novikov-Poisson algebra.
\end{cor}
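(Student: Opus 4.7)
The plan is to obtain this corollary as an immediate consequence of Lemma~\mref{lem:PLie} and Proposition~\mref{pro:LPc}; no new computation is required, only a careful chaining of the hypotheses through the hierarchy of algebraic structures already introduced in this section.

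First I would invoke Lemma~\mref{lem:PLie}.\meqref{it:PLie2}, which asserts that a PreLie-Com algebra $(L,\cdot,\circ)$ satisfying Eq.~\meqref{eq:NP1} is already a pre-Lie Poisson algebra. This is the essential upgrade of the hypotheses, since the PreLie-Com axioms by themselves only provide the pre-Lie law for $\circ$, the commutative associative structure for $\cdot$, and the derivation-type compatibility~\meqref{eq:PLiecomm}; they do not directly supply Eq.~\meqref{eq:NP2}. The short calculation inside the proof of Lemma~\mref{lem:PLie}.\meqref{it:PLie2} is precisely what extracts that missing identity from~\meqref{eq:PLiecomm} together with~\meqref{eq:NP1}.

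Second, once $(L,\cdot,\circ)$ has been promoted to a pre-Lie Poisson algebra, Proposition~\mref{pro:LPc} applies verbatim and yields that $(L,\cdot,[\;,\;])$, with $[x,y]:=x\circ y-y\circ x$, is a \dualp algebra. For the final ``in particular'' clause I would cite Lemma~\mref{lem:PLie}.\meqref{it:PLie3}, which records that a differential Novikov-Poisson algebra is automatically a PreLie-Com algebra satisfying~\meqref{eq:NP1}; the main statement then specializes to cover this case without further argument.

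There is no real obstacle here: the substantive content sits in the two prior results, and the corollary is essentially a matter of packaging them in the correct order. The only point requiring any care is to observe that Eq.~\meqref{eq:NP1} must be imposed as an extra hypothesis for a general PreLie-Com algebra, whereas in the differential Novikov-Poisson case it is already built into the definition, which is why the ``in particular'' consequence is unconditional.
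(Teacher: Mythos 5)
Your proposal is correct and matches the paper exactly: the corollary is stated there as an immediate combination of Lemma~\mref{lem:PLie} (items \meqref{it:PLie2} and \meqref{it:PLie3}) with Proposition~\mref{pro:LPc}, with no further argument given. Your chaining of the hypotheses, including the observation that Eq.~\meqref{eq:NP1} is an extra hypothesis in general but automatic in the differential Novikov-Poisson case, is precisely the intended reasoning.
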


One of the motivations and intentions of introducing the notion of Novikov-Poisson algebras is so that the tensor product can be taken~\mcite{XuX}. We give the following similar property for pre-Lie Poisson algebras.

\begin{pro} \mlabel{pro:PLietensor} Suppose that $(L_1,\cdot,\circ)$ and $(L_2,\cdot,\circ)$ are
two pre-Lie Poisson algebras. Define two bilinear operations
$\cdot$ and $\circ$ on $L_1\otimes L_2$
respectively by Eq.~\meqref{eq:tensor1} and
\begin{equation}
(x_1\otimes x_2)\circ (y_1\otimes y_2)=x_1\circ y_1\otimes
x_2\cdot y_2+x_1\cdot y_1\otimes x_2\circ y_2,\;\;\forall
x_1,y_1\in L_1, x_2,y_2\in L_2.
\end{equation}
Then $(L_1\otimes L_2,\cdot,\circ)$ is a pre-Lie Poisson algebra.
\end{pro}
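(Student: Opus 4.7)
The plan is to verify the three defining axioms of a pre-Lie Poisson algebra for $(L_1\otimes L_2,\cdot,\circ)$ in order. The commutative associative law for $\cdot$ on $L_1\otimes L_2$ is immediate, inherited componentwise from $(L_1,\cdot)$ and $(L_2,\cdot)$.

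I would next verify Eq.~(\mref{eq:NP1}) for the tensor product, since this is the cleanest of the three nontrivial identities and sets up the computational pattern. After expanding $((x_1\otimes x_2)\mcdot(y_1\otimes y_2))\circ(z_1\otimes z_2)$ via the definition of $\circ$, the two resulting tensor summands convert term by term, by Eq.~(\mref{eq:NP1}) applied in each slot, into the expansion of $(x_1\otimes x_2)\mcdot((y_1\otimes y_2)\circ(z_1\otimes z_2))$. This step is direct substitution.

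The real work lies in the pre-Lie identity, Eq.~(\mref{eq:PLie}), for $\circ$ on $L_1\otimes L_2$. Writing $X=x_1\otimes x_2$, $Y=y_1\otimes y_2$, $Z=z_1\otimes z_2$, I would expand the associator $(X\circ Y)\circ Z - X\circ(Y\circ Z)$ into its eight tensor summands and group them into four families: two \emph{pure} families, in which both occurrences of $\circ$ fall in the same tensor slot, and two \emph{mixed} families, in which the summand of the expanded associator has a first factor of the form $(x_i\circ y_i)\mcdot z_i - x_i\circ(y_i\mcdot z_i)$ for $i\in\{1,2\}$. To show that the associator is symmetric under $X\leftrightarrow Y$, the two pure families are handled by pre-Lie applied separately in each $(L_i,\circ)$. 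For the mixed families, Eq.~(\mref{eq:NP2}) applied in the appropriate slot shows that the first tensor factor is already symmetric under $x_i\leftrightarrow y_i$; the accompanying second factor, after rewriting $x_j\mcdot(y_j\circ z_j)$ as $(x_j\mcdot y_j)\circ z_j$ via Eq.~(\mref{eq:NP1}), reduces to $(x_j\mcdot y_j)\circ z_j - (y_j\mcdot x_j)\circ z_j$, which vanishes by commutativity of $\mcdot$. The verification of Eq.~(\mref{eq:NP2}) for the tensor product proceeds in the same spirit: its two sides expand into matched pairs of tensors, and the matching is a direct application of Eq.~(\mref{eq:NP2}) in whichever component carries $\circ$, together with commutativity of $\mcdot$ in the other.

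The main obstacle is purely combinatorial bookkeeping in the pre-Lie step: each side of the associator identity carries eight tensor summands, and one must track which component carries $\circ$ versus $\mcdot$. The organizing principle that controls the complexity is that every contribution outside the two pure families collapses, through the interplay of Eqs.~(\mref{eq:NP1}) and~(\mref{eq:NP2}) with commutativity of $\mcdot$, to a difference of the form $(u\mcdot v)\circ w - (v\mcdot u)\circ w$, which is zero.
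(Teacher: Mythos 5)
Your proposal is correct and follows essentially the same route as the paper: a direct componentwise expansion verifying the pre-Lie identity, Eq.~\meqref{eq:NP1} and Eq.~\meqref{eq:NP2} on $L_1\otimes L_2$, using the corresponding identities in each tensor factor together with commutativity of $\cdot$. The only difference is organizational --- you check symmetry of the associator under $X\leftrightarrow Y$ family by family, whereas the paper expands the full four-term combination at once and cancels via Eqs.~\meqref{eq:PLie}, \meqref{eq:NP1} and \meqref{eq:NP2} --- but the underlying computation is identical.
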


\begin{proof}
As is well-known, $(L_1\otimes L_2,\cdot)$ is a commutative associative algebra. Let $x_1,y_1,z_1\in L_1,~x_2,y_2,z_2\in L_2$. Then we
have
\begin{eqnarray*}
&&((x_1\otimes x_2)\circ(y_1\otimes y_2))\circ(z_1\otimes z_2)
-((y_1\otimes y_2)\circ(x_1\otimes x_2))\circ(z_1\otimes z_2)\\
&&-(x_1\otimes x_2)\circ((y_1\otimes y_2)\circ(z_1\otimes z_2))
+(y_1\otimes y_2)\circ((x_1\otimes x_2)\circ(z_1\otimes z_2))\\
&=&((x_1\circ y_1)\circ z_1-(y_1\circ x_1)\circ z_1
-x_1\circ(y_1\circ z_1)+y_1\circ(x_1\circ z_1))\otimes x_2\mcdot
y_2\mcdot z_2\\
&&+((x_1\circ y_1)\mcdot z_1-(y_1\circ x_1)\mcdot
z_1)\otimes(x_\mcdot y_2)\circ z_2 -x_1\mcdot (y_1\circ z_1)\otimes
x_2\circ(y_2\mcdot z_2)\\
&&+y_1\mcdot (x_1\circ z_1)\otimes y_2\circ(x_2\mcdot z_2)+(x_1\mcdot
y_1)\circ z_1\otimes((x_2\circ y_2)\mcdot z_2-(y_2\circ x_2)\mcdot
z_2)\\
&& -x_1\circ(y_1\mcdot z_1)\otimes x_2\mcdot (y_2\circ
z_2)+y_1\circ(x_1\mcdot z_1)\otimes y_2\mcdot
(x_2\circ z_2)\\
&&+x_1\mcdot y_1\mcdot z_1\otimes((x_2\circ y_2)\circ z_2-(y_2\circ
x_2)\circ z_2 -x_2\circ(y_2\circ z_2)+y_2\circ(x_2\circ z_2))\\
&\overset{\textbf{(\mref{eq:PLie})}}{=}&((x_1\circ y_1)\mcdot
z_1-(y_1\circ x_1)\mcdot z_1)\otimes(x_2\mcdot y_2)\circ z_2
-x_1\mcdot (y_1\circ z_1)\otimes x_2\circ(y_2\mcdot z_2)\\
&&+y_1\mcdot (x_1\circ z_1)\otimes y_2\circ(x_2\mcdot z_2)
+(x_1\mcdot y_1)\circ z_1\otimes((x_2\circ y_2)\mcdot z_2-(y_2\circ
x_2)\mcdot z_2) \\
&&-x_1\circ(y_1\mcdot z_1)\otimes x_2\mcdot (y_2\circ
z_2)+y_1\circ(x_1\mcdot z_1)\otimes y_2\mcdot (x_2\circ z_2)\\
&\overset{\textbf{(\mref{eq:NP1})}}{=}&((x_1\circ y_1)\mcdot
z_1-(y_1\circ x_1)\mcdot z_1)\otimes(x_2\mcdot y_2)\circ z_2
-(x_1\mcdot y_1)\circ z_1\otimes(x_2\circ(y_2\mcdot
z_2)-y_2\circ(x_2\mcdot z_2))\\
&&+(x_1\mcdot y_1)\circ z_1\otimes((x_2\circ y_2)\mcdot
z_2-(y_2\circ x_2)\mcdot z_2) -(x_1\circ(y_1\mcdot
z_1)-y_1\circ(x_1\mcdot z_1))\otimes(x_2\mcdot y_2)\circ z_2\\
&=&((x_1\circ y_1)\mcdot z_1-(y_1\circ x_1)\mcdot
z_1-x_1\circ(y_1\mcdot z_1)+y_1\circ(x_1\mcdot z_1))\otimes(x_2\mcdot
y_2)\circ z_2\\
&&+(x_1\mcdot y_1)\circ z_1\otimes((x_2\circ y_2)\mcdot
z_2-(y_2\circ x_2)\mcdot z_2-x_2\circ(y_2\mcdot
z_2)+y_2\circ(x_2\mcdot z_2))\\
&
\overset{\textbf{(\mref{eq:NP2})}}{=}&0.
\end{eqnarray*}
Therefore, $(L_1\otimes L_2,\circ)$ is a pre-Lie algebra. Moreover,
we have
\begin{eqnarray*}
&&((x_1\otimes x_2)\mcdot(y_1\otimes
y_2))\circ(z_1\otimes z_2)\\
 &=&(x_1\mcdot y_1)\circ z_1\otimes
x_2\mcdot y_2\mcdot z_2+x_1\mcdot y_1\mcdot z_1\otimes(x_2\mcdot
y_2)\circ z_2\\
&\overset{\textbf{(\mref{eq:NP1})}}{=}&x_1\mcdot (y_1\circ
z_1)\otimes x_2\mcdot (y_2\mcdot z_2)+x_1\mcdot (y_1\mcdot z_1)\otimes
x_2\mcdot (y_2\circ z_2) \\
&=&(x_1\otimes x_2)\mcdot((y_1\otimes
y_2)\circ(z_1\otimes z_2)).
\end{eqnarray*}
\begin{eqnarray*}
&&((x_1\otimes x_2)\circ(y_1\otimes y_2))\mcdot(z_1\otimes z_2)-
((y_1\otimes y_2)\circ(x_1\otimes x_2))\mcdot(z_1\otimes z_2)\\
&&-(x_1\otimes x_2)\circ((y_1\otimes y_2)\mcdot(z_1\otimes z_2))+
(y_1\otimes y_2)\circ((x_1\otimes x_2)\mcdot(z_1\otimes z_2))\\
&=&((x_1\circ y_1)\mcdot z_1-(y_1\circ x_1)\mcdot
z_1-x_1\circ(y_1\mcdot z_1)+y_1\circ(x_1\mcdot z_1))\otimes x_2\mcdot
y_2\mcdot z_2\\
&&+x_1\mcdot y_1\mcdot z_1\otimes((x_2\circ y_2)\mcdot z_2-(y_2\circ
x_2)\mcdot z_2-x_2\circ(y_2\mcdot z_2)+y_2\circ(x_2\mcdot
z_2))\\
&\overset{\textbf{(\mref{eq:NP2})}}{=}&0.
\end{eqnarray*}
Hence $(L_1\otimes L_2,\cdot,\circ)$ is a pre-Lie
Poisson algebra.
\end{proof}

\section{\Dualp algebras and 3-Lie algebras}
\mlabel{sec:3lie} In this section we present the close
relationship between \dualp algebras and 3-Lie algebras. In one
direction, we show that a \dualp algebra with a derivation gives
rise to a \dualp 3-Lie algebra. A similar construction works for a
\dualp algebra with an involutive endomorphism. In the other direction, the 3-Lie algebra
derived from a strong Poisson algebra
in~\mcite{Dz} is shown to carry a \dualp 3-Lie
algebra structure, instead of a Poisson 3-Lie algebra one.

\subsection{Construction of 3-Lie algebras from \dualp algebras with derivations}

We first recall the notion of a 3-Lie algebra.
\begin{defi}{\rm\mcite{Filippov}}
  A {\bf 3-Lie algebra}  is a vector space $A$ together with a skew-symmetric linear map ($3$-Lie bracket) $[\;,\;,\;]:
\otimes^3 A\rightarrow A$ such that the following {\bf Fundamental Identity (FI)} holds:
\begin{equation}\mlabel{eq:de13Lie}
[[x,y,z],u,v]=[[x,u,v],y,z]+[[y,u,v],z,x]+[[z,u,v],x,y],
\end{equation}
for $x, y, z, u, v\in A$.
\end{defi}
In other words, the adjoint action of $u,v$ in the ternary operation is a derivation.
Given the importance of Poisson algebras and 3-Lie algebras, it is
natural to study the relationship between them. As it turned out, an extra condition is needed for this to work.

\begin{defi}
\mlabel{de:strongP}
{\rm \mcite{Dz}}\quad A Poisson algebra $(L,\cdot,[\;,\;])$ is
called {\bf strong} if
\begin{equation}
[h,x]\mcdot [y,z]+[h,y]\mcdot [z,x]+[h,z]\mcdot [x,y]=0,\;\;\forall
x,y,z,h\in L.
\mlabel{eq:strong}
\end{equation}
\end{defi}

For example, the Poisson algebra $(L,\cdot,[\;,\;])$ in
Example~\mref{ex:Poi} is strong.

We observe that Eq~\meqref{eq:strong} is just Eq.~(\mref{eq:gi4}) in Theorem~\mref{thm:id}, hence holds for all \dualp algebras.
As a 3-Lie variation of Poisson algebras, we have

\begin{defi}\mcite{Dz}\label{defi:3LieP}
 A {\bf Poisson 3-Lie algebra} is a
vector space $L$ with a bilinear operator $\mcdot$ and a ternary
operator $[\;,\;,\;]$ such that $(L,\cdot)$ is a commutative
associative algebra, $(L, [\;,\;,\;])$ is a 3-Lie algebra and the
following equation holds.
\begin{equation}\mlabel{eq:3LiePoi}
[x,y,u\mcdot v]=u\mcdot [x,y,v]+[x,y,u]\mcdot v,\;\;\forall
x,y,u,v\in L.
\end{equation}
Thus the adjoint action of $x,y$ on the commutative associative product is a derivation.

In addition, a Poisson 3-Lie algebra $(L,\cdot,[\;,\;,\;])$ is
called {\bf strong} if the following equation holds.
\small{
\begin{equation}\mlabel{eq:strong-3}
-[x,y,u_1]\mcdot [u_2,u_3,u_4]+[x,y,u_2]\mcdot
[u_1,u_3,u_4]-[x,y,u_3]\mcdot [u_1,u_2,u_4]+[x,y,u_4]\mcdot
[u_1,u_2,u_3]=0,
\end{equation}
}
for all $x,y,u_1,u_2,u_3,u_4\in L$. \mlabel{de:3LPa}
\end{defi}

\begin{pro}{\rm \mcite{Dz}}
Let $(L,\cdot, [\;,\;])$ be a strong Poisson algebra.
Suppose that $D$ is a derivation of $(L,\cdot)$ and
$(L,[\;,\;])$. Define a ternary operation by
\begin{equation}
[x,y,z]:=D(x)\mcdot[y,z]+D(y)\mcdot [z,x]+D(z)\mcdot [x,y], \quad \forall x,y,z\in L.
\mlabel{eq:3-Lie2}
\end{equation}
Then $(L,\cdot,[\;,\;,\;])$ is a strong Poisson 3-Lie algebra.
\mlabel{pro:cons1}
\end{pro}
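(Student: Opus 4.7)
The plan is to verify in turn the four ingredients of being a strong Poisson 3-Lie algebra as in Definition~\ref{de:3LPa}: skew-symmetry of the ternary bracket, the Fundamental Identity \meqref{eq:de13Lie}, the Leibniz-type compatibility \meqref{eq:3LiePoi}, and the strongness identity \meqref{eq:strong-3}. The commutative associativity of $(L,\cdot)$ is part of the hypothesis, so no work is needed there.

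\textbf{Skew-symmetry} of $[\;,\;,\;]$ is immediate: the defining formula \meqref{eq:3-Lie2} is a cyclic sum in $x,y,z$ built from the skew-symmetric Poisson bracket, so any transposition of two arguments flips the overall sign.

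\textbf{The Leibniz-type identity} \meqref{eq:3LiePoi} I would verify by a direct expansion. Write out $[x,y,u\cdot v]$ using \meqref{eq:3-Lie2}, then apply the derivation property $D(u\cdot v)=D(u)\cdot v+u\cdot D(v)$ in one term and the Poisson Leibniz rule \meqref{eq:LR} to the brackets $[y,u\cdot v]$ and $[u\cdot v,x]$ in the other two terms. After regrouping by the factors $u$ and $v$, what remains are exactly $u\cdot [x,y,v]+[x,y,u]\cdot v$. This step uses no hypothesis beyond the Poisson axioms and the assumption that $D$ is a derivation of $(L,\cdot)$.

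\textbf{The Fundamental Identity} \meqref{eq:de13Lie} is the main obstacle and the place where the full force of the hypotheses will be needed. Expanding $[[x,y,z],u,v]$ and each of the three terms $[[x,u,v],y,z]$, $[[y,u,v],z,x]$, $[[z,u,v],x,y]$ on the right by \meqref{eq:3-Lie2} produces a long sum of monomials each involving one $D$ applied to a quadratic polynomial in $D$'s and Poisson brackets. I would organize the terms by their ``shape'', namely by the number of $D$'s appearing on each argument, and then simplify: terms of the form $D^2(\cdot)\cdot[\cdot,\cdot]\cdot[\cdot,\cdot]$ come from applying $D$ to a Poisson bracket on the left, and they cancel in bulk using the fact that $D$ is a derivation of $[\;,\;]$; terms with three $D$'s match by Jacobi once one uses Leibniz to push $D$'s past brackets; the remaining obstructions are quartic expressions in $D(\,\cdot\,)$ times products of two Poisson brackets, and these are precisely killed by the strongness identity \meqref{eq:strong} applied with appropriate arguments (this is where ``strong'' is used). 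I expect this term-chase to be lengthy but essentially mechanical; the crucial conceptual input is that strongness was designed to make exactly this cancellation work.

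\textbf{The strong condition} \meqref{eq:strong-3} for the constructed 3-Lie algebra follows from a similar but shorter expansion: substituting \meqref{eq:3-Lie2} into each $[x,y,u_i]$ and each $[u_j,u_k,u_l]$ produces an alternating sum of products $D(\cdot)\cdot[\cdot,\cdot]\cdot D(\cdot)\cdot[\cdot,\cdot]$ over the four indices $u_1,\dots,u_4$. Collecting by which $u_i$ carries the $D$ in the first factor and applying strongness of $(L,\cdot,[\;,\;])$ to the quartets in the indices $u_1,\dots,u_4$ (together with the Jacobi-like identity \meqref{eq:gi1} to rewrite the cyclic pieces) reduces the whole expression to zero. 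The main obstacle throughout will be bookkeeping, so I would set up an explicit labeling of the expanded terms before attempting the cancellations, and reuse the cancellation pattern from the FI proof wherever possible.
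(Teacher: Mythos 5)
The first thing to say is that the paper offers no proof of Proposition~\mref{pro:cons1}: the statement is quoted from~\mcite{Dz} with a citation and no argument, so there is no in-paper proof to compare yours against. (The closest analogue that \emph{is} proved in the paper is Theorem~\mref{thm:3Lie}, the transposed-Poisson counterpart, whose proof is a full term-by-term expansion of the Fundamental Identity organized into labelled blocks $(A_1),\dots,(A_8)$.) Judged on its own, your proposal is a viable plan rather than a proof. The two parts you actually carry out are fine: skew-symmetry is immediate, and your verification of \meqref{eq:3LiePoi} is correct and complete --- expanding $[x,y,u\cdot v]$, applying $D(u\cdot v)=D(u)\cdot v+u\cdot D(v)$ and the Leibniz rule \meqref{eq:LR} to $[y,u\cdot v]$ and $[u\cdot v,x]$, and regrouping by $u$ and $v$ gives exactly $v\cdot[x,y,u]+u\cdot[x,y,v]$, with no strongness needed.

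For the Fundamental Identity and for \meqref{eq:strong-3}, however, you only describe the bookkeeping, and the description mislocates where the hypotheses enter. Every term in the expansion carries exactly two $D$'s (there are no ``terms with three $D$'s''), and the terms $D^2(\cdot)\cdot[\cdot,\cdot]\cdot[\cdot,\cdot]$ do \emph{not} cancel via $D$ being a derivation of the bracket: the $D^2(x),D^2(y),D^2(z)$ contributions match across the two sides by inspection, while the leftover $D^2(u)\cdot\bigl([v,x][y,z]+[v,y][z,x]+[v,z][x,y]\bigr)$ and its $D^2(v)$ companion vanish only by the strongness identity \meqref{eq:strong} with $h=v$ (resp.\ $h=u$). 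Strongness is needed a second time, applied with $h=D(u)$ and $h=D(v)$, to kill the mixed terms of the form $D(x)\cdot[D(u),\cdot]\cdot[\cdot,\cdot]$; the double-bracket terms $D(\alpha)\cdot D(\beta)\cdot[\gamma,[\delta,\varepsilon]]$ are the ones handled by the Jacobi identity. I checked that with this corrected organization the cancellation does close, so your strategy works; but as written the FI is not yet proved, and the verification of \meqref{eq:strong-3} is likewise only promised. If this is meant as a proof rather than a plan, you need to write out at least the FI computation in the style of the paper's proof of Theorem~\mref{thm:3Lie}.
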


Let $(L,\cdot)$ be a commutative associative algebra and $D_1,D_2$
be two commutating derivations. Let $(L,[\;,\;])$ be the Lie
algebra defined by Eq.~(\mref{eq:2der}) through $D_1,D_2$ in
Example~\mref{ex:Poi}.
Let $D_3$ be a third derivation of $(L,\cdot)$ commuting with
$D_1$ and $D_2$. It is straightforward to show that $D_3$ is a
derivation of $(L,[\;,\;])$. Hence there is a 3-Lie algebra defined by Eq.~(\mref{eq:3-Lie2}).
Explicitly,
\begin{equation}[x,y,z]:=\det\left( \begin{matrix}  D_1(x) & D_1(y) & D_1(z)\cr D_2(x) & D_2(y) & D_2(z)\cr D_3(x) & D_3(y) &D_3(z)\end{matrix}\right)
=(D_1\wedge D_2\wedge D_3)(x,y,z),\;\;\forall x,y,z\in L.
\end{equation}
This is one of the first and also one of the most important examples of 3-Lie algebras~\mcite{N,T}.

As we will see (Theorem~\mref{thm:3Lie} and Proposition~\mref{pro:3Liepoi}), the strongness condition on the construction of 3-Lie Poisson algebras from Poisson algebras in Eq.~\meqref{eq:3-Lie2} can be removed when the equation is used to construct \dualp 3-Lie algebras from \dualp algebras.

\begin{thm}\mlabel{thm:3Lie} Let $(L,\cdot, [\;,\;])$ be a \dualp algebra and let $D$ be a derivation of $(L,\cdot)$ and $(L,[\;,\;])$. Define a ternary operation on $L$ by
Eq.~\meqref{eq:3-Lie2}:
\begin{equation}
[x,y,z]:=D(x)\mcdot [y,z]+D(y)\mcdot [z,x]+D(z)\mcdot [x,y], \;\;x,y,z,\in
L.\mlabel{eq:3-Lie}
\end{equation}
Then $(L,[\;,\;,\;])$ is a 3-Lie algebra.
\end{thm}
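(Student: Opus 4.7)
The plan is to verify the two defining axioms of a 3-Lie algebra: skew-symmetry of the ternary bracket, and the Fundamental Identity (FI) \meqref{eq:de13Lie}. Skew-symmetry is essentially immediate: writing \meqref{eq:3-Lie} as the cyclic sum $[x,y,z]=\sum_{\mathrm{cyc}}D(x)\mcdot[y,z]$, each summand is skew in the two arguments inside $[\;,\;]$, so swapping any two of $x,y,z$ reverses the entire sum. Hence $[\;,\;,\;]$ is totally skew-symmetric.

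The remaining task is to check the FI
$$[[x,y,z],u,v]=[[x,u,v],y,z]+[[y,u,v],z,x]+[[z,u,v],x,y].$$
I would proceed by fully expanding both sides using \meqref{eq:3-Lie}. On the left, I would first compute $D([x,y,z])$ using the hypothesis that $D$ is a derivation of both $\mcdot$ and $[\;,\;]$, producing terms involving $D^2$ and pairs of $D$'s, and then expand $[[x,y,z],u,v]=D([x,y,z])\mcdot[u,v]+D(u)\mcdot[v,[x,y,z]]+D(v)\mcdot[[x,y,z],u]$ using the definition together with transposed Leibniz rule \meqref{eq:dualp} to move the $D$-prefactors past the outer Lie brackets. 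On the right, each term $[[a,u,v],b,c]$ is expanded analogously. The result on each side is a long polynomial in the operations $D$, $\mcdot$ and $[\;,\;]$, homogeneous of total degree 2 in $D$.

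The matching is then organized by the ``$D$-pattern'' (where the two derivations land: one $D^2$ versus two $D$'s, and on which of the variables $x,y,z,u,v$), with each pattern providing a subset of terms that must cancel. The cancellations are effected by the identities of Theorem~\mref{thm:id}: the transposed Leibniz rule \meqref{eq:dualp} to interchange which factor carries a commutative associative multiplication; the Jacobi identity together with the cyclic identities \meqref{eq:gi1}, \meqref{eq:gi2}, \meqref{eq:gi3} to collapse terms of the form ``element times nested Lie bracket'' and ``nested Lie bracket with a scalar factor inside''; the identity \meqref{eq:gi5} to pair up terms of shape $[au,bv]+[av,bu]$; and crucially the identity \meqref{eq:gi4} to kill residual sums of products of two Lie brackets.

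The main obstacle is not conceptual but the combinatorial bookkeeping: each side produces on the order of thirty terms after expansion, and organizing them so that the identities of Theorem~\mref{thm:id} apply in the right groupings is the substance of the work. The conceptual point to emphasize is that \meqref{eq:gi4} is precisely Dzhumadildaev's \emph{strongness} condition \meqref{eq:strong} for a Poisson algebra, which in the Poisson case must be imposed as an extra hypothesis (as in Proposition~\mref{pro:cons1}) but in the \dualp setting is automatic by Theorem~\mref{thm:id}. This is what allows the strongness assumption to be dropped in the present theorem, and it is exactly the step at which the construction would break down for a non-strong Poisson algebra.
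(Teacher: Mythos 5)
Your proposal is correct and follows essentially the same route as the paper: full expansion of both sides of the Fundamental Identity, grouping of the resulting terms, and cancellation via the identities of Theorem~\mref{thm:id}, with your observation that \meqref{eq:gi4} is precisely the automatic strongness condition being exactly the conceptual point the paper exploits. The only bookkeeping details you do not anticipate are that the paper's execution also relies on \meqref{eq:gi6} and on an auxiliary identity rewriting the cyclic sum $\sum_{\mathrm{cyc}}D(x)\cdot D([y,z])$ as $-\sum_{\mathrm{cyc}}x\cdot[D(y),D(z)]$ (itself a consequence of the Leibniz rule for $D$ and \meqref{eq:dualp}), whereas \meqref{eq:gi3} and \meqref{eq:gi5} turn out not to be needed.
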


\begin{proof}
Let $x,y,z,u,v\in L$. First we have
\begin{eqnarray}
&&D(x)\mcdot D([y,z])+D(y)\mcdot D([z,x])+D(z)\mcdot D([x,y]) \notag\\
&=&-x\mcdot [D(y),D(z)]-y\mcdot [D(z),D(x)]-z\mcdot
[D(x),D(y)],
\mlabel{eq:identity}
\end{eqnarray}
since by the Leibniz rule of $D$ and Eq.~\meqref{eq:dualp}, both sides equal to the one-half of
$$[D(y),D(x)\mcdot z]+[D(y)\mcdot z, D(x)]+[D(z),D(y)\mcdot x]+[D(z)\mcdot x, D(y)]+[D(z)\mcdot D(x),y]+[D(x)\mcdot y, D(z)].
$$

Furthermore, expanding by Eq.~\eqref{eq:3-Lie} and then applying Eq.~\meqref{eq:identity}, we have
{\small
\begin{eqnarray*}
&&[[x,y,z],u,v]\\
&&=[D(x)\mcdot [y,z],u,v]+[D(y)\mcdot [z,x],u,v]+[D(z)\mcdot
[x,y],u,v]\\
&&=D(D(x)\mcdot [y,z])\mcdot [u,v]+D(u)\mcdot [v,D(x)\mcdot
[y,z]]+D(v)\mcdot [D(x)\mcdot [y,z],u]\\
&&+D(D(y)\mcdot [z,x])\mcdot [u,v]+D(u)\mcdot [v,D(y)\mcdot
[z,x]]+D(v)\mcdot [D(y)\mcdot [z,x],u]\\
&&+D(D(z)\mcdot [x,y])\mcdot [u,v]+D(u)\mcdot [v,D(z)\mcdot
[x,y]]+D(v)\mcdot [D(z)\mcdot [x,y],u]\\
&&=D^2(x)\mcdot [y,z]\mcdot [u,v]+D(x)\mcdot D([y,z])\mcdot
[u,v]+D(u)\mcdot [v,D(x)\mcdot [y,z]]+D(v)\mcdot [D(x)\mcdot
[y,z],u]\\
&&+D^2(y)\mcdot [z,x]\mcdot [u,v]+D(y)\mcdot D([z,x])\mcdot [u,v]+D(u)\mcdot [v,D(y)\mcdot [z,x]]+D(v)\mcdot [D(y)\mcdot [z,x],u]\\
&&+D^2(z)\mcdot [x,y]\mcdot [u,v]+D(z)\mcdot D([x,y])\mcdot
[u,v]+D(u)\mcdot [v,D(z)\mcdot [x,y]]+D(v)\mcdot [D(z)\mcdot
[x,y],u]\\
&&=D^2(x)\mcdot [y,z]\mcdot [u,v]-x\mcdot [D(y),D(z)]\mcdot
[u,v]+D(u)\mcdot [v,D(x)\mcdot [y,z]] -D(v)\mcdot [u,D(x)[y,z]]\\
&&+D^2(y)\mcdot [z,x]\mcdot [u,v]-y\mcdot [D(z),D(x)]\mcdot
[u,v]+D(u)\mcdot [v,D(y)\mcdot [z,x]]-D(v)\mcdot [u,D(y)\mcdot
[z,x]]\\
&&+D^2(z)\mcdot[x,y]\mcdot[u,v]-z\mcdot
[D(x),D(y)]\mcdot[u,v]+D(u)\mcdot[v,D(z)\mcdot[x,y]]-D(v)\mcdot[u,D(z)\mcdot[x,y]].
\end{eqnarray*}
}
Similarly,
{\small
\begin{eqnarray*}
&&[[x,u,v],y,z]\\
&&=D^2(x)\mcdot[u,v]\mcdot[y,z]-x\mcdot [D(u),D(v)]\mcdot
[y,z]+D(y)\mcdot[z,D(x)\mcdot [u,v]]+D(z)\mcdot [D(x)\mcdot [u,v],y]\\
&&+D^2(u)\mcdot[v,x]\mcdot[y,z]-u\mcdot[D(v),D(x)]\mcdot[y,z]+D(y)\mcdot[z,D(u)\mcdot[v,x]]+D(z)\mcdot[D(u)\mcdot
[v,x],y]\\
&&+D^2(v)\mcdot[x,u]\mcdot[y,z]-v\mcdot[D(x),D(u)]\mcdot[y,z]+D(y)\mcdot
[z,D(v)\mcdot[x,u]]+D(z)\mcdot[D(v)\mcdot[x,u],y].
\end{eqnarray*}
\begin{eqnarray*}
&&[[y,u,v],z,x]\\
&&=D^2(y)\mcdot[u,v]\mcdot[z,x]-y\mcdot [D(u),D(v)]\mcdot
[z,x]+D(z)\mcdot [x,D(y)\mcdot[u,v]]+D(x)\mcdot [D(y)\mcdot [u,v],z]\\
&&+D^2(u)\mcdot [v,y]\mcdot [z,x]-u\mcdot [D(v),D(y)]\mcdot
[z,x]+D(z)\mcdot[x,D(u)\mcdot[v,y]]+D(x)\mcdot[D(u)\mcdot[v,y],z]\\
&&+D^2(v)\mcdot [y,u]\mcdot[z,x]-v\mcdot
[D(y),D(u)]\mcdot[z,x]+D(z)\mcdot[x,D(v)\mcdot[y,u]]+D(x)\mcdot[D(v)\mcdot[y,u],z].
\end{eqnarray*}
\begin{eqnarray*}
&&[[z,u,v],x,y]\\
&&=D^2(z)\mcdot [u,v]\mcdot
[x,y]-z\mcdot[D(u),D(v)]\mcdot[x,y]+D(x)\mcdot[y,D(z)\mcdot[u,v]]+D(y)\mcdot[D(z)\mcdot[u,v],x]\\
&&+D^2(u)\mcdot[v,z]\mcdot [x,y]-u\mcdot[D(v),D(z)]\mcdot[x,y]+D(x)\mcdot[y,D(u)\mcdot[v,z]]+D(y)\mcdot[D(u)\mcdot[v,z],x]\\
&&+D^2(v)\mcdot[z,u]\mcdot
[x,y]-v\mcdot[D(z),D(u)]\mcdot[x,y]+D(x)\mcdot[y,D(v)\mcdot[z,u]]+D(y)\mcdot[D(v)\mcdot[z,u],x].
\end{eqnarray*}
}
Therefore we rewrite
$$[[x,u,v],y,z]+[[y,u,v],z,x]+[[z,u,v],x,y]=\sum_{i=1}^8 (A_i),$$
where

\begin{eqnarray*}
(A_1)&:=&D^2(x)\mcdot[u,v]\mcdot[y,z]+D^2(y)\mcdot[u,v]\mcdot[z,x]+D^2(z)\mcdot
[u,v]\mcdot [x,y],\\
(A_2)&:=&D^2(u)\mcdot[v,x]\mcdot[y,z]+D^2(u)\mcdot [v,y]\mcdot
[z,x]+D^2(u)\mcdot[v,z]\mcdot
[x,y]\\&&+D^2(v)\mcdot[x,u]\mcdot[y,z]+D^2(v)\mcdot
[y,u]\mcdot[z,x]+D^2(v)\mcdot[z,u]\mcdot [x,y],\\
(A_3)&:=&-x\mcdot [D(u),D(v)]\mcdot [y,z]-y\mcdot [D(u),D(v)]\mcdot
[z,x]-z\mcdot[D(u),D(v)]\mcdot[x,y],\\
(A_4)&:=&-u\mcdot[D(v),D(x)]\mcdot[y,z]-u\mcdot [D(v),D(y)]\mcdot
[z,x]-u\mcdot[D(v),D(z)]\mcdot[x,y],\\
(A_5)&:=&-v\mcdot[D(x),D(u)]\mcdot[y,z]-v\mcdot
[D(y),D(u)]\mcdot[z,x]-v\mcdot[D(z),D(u)]\mcdot[x,y],\\
(A_6)&:=& D(x)\mcdot[D(u)\mcdot[v,y],z]+D(x)\mcdot[y,D(u)\mcdot[v,z]]+
D(y)\mcdot[z,D(u)\mcdot[v,x]]\\
&&+D(y)\mcdot[D(u)\mcdot[v,z],x]+D(z)\mcdot[D(u)\mcdot
[v,x],y]+D(z)\mcdot[x,D(u)\mcdot[v,y]]\\
(A_7)&:=&D(x)\mcdot[D(v)\mcdot[y,u],z]+D(x)\mcdot[y,D(v)\mcdot[z,u]]+
D(y)\mcdot [z,D(v)\mcdot[x,u]]\\
&&+D(y)\mcdot[D(v)\mcdot[z,u],x]+D(z)\mcdot[D(v)\mcdot[x,u],y]+D(z)\mcdot[x,D(v)\mcdot[y,u]]\\
(A_8)&:=&(D(x)\mcdot[y,D(z)\mcdot[u,v]]+D(z)\mcdot
[D(x)\mcdot
[u,v],y])+(D(y)\mcdot[z,D(x)\mcdot [u,v]]\\
&&+D(x)\mcdot [D(y)\mcdot [u,v],z])+(D(z)\mcdot
[x,D(y)\mcdot[u,v]]+D(y)\mcdot[D(z)\mcdot[u,v],x]).
\end{eqnarray*}

By Eq.~(\mref{eq:gi4}), we have $(A_2)=0$ and by
Eq.~(\mref{eq:gi1}), we have $(A_3)=0$.

By Eq.~(\mref{eq:gi2}), we have
\begin{eqnarray*}
(A6)&=&D(x)\mcdot [v,D(u)\mcdot [y,z]]+D(y)\mcdot [v,D(u)\mcdot
[z,x]]+D(z)\mcdot [v,D(u)\mcdot [x,y]]\\
 (A7)&=&-D(x)\mcdot [u,D(v)\mcdot [y,z]]-D(y)\mcdot [u,D(v)\mcdot [z,x]]-D(z)\mcdot [u,D(v)\mcdot
 [x,y]].
\end{eqnarray*}

Applying Eq.~\meqref{eq:gi2} and then Eq.~\meqref{eq:gi6}, we obtain
$$-u[v,x][y,z]+x[v[y,u],z]+x[y,v[z,u]] = -u[v,x][y,z]-x[v[z,y],u]= -v[u,x[y,z]].
$$
Replacing $x$ and $v$ by $D(x)$ and $D(v)$ respectively, this gives

\begin{equation}
-u[D(v),D(x)][y,z] +D(x)[D(v)[y,u],z] + D(x)[y,D(v)[z,u]]
=-D(v)[u,D(x)[y,z]].
\mlabel{eq:imp}
\end{equation}

From Eq.~(\mref{eq:imp}), we obtain
\begin{eqnarray*}
(A_4)+(A_7)&=&-D(v)\mcdot [u,D(x)[y,z]]-D(v)\mcdot [u,D(y)\mcdot
[z,x]]-D(v)\mcdot[u,D(z)\mcdot[x,y]],\\
(A_5)+(A_6)&=&D(u)\mcdot [v,D(x)\mcdot [y,z]]+D(u)\mcdot [v,D(y)\mcdot
[z,x]]+D(u)\mcdot[v,D(z)\mcdot[x,y]],
\end{eqnarray*}
and from Eq.~\meqref{eq:gi6}, we obtain
$$
(A_8)=-x\mcdot [D(y),D(z)]\mcdot [u,v]-y\mcdot [D(z),D(x)]\mcdot
[u,v]-z\mcdot [D(x),D(y)]\mcdot[u,v].
$$
In summary,
$$[[x,y,z],u,v]=(A_1)+(A_4)+(A_5)+(A_6)+(A_7)+(A_8)=\sum_{i=1}^8(A_i).$$
Therefore $(L,[\;,\;,\;])$ is a 3-Lie algebra.
\end{proof}

As a direct consequence of Theorem~\mref{thm:3Lie}, we obtain the following construction of 3-Lie algebras in~\mcite{Dz} (see also Example~\mref{ex:Dz}).

\begin{cor}
Let $(L,\cdot)$ be a commutative associative algebra and $D_1,D_2$ be two commutating derivations. Then there exists a 3-Lie algebra defined by
\begin{eqnarray}
[x,y,z]&:=&D_2(x)\mcdot (y\mcdot D_1(z)-z\mcdot D_1(x))+D_2(y)\mcdot (z\mcdot D_1(x)-x\mcdot D_1(z))\nonumber\\
&\mbox{}&+D_2(z)\mcdot(x\mcdot D_1(y)-y\mcdot D_1(x)),\;\;\forall x,y,z\in L.\mlabel{eq:3Lie-CA}
\end{eqnarray}
\mlabel{co:3Lie-CA}
\end{cor}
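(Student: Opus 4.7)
The plan is to obtain this corollary as a straightforward application of Theorem~\mref{thm:3Lie}, using the \dualp algebra structure built from one of the two commuting derivations.

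First, I would invoke Proposition~\mref{pp:derdual} (equivalently Corollary~\mref{co:dualpSG}) with the derivation $D_1$ to equip $(L,\cdot)$ with the Lie bracket
\begin{equation*}
[x,y]:=x\mcdot D_1(y)-D_1(x)\mcdot y=x\mcdot D_1(y)-y\mcdot D_1(x),\qquad \forall\, x,y\in L,
\end{equation*}
so that $(L,\cdot,[\;,\;])$ becomes a \dualp algebra. Next I would check that the other derivation $D_2$ is simultaneously a derivation of the commutative associative product (by hypothesis) and of this Lie bracket. The latter is a direct Leibniz-rule computation:
\begin{align*}
D_2([x,y])
&=D_2(x)\mcdot D_1(y)+x\mcdot D_2D_1(y)-D_2(y)\mcdot D_1(x)-y\mcdot D_2D_1(x),\\
[D_2(x),y]+[x,D_2(y)]
&=D_2(x)\mcdot D_1(y)-y\mcdot D_1D_2(x)+x\mcdot D_1D_2(y)-D_2(y)\mcdot D_1(x),
\end{align*}
which coincide precisely because $D_1D_2=D_2D_1$. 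This commutation hypothesis is therefore exactly what is needed; no analog of Jacobi-type identities has to be verified by hand.

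With $D_2$ confirmed as a derivation of both operations of the \dualp algebra $(L,\cdot,[\;,\;])$, Theorem~\mref{thm:3Lie} applies directly and yields a 3-Lie algebra on $L$ with ternary bracket
\begin{equation*}
[x,y,z]=D_2(x)\mcdot [y,z]+D_2(y)\mcdot [z,x]+D_2(z)\mcdot [x,y].
\end{equation*}
Substituting the explicit form of $[\;,\;]$ in terms of $D_1$ then produces Eq.~\meqref{eq:3Lie-CA}, up to a routine reorganization using commutativity of $\cdot$.

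There is no genuine obstacle here; the content of the corollary is essentially that the two commuting derivations can be used asymmetrically, with $D_1$ producing the binary \dualp bracket and $D_2$ serving as the derivation in the lift to 3-Lie algebras. The only step requiring care is the verification that $D_2$ respects $[\;,\;]$, which is the single place where the commutation $D_1D_2=D_2D_1$ is used; everything else is packaged into the earlier results.
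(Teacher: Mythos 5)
Your proposal is correct and follows essentially the same route as the paper: build the transposed Poisson algebra from $D_1$ via Corollary~\mref{co:dualpSG}, check that $D_2$ is a derivation of the resulting bracket (which the paper leaves as ``a direct checking'' and you spell out), and apply Theorem~\mref{thm:3Lie}.
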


\begin{proof}
By Corollary~\mref{co:dualpSG}, $(L,\cdot)$ together with the bracket defined by
$$ [x,y]:=x\mcdot D_1(y)-y\mcdot D_1(x), \quad \forall x, y\in L,$$
gives a \dualp algebra. A direct checking shows that $D_2$ is a derivation on $(L,[\;,\;])$. Thus by Theorem~\mref{thm:3Lie}, Eq.~\meqref{eq:3Lie-CA} gives a 3-Lie algebra structure on $L$.
\end{proof}

We can rewrite Eq.~(\mref{eq:3Lie-CA}) as
\begin{equation}[x,y,z]:=\det\left( \begin{matrix}  x & y & z\cr D_1(x) & D_1(y) & D_1(z)\cr D_2(x) & D_2(y) & D_2(z)\cr\end{matrix}\right)=({\rm id}\wedge D_1\wedge D_2)(x,y,z),\;\;\forall x,y,z\in L.\mlabel{eq:3-Liefrom}\end{equation}

In fact, the 3-Lie algebras given by Theorem~\mref{thm:3Lie} satisfies an additional equation.

\begin{pro}
Let $(L,\cdot, [\;,\;])$ be a \dualp
algebra. Suppose that $D$ is a derivation of $(L,\cdot)$ and
$(L,[\;,\;])$. Then for the 3-Lie algebra defined by Eq.~\meqref{eq:3-Lie}, the triple $(L,\cdot,[\;,\;,\;])$ in a {\bf \dualp 3-Lie algebra} in the sense that the following additional equation holds.
\begin{equation}
3u\mcdot [x,y,z]=[x\mcdot u,y,z]+[x,y\mcdot u,z]+[x,y,z\mcdot u], \;\;\forall ~x,y,z,u\in L.\mlabel{eq:dualp3Lie}
\end{equation}
\mlabel{pro:3Liepoi}
\end{pro}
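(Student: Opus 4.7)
The plan is to establish Eq.~\meqref{eq:dualp3Lie} by direct expansion of the right-hand side using the definition \meqref{eq:3-Lie} together with the Leibniz rule for $D$ on the commutative associative product, and then reorganizing the resulting brackets with the transposed Leibniz rule \meqref{eq:dualp} and the identity \meqref{eq:gi1}. Concretely, applying the definition of the ternary bracket, I first write
\[
[x\mcdot u,y,z] = D(x\mcdot u)\mcdot[y,z] + D(y)\mcdot[z,x\mcdot u] + D(z)\mcdot[x\mcdot u,y],
\]
and analogously for $[x,y\mcdot u,z]$ and $[x,y,z\mcdot u]$. The Leibniz rule gives $D(x\mcdot u)=D(x)\mcdot u+x\mcdot D(u)$, producing terms proportional to $D(u)$ and terms proportional to $D(x),D(y),D(z)$.

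Next, each ``mixed'' bracket of the form $[a,b\mcdot u]$ or $[a\mcdot u,b]$ is rewritten via \meqref{eq:dualp}, for example $[z,x\mcdot u]=2u\mcdot[z,x]-[u\mcdot z,x]$, and similarly for $[x\mcdot u,y]$, $[y\mcdot u,z]$, etc. After this substitution, I group all contributions by the derivative factor. The coefficient of $D(x)$ comes from three sources and, after using $[y,u\mcdot z]+[u\mcdot y,z]=2u\mcdot[y,z]$ to cancel the residual mixed brackets, collapses to $3D(x)\mcdot u\mcdot[y,z]$; by the symmetric cyclic structure, the coefficients of $D(y)$ and $D(z)$ collapse to $3D(y)\mcdot u\mcdot[z,x]$ and $3D(z)\mcdot u\mcdot[x,y]$ respectively. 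Summing these three gives precisely $3u\mcdot[x,y,z]$.

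The remaining $D(u)$-contributions assemble into
\[
D(u)\mcdot\bigl(x\mcdot[y,z]+y\mcdot[z,x]+z\mcdot[x,y]\bigr),
\]
which vanishes by the identity \meqref{eq:gi1}. This closes the computation and yields \meqref{eq:dualp3Lie}. The main obstacle is purely bookkeeping: one must keep clean track of the numerical coefficients (where the eventual $3$ arises as $5-2$, the $5$ coming from the Leibniz expansion of three separate ternary brackets and the $-2$ from a single application of the transposed Leibniz rule to the leftover mixed brackets) and of the signs introduced by skew-symmetry. No new structural input beyond Theorem~\mref{thm:3Lie}, the transposed Leibniz rule, and Eq.~\meqref{eq:gi1} is required.
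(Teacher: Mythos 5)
Your proposal is correct and follows essentially the same route as the paper's proof: expand each ternary bracket by the definition \meqref{eq:3-Lie}, apply the Leibniz rule of $D$, combine the pairs of mixed brackets via the transposed Leibniz rule \meqref{eq:dualp} to produce the coefficient $3$ (the paper gets $1+2$ in one step where you get $1+4-2$ in two, but this is only bookkeeping), and annihilate the $D(u)$-terms with \meqref{eq:gi1}. No substantive difference.
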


\begin{proof} Let $ x,y,z,u\in L$. Then we have
\begin{eqnarray*}
&&[x\mcdot u,y,z]+[x,y\mcdot u,z]+[x,y,z\mcdot u]\\
&&=D(x\mcdot u)\mcdot [y,z]+D(y)\mcdot [z,x\mcdot u]+D(z)\mcdot [x\mcdot u,y]+D(x)\mcdot [y\mcdot u,z]+D(y\mcdot u)\mcdot [z,x]\\
&&+D(z)\mcdot [x,y\mcdot u]++D(x)\mcdot [y,z\mcdot u]+D(y)\mcdot [z\mcdot u,x]+D(z\mcdot u)\mcdot[x,y]\\
&&=3u\mcdot (D(x)\mcdot [y,z]+D(y)\mcdot [z,x]+D(z)\mcdot [x,y])+D(u)\mcdot (x\mcdot [y,z]+y\mcdot [z,x]+z\mcdot [x,y])\\
&&=3u\mcdot [x,y,z].
\end{eqnarray*}
Hence the conclusion holds.
\end{proof}

In \mcite{Dz}, there is also a notion of a Poisson $n$-Lie algebra
which consists of a commutative associative algebra and an $n$-Lie
algebra, but the compatibility condition is that the adjoint
operators of the $n$-Lie algebra are the derivations of the
commutative associative algebra, thus generalizing the notions
of a Poisson algebra and a Poisson 3-Lie algebra.

For \dualp algebras, we introduce

\begin{defi}
Let $n\geq 2$ be an integer. A {\bf \dualp $n$-Lie algebra} is a triple $(L,\cdot,\mu)$ where $(L,\cdot)$ is a commutative associative algebra and $(L,\mu)$ is an $n$-Lie algebra satisfying the following condition.
$$ n w \mu(x_1,\cdots,x_n)=\sum_{i=1}^n \mu(x_1,\cdots,wx_i,\cdots,x_n), \quad \forall w, x_1,\cdots, x_n\in L.$$
\mlabel{de:dualpn}
\end{defi}

It is reasonable to expect that an \dualp $(n+1)$-algebra can be
obtained from an \dualp $n$-Lie algebra with a derivation,
generalizing Proposition~\mref{pro:3Liepoi}. To be precise, we propose

\begin{conj}
Let $n\geq 2$ be an integer. Let $(L,\cdot,\mu_n)$ be a \dualp $n$-Lie algebra and let $D$ be a derivation of $(L,\cdot)$ and $(L,\mu_n)$.
Define an
$(n+1)$-ary operation
$$\mu_{n+1}(x_1,\cdots,x_{n+1}):=\sum_{i=1}^{n+1} D(x_i) \mu_n(x_1,\cdots, \check{x}_i,\cdots,x_{n+1}),\quad \forall x_1,\cdots,x_{n+1}\in L,
$$
where $\check{x}_i$ means that the $i$-th entry is omitted. Then $(L,\cdot,\mu_{n+1})$ is a \dualp $(n+1)$-Lie algebra. \mlabel{cj:dualp}
\end{conj}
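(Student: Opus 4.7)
The plan is to verify in turn the three requirements of Definition~\mref{de:dualpn} at arity $n+1$: skew-symmetry of $\mu_{n+1}$, the Fundamental Identity, and the $(n+1)$-ary transposed Leibniz rule. The base case $n=2$ is the combination of Theorem~\mref{thm:3Lie} and Proposition~\mref{pro:3Liepoi}. Comparing the formula in the conjecture with Eq.~\meqref{eq:3-Lie2} at $n=2$, I read the defining sum with the standard alternating sign $(-1)^{i-1}$ in front of the $i$-th summand, so that $\mu_{n+1}(x_1,\dots,x_{n+1})=\sum_i(-1)^{i-1}D(x_i)\mu_n(x_1,\dots,\check x_i,\dots,x_{n+1})$; with this reading, skew-symmetry of $\mu_{n+1}$ in its arguments reduces immediately to that of $\mu_n$ by the usual sign count.

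Before attacking FI, I would establish $n$-ary analogs of the identities in Theorem~\mref{thm:id}. The crucial one is the generalization of Eq.~\meqref{eq:gi1}:
\[
\sum_{i=1}^{n+1}(-1)^{i-1}x_i\cdot \mu_n(x_1,\dots,\check x_i,\dots,x_{n+1})=0.
\]
To prove it, apply the defining relation of Definition~\mref{de:dualpn} with $w=x_j$ to rewrite $n\,x_j\mu_n(x_1,\dots,\check x_j,\dots,x_{n+1})$ as $\sum_{i\neq j}\mu_n(x_1,\dots,x_jx_i\text{ at slot }i,\dots,\check x_j,\dots,x_{n+1})$, then sum over $j$ with the sign $(-1)^{j-1}$: for each unordered pair $\{i,j\}$ with $i<j$ the two resulting $\mu_n$-terms involve the same multiset of arguments and agree after moving the slot carrying $x_i x_j$ from position $i$ to position $j-1$, contributing a sign $(-1)^{j-1-i}$, so the combined coefficient $(-1)^{j-1}+(-1)^{i-1}(-1)^{j-1-i}$ vanishes. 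Granted this, the $(n+1)$-ary transposed Leibniz rule follows exactly as in Proposition~\mref{pro:3Liepoi}: expand $\sum_k\mu_{n+1}(x_1,\dots,wx_k,\dots,x_{n+1})$; the diagonal $k=i$ terms, upon applying the Leibniz rule for $D$, contribute $w\,\mu_{n+1}$ plus a $D(w)$-multiple of the generalized \meqref{eq:gi1} (hence zero), while the off-diagonal $k\neq i$ terms contribute $n\,w\,\mu_{n+1}$ by the $n$-ary transposed Leibniz rule for $\mu_n$, summing to $(n+1)w\,\mu_{n+1}$.

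The main obstacle is the Fundamental Identity for $\mu_{n+1}$. Expanding both sides via the defining formula and the Leibniz rule of $D$ produces, as in the proof of Theorem~\mref{thm:3Lie}, three types of contributions: those with $D^2(x_i)$, those quadratic in $D$-images, and singly-$D$ terms involving nested $\mu_n$-brackets. My strategy is to bundle these into groups analogous to $(A_1),\dots,(A_8)$ in the proof of Theorem~\mref{thm:3Lie} and to show each group either vanishes outright or cancels against another, using (i) the FI of $\mu_n$ to unfold the nested brackets, and (ii) $n$-ary generalizations of Eqs.~\meqref{eq:gi2}--\meqref{eq:gi6}, which I expect to derive by iterating Definition~\mref{de:dualpn} together with the FI of $\mu_n$, in close analogy with the proof of Theorem~\mref{thm:id}. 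The real difficulty here is organizational rather than conceptual: the number of terms grows polynomially in $n$, and the sign bookkeeping through cascading substitutions $x_i\mapsto wx_i$ becomes delicate. An induction on $n$, using the conjecture at arity $n-1$ to produce the \dualp $n$-Lie identities needed at arity $n$, should help streamline this combinatorial step.
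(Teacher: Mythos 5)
The statement you are proving is stated in the paper only as Conjecture~\mref{cj:dualp}; the authors offer no proof, so there is no argument of theirs to compare yours against. That said, your proposal does settle several genuine points correctly. The observation that the defining sum must carry the alternating signs $(-1)^{i-1}$ in order to be skew-symmetric (and to reduce to Eq.~\meqref{eq:3-Lie2} at $n=2$) is right and is a real gap in the literal statement of the conjecture. Your derivation of the $(n+1)$-ary analogue of Eq.~\meqref{eq:gi1} from the defining relation of Definition~\mref{de:dualpn}, via the pairwise cancellation of coefficients $(-1)^{j-1}+(-1)^{i-1}(-1)^{j-1-i}=0$, is correct, and the deduction of the $(n+1)$-ary transposed Leibniz rule from it, mirroring Proposition~\mref{pro:3Liepoi}, goes through.

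However, the Fundamental Identity for $\mu_{n+1}$ --- the only genuinely hard part of the claim, and precisely the reason the statement is a conjecture rather than a theorem --- is not proved. You describe a strategy (group the expanded terms as in the proof of Theorem~\mref{thm:3Lie} and cancel using as-yet-unproved $n$-ary analogues of Eqs.~\meqref{eq:gi2}--\meqref{eq:gi6}), but neither those auxiliary identities nor the cancellation itself are carried out, and you acknowledge as much. Note that in the case $n=2$ the cancellation in Theorem~\mref{thm:3Lie} leans on rather specific consequences of the binary transposed Leibniz rule, such as Eq.~\meqref{eq:gi6} and the preliminary identity~\meqref{eq:identity}; Definition~\mref{de:dualpn} gives you only the $n$-ary transposed Leibniz rule and the FI of $\mu_n$ to work with, and it is not evident that these generate the higher-arity relations your plan requires. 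Until the FI is established (or a counterexample is produced), what you have is a plausible programme together with some correctly verified preliminary steps, not a proof.
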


This conjecture gives a natural interpretation of the scalar $2$ in the
compatibility condition of a \dualp algebra: it is simply
the arity of the operation of the Lie algebra.

\subsection{Another construction of 3-Lie algebras from \dualp algebras}

There is also a construction of 3-Lie algebras from \dualp algebras with certain linear transformations instead of the derivations.

\begin{thm}\mlabel{thm:const3}
Let $(L,\cdot,[\;,\;])$ be a \dualp algebra. Let $f$ be an endomorphism of $(L,\cdot)$ satisfying $f^2=\id$, that is $f$ is
an involutive endomorphism of $(L,\cdot)$ and
\begin{equation} f([x,y])=-[f(x),f(y)], \;\;\forall x,y\in L.\mlabel{eq:anti}
\end{equation}
Define a ternary operation on $L$ by
\begin{equation}\mlabel{eq:const3-de}
[x,y,z]:=f(x)[y,z]+f(y)[z,x]+f(z)[x,y],\;\;\forall x,y,z\in L.
\end{equation}
Then $(L,[\;,\;,\;])$ is a $3$-Lie algebra.
\end{thm}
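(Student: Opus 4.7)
The plan is to adapt the proof of Theorem~\ref{thm:3Lie}, replacing the derivation $D$ by the involutive endomorphism $f$. Skew-symmetry of $[\;,\;,\;]$ defined by~(\ref{eq:const3-de}) follows immediately from that of the Lie bracket, so the task reduces to verifying the Fundamental Identity~(\ref{eq:de13Lie}):
\[
[[x,y,z],u,v]=[[x,u,v],y,z]+[[y,u,v],z,x]+[[z,u,v],x,y].
\]

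The key preliminary computation is the analog of the auxiliary identity~(\ref{eq:identity}). Since $f$ is an algebra endomorphism with $f^2=\id$ and satisfies the sign-reversing condition~(\ref{eq:anti}), one obtains directly
\[
f([x,y,z]) = -x[f(y),f(z)] - y[f(z),f(x)] - z[f(x),f(y)].
\]
This is in fact cleaner than the derivation analog: the Leibniz rule in Theorem~\ref{thm:3Lie} split $D(f(x)[y,z])$ into a ``$D^{2}$'' piece and a ``$D\cdot D$'' piece, whereas here $f^{2}=\id$ collapses the second-order piece back to the original variable. With this identity in hand, I would expand both $[[x,y,z],u,v]$ and the cyclic sum on the right-hand side using definition~(\ref{eq:const3-de}) and sort the resulting monomials into groups mirroring the families $(A_{1}),\ldots,(A_{8})$ of Theorem~\ref{thm:3Lie}; the families involving $D^{2}$ are now absent, and the surviving ones contain $f$ at most once per factor.

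The cancellations then invoke the identities of Theorem~\ref{thm:id}, each with a mild $f$-twist: Eq.~(\ref{eq:gi1}) kills the cyclic sum of $(A_{3})$-type in $x,y,z$; Eq.~(\ref{eq:gi4}), combined with~(\ref{eq:anti}) to trade $[f(a),f(b)]$ for $-f([a,b])$, handles the $(A_{2})$-type sums in $u,v$; Eq.~(\ref{eq:gi2}) rearranges nested brackets of the form $[f(u)[v,\cdot],\cdot]$ into $[v,f(u)[\cdot,\cdot]]$ matching the left-hand side; and Eq.~(\ref{eq:gi6}), applied with $x\mapsto f(x)$ and $v\mapsto f(v)$, supplies the analog of the auxiliary identity~(\ref{eq:imp}) that pairs the $(A_{4})+(A_{7})$ and $(A_{5})+(A_{6})$ families against $(A_{8})$. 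The main obstacle I anticipate is the bookkeeping: cyclic sums such as $\sum_{\text{cyc}}[f(v),f(x_{i})]\cdot[y_{i},z_{i}]$ do not coincide verbatim with an instance of Theorem~\ref{thm:id}, since only some of the arguments sit inside $f$, so each application must first normalize the $[f(\cdot),f(\cdot)]$ factors via~(\ref{eq:anti}) or else invoke the relevant identity on the formal variables $f(v),f(x_{i}),y_{i},z_{i}$ in $L$. Once this accounting is complete, the groups reorganize into $[[x,y,z],u,v]$ exactly as in the proof of Theorem~\ref{thm:3Lie}, establishing~(\ref{eq:de13Lie}).
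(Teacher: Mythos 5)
Your proposal is correct and follows essentially the same route as the paper: both expand the Fundamental Identity via the definition \eqref{eq:const3-de}, exploit that $f^2=\id$ together with \eqref{eq:anti} collapses $f([x,y,z])$ into $-x[f(y),f(z)]-y[f(z),f(x)]-z[f(x),f(y)]$, and then cancel the resulting monomials using the identities of Theorem~\ref{thm:id}. The only difference is organizational: the paper sorts all terms into ten families that each vanish individually using just \eqref{eq:gi1}, \eqref{eq:gi2} and the defining relation \eqref{eq:dualp}, so \eqref{eq:gi4} and \eqref{eq:gi6} are never invoked --- in particular the $(A_2)$-type sums you plan to kill with \eqref{eq:gi4} do not arise here, because the would-be second-order contributions are absorbed by $f^2=\id$ into the families your \eqref{eq:gi2}/\eqref{eq:imp}-style cancellations already cover.
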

Alternatively, in replacing $f$ by $-f$, we can also assume that
$f$ is an involutive endomorphism of $(L,[\;,\;])$ and  satisfies $f(x\mcdot y)=-f(x)\mcdot f(y)$.

\begin{proof} Let $x,y,z,u,v\in L$. Applying Eqs.~\meqref{eq:anti} and \meqref{eq:const3-de}, we obtain
$$[[x,u,v],y,z]+[[y,u,v],z,x]+[[z,u,v],x,y]-[[x,y,z],u,v]=\sum_{i=1}^{10} (A_i),$$
where
\begin{eqnarray*}
(A_1)&:=& -x\mcdot [f(u),f(v)]\mcdot [y,z] -y\mcdot [f(u),f(v)]\mcdot [z,x]-z\mcdot [f(u),f(v)]\mcdot [x,y],\\
(A_2)&:=& x\mcdot [f(y),f(z)]\mcdot [u,v]+f(y)\mcdot [f(z)\mcdot [u,v],x]+f(z)\mcdot [x,f(y)\mcdot [u,v]],\\
(A_3)&:=& y\mcdot [f(z),f(x)]\mcdot [u,v]+f(z)\mcdot [f(x)\mcdot [u,v],y]+f(x)\mcdot [y,f(z)\mcdot [u,v]],\\
(A_4)&:=& z\mcdot [f(x),f(y)]\mcdot [u,v]+f(x)\mcdot [f(y)\mcdot [u,v],z]+f(y)\mcdot [z,f(x)\mcdot [u,v]],\\
(A_5)&:=& -f(u)\mcdot [v,f(x)\mcdot [y,z]]-v\mcdot[f(x),f(u)]\mcdot [y,z]+f(x)\mcdot [f(u)[v,y],z]\\
&&+f(x)\mcdot [y,f(u)\mcdot [v,z]],\\
(A_6)&:=& -f(u)\mcdot [v,f(y)\mcdot [z,x]]-v\mcdot[f(y),f(u)]\mcdot [z,x]+f(y)\mcdot [f(u)[v,z],x]\\
&&+f(y)\mcdot [z,f(u)\mcdot [v,x]],\\
(A_7)&:=& -f(u)\mcdot [v,f(z)\mcdot [x,y]]-v\mcdot[f(z),f(u)]\mcdot [x,y]+f(z)\mcdot [f(u)[v,x],y]\\
&&+f(z)\mcdot [x,f(u)\mcdot [v,y]],\\
(A_8)&:=& f(v)\mcdot [f(x)\mcdot [y,z],u]-u\mcdot[f(v),f(x)]\mcdot [y,z]+f(x)\mcdot [f(v)[y,u],z]\\
&&+f(x)\mcdot [y, f(v)\mcdot [z,u]],\\
(A_9)&:=& f(v)\mcdot [f(y)\mcdot [z,x],u]-u\mcdot[f(v),f(y)]\mcdot [z,x]+f(y)\mcdot [f(v)[z,u],x]\\
&&+f(y)\mcdot [z, f(v)\mcdot [x,u]],\\
(A_{10})&:=& f(v)\mcdot [f(z)\mcdot [x,y],u]-u\mcdot[f(v),f(z)]\mcdot [x,y]+f(z)\mcdot [f(v)[x,u],y]\\
&&+f(z)\mcdot [x, f(v)\mcdot [y,u]].
\end{eqnarray*}

By Eq.~(\mref{eq:gi1}), we have $(A_1)=0$.

By Eqs.~(\mref{eq:dualp}) and ~(\mref{eq:gi1}), we have
\begin{eqnarray*}
x\mcdot [f(y),f(z)]\mcdot [u,v]&=& x\mcdot ([f(y)\mcdot [u,v],f(z)]+[f(y),f(z)\mcdot [u,v]])\\&&+(f(y)\mcdot [f(z),x]+f(z)\mcdot [x,f(y)])\mcdot [u,v].
\end{eqnarray*}
By Eq.~(\mref{eq:gi1}) again, we have
\begin{eqnarray*}
(A_2) &=& f(z)\mcdot [x,f(y)\mcdot [u,v]]+f(y)\mcdot [f(z)\mcdot [u,v],x]+ x\mcdot ([f(y)\mcdot [u,v],f(z)]\\
&&+[f(y),f(z)\mcdot [u,v]])+(f(y)\mcdot [f(z),x]+f(z)\mcdot [x,f(y)])\mcdot [u,v]\\
&=&0.
\end{eqnarray*}
Similarly, we have $(A_3)=(A_4)=0$.

By Eq.~(\mref{eq:gi2}), we have
\begin{eqnarray*}
(A_5)&=& -f(u)\mcdot [v,f(x)\mcdot [y,z]]-v\mcdot[f(x),f(u)]\mcdot [y,z]-f(x)\mcdot [f(u)\mcdot [y,z],v].
\end{eqnarray*}
Therefore due to the identity $(A_2)=0$, we show that $(A_5)=0$. Similarly, $(A_6)=(A_7)=(A_8)=(A_9)=(A_{10})=0$.
Hence $(L,[\;,\;,\;])$ is a $3$-Lie algebra.
\end{proof}

\begin{rmk}
In general, under the same condition in Theorem~\mref{thm:const3},
$(L,\cdot,[\;,\;,\;])$ is not a \dualp 3-Lie algebra. In fact, it is a \dualp 3-Lie algebra if and only $f$ satisfies an additional condition
$$(f(u)-u)\mcdot(f(x)\mcdot [y,z]+f(y)\mcdot [z,x]+f(z)\mcdot [x,y])=0,\;\;\forall u,x,y,z\in L.$$
\end{rmk}

\begin{cor}
Let $(L,\cdot)$ be a commutative associative algebra and $D$ be
derivation. Let $(L,[\;,\;])$ be the Lie algebra defined by
Eq.~(\mref{eq:SG}). If $f$ is an involutive endomorphism and
$Df=-fD$, then Eq.~(\mref{eq:anti}) holds. Hence there is a 3-Lie
algebra $(L,[\;,\;,\;])$ defined by
Eq.~(\mref{eq:const3-de}). Moreover,
\begin{eqnarray*}
[x,y,z]&=&f(x)\mcdot [y,z]+f(y)\mcdot [z,x]+f(x)\mcdot [y,z]\\
&=&\det \left(\begin{matrix} D(x) &D(y) &D(z)\cr f(x) &f(y) &f(z)\cr x&y&z\cr\end{matrix}\right)\\
&=&(D\wedge f\wedge {\rm Id})(x,y,z),\quad \forall x,y,z\in L.
\end{eqnarray*}
\end{cor}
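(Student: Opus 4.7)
The plan is to reduce this corollary to Theorem~\ref{thm:const3} by verifying its two hypotheses and then expanding the resulting ternary bracket to recognize the determinantal form.

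First I would recall that, by Corollary~\ref{co:dualpSG}, the bracket $[x,y]=x\cdot D(y)-y\cdot D(x)$ equips $(L,\cdot)$ with the structure of a \dualp algebra. Theorem~\ref{thm:const3} then applies as soon as one checks the compatibility $f([x,y])=-[f(x),f(y)]$ of Eq.~\eqref{eq:anti}. This is a short direct calculation: since $f$ is an endomorphism of $(L,\cdot)$ and $Df=-fD$,
\begin{align*}
f([x,y]) &= f\bigl(x\cdot D(y)-y\cdot D(x)\bigr) = f(x)\cdot f(D(y))-f(y)\cdot f(D(x))\\
&= -f(x)\cdot D(f(y))+f(y)\cdot D(f(x)) = -[f(x),f(y)].
\end{align*}
With Eq.~\eqref{eq:anti} in hand, Theorem~\ref{thm:const3} directly yields that the ternary operation defined by Eq.~\eqref{eq:const3-de} makes $(L,[\;,\;,\;])$ a $3$-Lie algebra.

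Next I would substitute the explicit formula $[y,z]=y\cdot D(z)-z\cdot D(y)$ into Eq.~\eqref{eq:const3-de}, obtaining
\[
[x,y,z]=f(x)\bigl(y\cdot D(z)-z\cdot D(y)\bigr)+f(y)\bigl(z\cdot D(x)-x\cdot D(z)\bigr)+f(z)\bigl(x\cdot D(y)-y\cdot D(x)\bigr),
\]
and observe (using commutativity of $\cdot$) that this is exactly the cofactor expansion along the middle row of the determinant
\[
\det\!\left(\begin{matrix} D(x) & D(y) & D(z)\\ f(x) & f(y) & f(z)\\ x & y & z\end{matrix}\right),
\]
which is by definition $(D\wedge f\wedge \mathrm{Id})(x,y,z)$.

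There is essentially no obstacle: the only nontrivial step is the verification of Eq.~\eqref{eq:anti}, and even that is a two-line calculation relying only on $f$ being a homomorphism of the commutative associative product and on the anticommutation $Df=-fD$. Everything else is either an appeal to previously established results (Corollary~\ref{co:dualpSG} and Theorem~\ref{thm:const3}) or an elementary determinant expansion.
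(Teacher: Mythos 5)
Your proposal is correct and follows essentially the same route as the paper: verify Eq.~\eqref{eq:anti} by the two-line computation using $f(x\cdot y)=f(x)\cdot f(y)$ and $Df=-fD$, invoke Theorem~\ref{thm:const3} (with the \dualp structure supplied by Corollary~\ref{co:dualpSG}), and then identify the ternary bracket with the determinant. The paper dismisses the determinant identification as immediate, whereas you spell out the cofactor expansion along the middle row; that is a harmless elaboration, and your signs check out.
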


\begin{proof} For all $x,y\in L$, we have
$$f([x,y])=f(x\mcdot D(y)-y\mcdot D(x))=f(x)\mcdot fD(y)-f(y)\mcdot fD(x)=-[f(x),f(y)].$$
Hence by Theorem~\mref{thm:const3}, $(L,[\;,\;,\;])$ defined by Eq.~(\mref{eq:const3-de}) is a 3-Lie algebra.
The other conclusion follows immediately.
\end{proof}

This construction of 3-Lie
algebras recovers \cite[Theorem
3.3]{BW} which was obtained by a direct verification.

\subsection{From Poisson algebras to \dualp 3-Lie algebras}
\mlabel{ss:poisto3dualp}

In addition to Proposition~\mref{pro:cons1}, there is another construction of 3-Lie algebras from Poisson algebras.
\begin{pro}{\rm \mcite{Dz}}
Let $(L,\cdot, [\;,\;])$ be a strong Poisson algebra.
Define
\begin{equation}
[x,y,z]=x\mcdot [y,z]+y\mcdot [z,x]+z\mcdot [x,y],\;\;\forall
x,y,z\in L.\mlabel{eq:Poi-id}
\end{equation}
Then $(L,[\;,\;,\;])$ is a 3-Lie algebra satisfying
Eq.~\meqref{eq:strong-3}. \mlabel{it:cons2}
\mlabel{pro:cons2}
\end{pro}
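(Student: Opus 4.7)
The plan is to verify two things: the Fundamental Identity (FI) for the ternary bracket defined by Eq.~\eqref{eq:Poi-id}, and the ternary identity~\eqref{eq:strong-3}. Both verifications will rest on three ingredients: the Poisson Leibniz rule~\eqref{eq:LR}, the Jacobi identity for $[\,,\,]$, and the strong Poisson condition~\eqref{eq:strong}. Note that, in contrast to the transposed Poisson case, the identity $x[y,z]+y[z,x]+z[x,y]=0$ does \emph{not} hold here, so the ternary bracket is genuinely nontrivial and no easy shortcut via~\eqref{eq:gi1} is available.

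For the FI, I would first expand $[[x,y,z],u,v]$ by substituting Eq.~\eqref{eq:Poi-id} twice. The outer application produces a sum of three commutative-bracket products; the Leibniz rule then distributes each outer bracket across the inner products. After full expansion, every monomial falls into one of three shapes: type~(a) $a\mcdot b\mcdot[c,d]$, type~(b) $a\mcdot[b,[c,d]]$ containing a nested bracket, and type~(c) $a\mcdot[b,c]\mcdot[d,e]$ of \emph{split} form. The same expansion is performed on each of the three summands $[[x,u,v],y,z]+[[y,u,v],z,x]+[[z,u,v],x,y]$, and terms are grouped by shape. The cyclic $\mathbb{Z}/3$-symmetry in $(x,y,z)$, present on both sides, cuts the verification work by a factor of three.

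Next I would match terms across the equality by shape. Terms of shape~(a) correspond directly after a reapplication of Leibniz. Terms of shape~(b) are simplified by the Jacobi identity, converting nested brackets back into sums of shape~(a) and shape~(c); after this step the shape-(b) balance holds. What remains is a residual collection of shape-(c) terms which does \emph{not} cancel from the Poisson axioms alone; this is exactly where the strongness condition~\eqref{eq:strong} enters, cycled over appropriate triples of variables and multiplied through by the surviving scalar factors. The main obstacle is bookkeeping rather than conceptual content: after Jacobi-collapse one is left with on the order of thirty shape-(c) monomials to pair off, each eliminated by one instance of~\eqref{eq:strong}. The conceptual takeaway is that Leibniz and Jacobi reduce the FI modulo a strongness defect, and strongness kills precisely that defect.

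For the ternary identity~\eqref{eq:strong-3} I would substitute Eq.~\eqref{eq:Poi-id} into each $[x,y,u_i]$ and each $[u_j,u_k,u_l]$, obtaining a sum of products of binary brackets times commutative factors. The alternating signs in~\eqref{eq:strong-3} are arranged so that the resulting monomials group into quadruples, each of which vanishes by one application of~\eqref{eq:strong}, the ``extra'' factor $h$ being supplied by one of the remaining $x,y,u_i$ variables. I expect this second verification to be shorter than the FI, since no Jacobi manipulation is needed; only Leibniz and~\eqref{eq:strong} are used. If time permits I would also try to derive~\eqref{eq:strong-3} as a formal consequence of the FI together with~\eqref{eq:strong}, which would give a cleaner argument than direct expansion.
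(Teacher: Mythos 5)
The paper does not actually prove this proposition --- it is quoted from \cite{Dz} and stated without argument --- so there is no in-paper proof to measure yours against; I am therefore assessing the plan on its own terms. The skeleton is sound and the verification does close along the lines you describe, but three pieces of your bookkeeping need correcting before a write-up would go through. First, after expanding both sides of the Fundamental Identity \eqref{eq:de13Lie} via \eqref{eq:Poi-id} and the Leibniz rule \eqref{eq:LR}, every monomial carries exactly two binary brackets, either split, $a[b,c][d,e]$, or nested, $ab[c,[d,e]]$; your shape~(a) terms $ab[c,d]$ never occur, and the Jacobi identity does not turn nested brackets into split ones --- it only rearranges them, after which the nested terms on the two sides of \eqref{eq:de13Lie} cancel against each other exactly, with no strongness input. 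Second, the genuine residue is entirely of split type: it amounts to $2\bigl(x[y,z]+y[z,x]+z[x,y]\bigr)[u,v]$ plus twelve further split monomials, and grouping those twelve by their scalar factor $x$, $y$ or $z$ and applying \eqref{eq:strong} with $h=u$ once per group collapses them to $-2\bigl(x[y,z]+y[z,x]+z[x,y]\bigr)[u,v]$, which closes the identity; so the residue is handled by three instances of \eqref{eq:strong}, not ``on the order of thirty.'' Third, in verifying \eqref{eq:strong-3}, of the $36$ monomials obtained by expansion the twelve containing the factor $[x,y]$ cancel identically by skew-symmetry of the binary bracket alone, and the remaining ones organize into \emph{triples} --- each a single instance of the three-term relation \eqref{eq:strong} with $h=x$ or $h=y$ --- not quadruples; no Leibniz manipulation occurs there since no bracket of a product appears. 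Your observation that the shortcut \eqref{eq:gi1} is unavailable for Poisson algebras, so that no easy cancellation is possible, is correct and worth retaining.
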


Let $(L,\cdot,[\;,\;])$ be a Poisson algebra. Suppose that the ternary operation $[\;,\;,\;]$ defined by Eq.~\meqref{eq:Poi-id} gives a 3-Lie algebra $(L,[\;,\;,\;])$. A natural question is whether the triple $(L,\cdot,[\;,\;,\;])$ is a Poisson 3-Lie algebra.
We show that the triple is in fact a \dualp 3-Lie algebra. Indeed, as we will see in Remark~\mref{rmk:trivial-3}, the triple is a Poisson 3-Lie algebra only under strict conditions.

\begin{thm} Let $(L,\cdot,[\;,\;])$ be a Poisson algebra. If Eq.~\meqref{eq:Poi-id} defines a 3-Lie algebra $(L,[\;,\;,\;])$,
then  $(L,\cdot, [\;,\;,\;])$ is a \dualp 3-Lie algebra. In particular, if $(L,\cdot, [\;,\;])$ is a strong Poisson algebra, then the 3-Lie algebra $(L,[\;,\;,\;])$ obtained in Proposition~\mref{pro:cons2} gives a \dualp 3-Lie algebra $(L,\cdot, [\;,\;,\;])$.
\mlabel{thm:Poi-dualp}
\end{thm}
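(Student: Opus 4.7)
The plan is to verify Eq.~\meqref{eq:dualp3Lie} directly. Since the commutative associative structure and the 3-Lie structure (hypothesis) are both in place, the only thing to check is the compatibility
\[
3u\cdot [x,y,z]=[x\cdot u,y,z]+[x,y\cdot u,z]+[x,y,z\cdot u], \quad \forall x,y,z,u\in L.
\]
Substituting Eq.~\meqref{eq:Poi-id} into each of the three terms on the right, I would first separate out the contributions where the factor $u$ appears ``outside'' a bracket from those where it appears inside. The outside contribution is
\[
(x\cdot u)\cdot[y,z]+(y\cdot u)\cdot[z,x]+(z\cdot u)\cdot[x,y]=u\cdot\bigl(x\cdot[y,z]+y\cdot[z,x]+z\cdot[x,y]\bigr)=u\cdot[x,y,z],
\]
using only commutativity and associativity of $\cdot$.

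The remaining six terms each involve a bracket of the form $[\,\cdot\,,\,\cdot u\,]$ or $[\,\cdot u,\,\cdot\,]$, to which I would apply the Poisson Leibniz rule \meqref{eq:LR}. Grouped by the outer factor, the six terms become
\[
y\cdot\bigl([z,x\cdot u]+[z\cdot u,x]\bigr)+x\cdot\bigl([y\cdot u,z]+[y,z\cdot u]\bigr)+z\cdot\bigl([x,y\cdot u]+[x\cdot u,y]\bigr).
\]
Using \meqref{eq:LR} in each inner sum, the terms of the form $2[\,\cdot\,,\,\cdot\,]\cdot u$ give precisely $2u\cdot[x,y,z]$, while the remaining six ``cross'' terms cancel pairwise by commutativity of $\cdot$ (for instance, $y\cdot x\cdot[z,u]$ from the first group cancels $-x\cdot y\cdot[z,u]$ from the second).

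Adding the two contributions yields $u\cdot[x,y,z]+2u\cdot[x,y,z]=3u\cdot[x,y,z]$, as required. The second sentence of the statement is then immediate: if $(L,\cdot,[\;,\;])$ is strong Poisson, Proposition~\mref{pro:cons2} guarantees that $(L,[\;,\;,\;])$ is a 3-Lie algebra, so the hypothesis of the first sentence is satisfied.

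I do not expect any serious obstacle: the computation uses only the Poisson Leibniz rule \meqref{eq:LR} together with commutativity and associativity, and the 3-Lie hypothesis on $[\;,\;,\;]$ is not needed for this particular identity (it is only needed to ensure that $(L,[\;,\;,\;])$ is a 3-Lie algebra in the first place). The mild bookkeeping challenge is to keep the nine expanded terms organized so that the six ``cross'' terms visibly cancel; I would lay them out in a tabulated display to make the cancellations transparent.
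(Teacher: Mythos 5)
Your computation is correct and is essentially the paper's own proof: both expand the three brackets $[x\cdot u,y,z]$, $[x,y\cdot u,z]$, $[x,y,z\cdot u]$ via Eq.~\eqref{eq:Poi-id} and the Leibniz rule \eqref{eq:LR}, collect the terms giving $3u\cdot[x,y,z]$, and observe that the six cross terms cancel in pairs by antisymmetry of the bracket. Your bookkeeping (splitting off the ``outer $u$'' contribution first, then grouping by outer factor) is a minor reorganization of the same calculation, and your remarks about the role of the 3-Lie hypothesis and the strong Poisson case are accurate.
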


\begin{proof} Let $u,x,y,z\in L$. Then we have
\begin{eqnarray*}
&&[u\mcdot x,y,z]=u\mcdot x\mcdot [y,z]+y\mcdot [z,u\mcdot x]+z\mcdot
[u\mcdot x,y]\\&&\mbox{}\hspace{1.8cm}= u\mcdot x\mcdot [y,z]+y\mcdot
u\mcdot [z,x]+y\mcdot x\mcdot [z,u]+z\mcdot u\mcdot [x,y]+z\mcdot x\mcdot
[u,y],\\&& [x,u\mcdot y,z]=x\mcdot u\mcdot [y,z]+x\mcdot y\mcdot
[u,z]+u\mcdot
y\mcdot [z,x]+z\mcdot u\mcdot [x,y]+z\mcdot y\mcdot [x,u],\\
&&[x,y,u\mcdot z]=x\mcdot u\mcdot [y,z]+x\mcdot z\mcdot [y,u]+y\mcdot
u\mcdot [z,x]+y\mcdot z\mcdot [u,x]+u\mcdot z\mcdot [x,y].
\end{eqnarray*}
Taking the sum of the three identities above, we get
\begin{eqnarray*}
[u\mcdot x,y,z]+[x,u\mcdot y,z]+[x,y,u\mcdot z]&=&3u\mcdot (x\mcdot
[y,z]+y\mcdot [z,x]+z\mcdot [x,y])\\
&=&3u\mcdot [x,y,z].
\end{eqnarray*}
Hence $(L,\cdot, [\;,\;,\;])$ is a \dualp 3-Lie algebra.
\end{proof}

\begin{ex}\mlabel{ex:Dz}
Here is an interesting instance where we can compare the construction of a 3-Lie algebra from a strong Poisson algebra by Proposition~\mref{pro:cons2} (and Theorem~\mref{thm:Poi-dualp}) with the construction of a 3-Lie algebra from a \dualp algebra by Corollary~\mref{co:3Lie-CA}.

Let $(L,\cdot)$ be a commutative associative algebra and $D_1,D_2$
be two commutating derivations. On the one hand, the derivations
$D_1,D_2$ on $(L,\cdot)$ gives a Lie algebra $(L,[\;,\;])$ by
Eq.~\meqref{eq:2der} in Example~\mref{ex:Poi}, turning
$(L,\cdot,[\;,\;])$ into a strong Poisson algebra (see the remark
after Definition~\mref{de:strongP}), which in turn gives rise to
the 3-Lie algebra in Eq.~(\mref{eq:Poi-id}). On the other hand,
the derivation $D_1$ on $(L,\cdot)$ gives a \dualp algebra
$(L,\cdot,[\;,\;])$ by Corollary~\mref{co:dualpSG}, which then
gives rise to a \dualp 3-Lie algebra by
Eq.~(\mref{eq:3-Liefrom}). The two constructions of 3-Lie
algebras coincide since in either case, the 3-Lie algebra is
defined by
$$[x,y,z]={\rm Id}\wedge (D_1\wedge D_2)(x,y,z)=({\rm Id}\wedge D_1)\wedge D_2(x,y,z),\;\;\forall x,y,z\in L.$$
\end{ex}

\begin{rmk} Let $(L,\cdot,[\;,])$ be a Poisson algebra. If the strongness condition Eq.~(\mref{eq:gi4}) is replaced by
Eq.~(\mref{eq:gi2}), that is, the following equation holds,
\begin{equation}
[h\mcdot [x,y],z]+[h\mcdot [y,z],x]+[h\mcdot [z,x],y]=0, \;\;\forall
h,x,y,z\in L,
\end{equation}
then Eq.~(\mref{eq:Poi-id}) still defines a 3-Lie algebra by a straightforward proof.

Moreover,  in this case, the triple $(L,\cdot, [\;,\;,\;])$ is also a
\dualp 3-Lie algebra. One can show that for a Poisson  algebra,
Eq.~(\mref{eq:gi2}) follows from  Eq.~(\mref{eq:gi4}). Note that
for a \dualp algebra, both Eqs.~(\mref{eq:gi2}) and
(\mref{eq:gi4}) hold automatically.
\end{rmk}

Similar to Proposition~\mref{pp:inter0}, we have

\begin{pro}\mlabel{pro:trivial-3}
 Let $(L,\cdot)$ be a commutative associative algebra and $(L,[\;,\;,\;])$ be a 3-Lie algebra. Then $(L,\cdot,[\;,\;,\;])$ is both a Poisson 3-Lie
and a \dualp 3-Lie algebra if and only if
\begin{equation}
u\mcdot [x,y,z]=[u\mcdot x,y,z]=0,\;\;\forall u,x,y,z\in L.
\mlabel{eq:mix3}
\end{equation}
\end{pro}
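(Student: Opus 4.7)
The ``if'' direction is immediate: under the hypothesis that $u\cdot[x,y,z]=[u\cdot x,y,z]=0$, both sides of the Poisson 3-Lie compatibility Eq.~\meqref{eq:3LiePoi} and of the \dualp 3-Lie compatibility Eq.~\meqref{eq:dualp3Lie} are zero, so both identities hold trivially.

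For the ``only if'' part I would mirror the strategy of Proposition~\mref{pp:inter0}. First, applying the Poisson 3-Lie rule together with the skew-symmetry of the 3-Lie bracket (so that the derivation property holds in every slot via cyclic rotation, which is an even permutation) yields
\begin{align*}
[u\mcdot x,y,z]&=u\mcdot[x,y,z]+[u,y,z]\mcdot x,\\
[x,u\mcdot y,z]&=u\mcdot[x,y,z]+[u,z,x]\mcdot y,\\
[x,y,u\mcdot z]&=u\mcdot[x,y,z]+[u,x,y]\mcdot z.
\end{align*}
Summing the three identities and equating with $3u\mcdot[x,y,z]$ by the \dualp 3-Lie condition gives the key relation
\begin{equation}
[u,y,z]\mcdot x+[u,z,x]\mcdot y+[u,x,y]\mcdot z=0, \tag{$\ast$}
\end{equation}
which is the natural 3-ary analog of Eq.~\meqref{eq:gi1}.

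Next, setting $u=x$ in $(\ast)$ kills the last two terms by skew-symmetry and leaves $x\mcdot[x,y,z]=0$. A standard polarization $x\mapsto x+u$ then produces the ``outside/first-slot'' antisymmetry
\begin{equation}
u\mcdot[v,y,z]+v\mcdot[u,y,z]=0. \tag{$\diamond$}
\end{equation}
The decisive observation is that $(\diamond)$, combined with the cyclic invariance $[y,z,x]=[z,x,y]=[x,y,z]$ of the 3-Lie bracket, turns each of the three summands of $(\ast)$ into the same expression $-u\mcdot[x,y,z]$. Therefore $(\ast)$ collapses to $-3u\mcdot[x,y,z]=0$, and (since $\bfk$ has characteristic zero) we conclude $u\mcdot[x,y,z]=0$. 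The second half $[u\mcdot x,y,z]=0$ then follows by substituting this back into the Poisson 3-Lie expansion of $[u\mcdot x,y,z]$ from the first step, together with $(\diamond)$ to kill the remaining term.

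I expect the only delicate point to be step three, namely verifying that all three terms of $(\ast)$ really collapse to $-u\mcdot[x,y,z]$ after applying $(\diamond)$; this requires a careful bookkeeping of the cyclic rotations of the 3-Lie bracket so that the ``fixed'' arguments $y,z$ in $(\diamond)$ are correctly matched in each of the three summands. Every other step is a direct translation of the analogous steps in the proof of Proposition~\mref{pp:inter0}.
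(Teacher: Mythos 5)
Your proof is correct. The first half coincides with the paper's argument: you expand $[u\cdot x,y,z]+[x,u\cdot y,z]+[x,y,u\cdot z]$ via the Poisson 3-Lie rule \meqref{eq:3LiePoi} (using total skew-symmetry to move the product into the third slot), compare with the transposed rule \meqref{eq:dualp3Lie}, and obtain the cyclic identity $x\cdot[u,y,z]+y\cdot[u,z,x]+z\cdot[u,x,y]=0$, exactly as in the paper. Where you diverge is in extracting $u\cdot[x,y,z]=0$ from this identity. The paper substitutes \meqref{eq:dualp3Lie} once more: tripling the cyclic identity and expanding each term by the transposed rule, the six cross terms cancel in pairs by skew-symmetry of the 3-Lie bracket, leaving $[u\cdot x,y,z]+[x,u\cdot y,z]+[x,y,u\cdot z]=0$, whose left side is $3u\cdot[x,y,z]$ by \meqref{eq:dualp3Lie} again. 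You instead specialize $u=x$ to get $x\cdot[x,y,z]=0$, polarize to obtain the antisymmetry $u\cdot[v,y,z]+v\cdot[u,y,z]=0$, and use it to collapse each of the three summands to $-u\cdot[x,y,z]$. Both routes are valid and of comparable length; yours trades a second application of the transposed rule for a specialization-and-polarization step, which is perhaps closer in spirit to the binary case in Proposition~\mref{pp:inter0}. The ``delicate point'' you flag in your step three is in fact harmless: your relation $(\diamond)$ was derived for arbitrary $y,z$, so after renaming it applies verbatim to each summand, and the three cyclic rotations all reduce to $-u\cdot[x,y,z]$ as you expect. The deduction of $[u\cdot x,y,z]=0$ at the end and the ``if'' direction are also fine.
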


\begin{proof}
($\Longrightarrow$) Let $u,x,y,z\in L$. Substituting
Eq.~(\mref{eq:3LiePoi}) into Eq.~(\mref{eq:dualp3Lie}), we have
$$x\mcdot [u,y,z]+y\mcdot [u,z,x]+z\mcdot [u,x,y]=0.$$
Substituting Eq.~(\mref{eq:dualp3Lie}) again into the above
equation, we have
$$[u\mcdot x, y,z]+[x,u\mcdot y,z]+[x,y,u\mcdot z]=0.$$
The left hand side of the above is exactly $3 u\mcdot [x,y,z]$.
Therefore we have $u\mcdot [x,y,z]=0$. Hence $[u\mcdot x,y,z]=0$.

($\Longleftarrow$) It is obvious.
\end{proof}

Putting Theorem~\mref{thm:Poi-dualp} and Proposition~\mref{pro:trivial-3} together, we obtain

\begin{rmk}
Let $(L,\cdot,[\;,\;])$ be a Poisson algebra and let  $[\;,\;,\;]$
be the ternary operation defined in Eq.~\meqref{eq:Poi-id}. Then
the triple $(L,\cdot,[\;,\;,\;])$ is a \dualp algebra by
Theorem~\mref{thm:Poi-dualp}. If the triple is also a Poisson
3-Lie algebra, then according to Proposition~\mref{pro:trivial-3},
it is necessary that Eq.~\meqref{eq:mix3} holds, that is, all the
mixed products of $\cdot$ and $[\;,\;,\;]$
are trivial. In particular, if $(L,\cdot, [\;,\;])$ is a strong
Poisson algebra, then the 3-Lie algebra $(L,[\;,\;,\;])$ defined
by Eq.~\meqref{eq:Poi-id} gives rise to a Poisson 3-Lie algebra $(L,\cdot,[\;,\;,\;])$ only if Eq.~\meqref{eq:mix3} holds.
\mlabel{rmk:trivial-3}
\end{rmk}

\medskip

\noindent
{\bf Acknowledgements.}  This work is supported by
 National Natural Science Foundation of China (Grant Nos. 11771190 and 11931009).   C. Bai is also
supported by the Fundamental Research Funds for the Central
Universities and Nankai ZhiDe Foundation.

\end{document}